\documentclass[reqno,11pt]{amsart}

%%%%%%%%%%%%%%%%%%%%%%%%%%%%%%%%%%%%%%%%%%%%%% PACKAGES %%%%%%%%%%%%%%%%%%%%%%%%%%%%%%%%%%%%%%%%%%%%%%%%%%%%%%%%%

\usepackage[a4paper,left=23mm,right=23mm,top=30mm,bottom=30mm,marginpar=25mm]{geometry}
\usepackage{amsmath}
\usepackage{amssymb}
\usepackage{amsthm}
\usepackage{amscd}
\usepackage{mathtools}
\usepackage{url}
\usepackage[ansinew]{inputenc}
\usepackage{color}
\usepackage[final]{graphicx}
\usepackage{esint} 
\usepackage{bbm}
\usepackage{subfigure}
\usepackage{tikz}
\usepackage{cite}
\usepackage{enumitem}
\usepackage[hidelinks]{hyperref}
%\usepackage{refcheck}

%%%%%%%%%%%%%%%%%%%%%%%%%%%%%%%%%%%%%%%%%%%%%% DEFINITIONS %%%%%%%%%%%%%%%%%%%%%%%%%%%%%%%%%%%%%%%%%%%%%%%%%%%%%%%%
\renewcommand{\epsilon}{\varepsilon}

\numberwithin{equation}{section}

\newtheoremstyle{thmlemcorr}{10pt}{10pt}{\itshape}{}{\bfseries}{.}{10pt}{{\thmname{#1}\thmnumber{ #2}\thmnote{ (#3)}}}
\newtheoremstyle{thmlemcorr*}{10pt}{10pt}{\itshape}{}{\bfseries}{.}\newline{{\thmname{#1}\thmnumber{ #2}\thmnote{ (#3)}}}
\newtheoremstyle{defi}{10pt}{10pt}{\itshape}{}{\bfseries}{.}{10pt}{{\thmname{#1}\thmnumber{ #2}\thmnote{ (#3)}}}
\newtheoremstyle{remexample}{10pt}{10pt}{}{}{\bfseries}{.}{10pt}{{\thmname{#1}\thmnumber{ #2}\thmnote{ (#3)}}}
\newtheoremstyle{ass}{10pt}{10pt}{}{}{\bfseries}{.}{10pt}{{\thmname{#1}\thmnumber{ A#2}\thmnote{ (#3)}}}

\theoremstyle{thmlemcorr}
\newtheorem{theorem}{Theorem}
\numberwithin{theorem}{section}
\newtheorem{lemma}[theorem]{Lemma}
\newtheorem{corollary}[theorem]{Corollary}
\newtheorem{proposition}[theorem]{Proposition}

\theoremstyle{thmlemcorr*}
\newtheorem{theorem*}{Theorem}
\newtheorem{lemma*}[theorem]{Lemma}
\newtheorem{corollary*}[theorem]{Corollary}
\newtheorem{proposition*}[theorem]{Proposition}
\newtheorem{problem*}[theorem]{Problem}
\newtheorem{conjecture*}[theorem]{Conjecture}

\theoremstyle{defi}
\newtheorem{definition}[theorem]{Definition}

\theoremstyle{remexample}

\newenvironment{remark}
  {\pushQED{\qed}\remarkx}
  {\popQED\endremarkx}
\newtheorem{example}[theorem]{Example}

\theoremstyle{ass}

\newcommand{\Ccal}{\mathcal{C}}

\newcommand{\Ecal}{\mathcal{E}}
\newcommand{\Fcal}{\mathcal{F}}
\newcommand{\Gcal}{\mathcal{G}}

\newcommand{\Kcal}{\mathcal{K}}
\newcommand{\Lcal}{\mathcal{L}}

\newcommand{\Ncal}{\mathcal{N}}

\newcommand{\Pcal}{\mathcal{P}}

\newcommand{\Qcal}{\mathcal{Q}}

\newcommand{\Scal}{\mathcal{S}}

\newcommand{\Cfrak}{\mathfrak{C}}

\newcommand{\Pfrak}{\mathfrak{P}}

\DeclareMathOperator*{\argmin}{arg\,min}

\DeclareMathOperator{\diverg}{div}

\DeclareMathOperator{\dist}{dist}

\DeclareMathOperator{\supp}{supp}

\newcommand{\normlr}[1]{\left\|#1\right\|}

\newcommand{\normb}[1]{\bigl\|#1\bigr\|}

\newcommand{\abslr}[1]{\left|#1\right|}

\newcommand{\absb}[1]{\bigl|#1\bigr|}

\newcommand{\N}{\mathbb{N}}
\newcommand{\R}{\mathbb{R}}

\newcommand{\C}{\mathbb{C}}

\newcommand{\weakly}{\rightharpoonup}

\newcommand{\eps}{\epsilon}

\newcommand{\Nzero}{N^{s, p, \delta}(\Omega)}
\newcommand{\Nzerom}{N^{s, p, \delta}(\Omega;\R^m)}

\newcommand{\NzeroRn}{N^{s, p, \delta}(\R^n)}

\newcommand{\pro}{\pi_\delta^s}

    \newcommand{\vertiii}[1]{{\left\vert\kern-0.25ex\left\vert\kern-0.25ex\left\vert #1 
    \right\vert\kern-0.25ex\right\vert\kern-0.25ex\right\vert}}

\newcommand{\Ccg}{\Cfrak^{s,p, \d}}
\newcommand{\Pg}{\Pfrak^{s, p, \d}}

\DeclareMathOperator{\Div}{div}

% aus symbol-a4.pdf

\def\XXint#1#2#3{{\setbox0=\hbox{$#1{#2#3}{\int}$}
\vcenter{\hbox{$#2#3$}}\kern-.5\wd0}}

\DeclareMathOperator{\Id}{Id}

\DeclarePairedDelimiter\abs{\lvert}{\rvert}
\DeclarePairedDelimiter{\norm}{\lVert}{\rVert}
\DeclarePairedDelimiter{\inner}{\langle}{\rangle}

 %large parentheses
 %large brackets

\newcommand{\Rn}{\R^{n}}

\renewcommand{\phi}{\varphi}

\DeclareMathOperator{\sign}{sign}

\newcommand{\starto}{\stackrel{*}{\rightharpoonup}}
\newcommand{\weakto}{\rightharpoonup}

\def\XXint#1#2#3{{\setbox0=\hbox{$#1{#2#3}{\int}$}
     \vcenter{\hbox{$#2#3$}}\kern-.5\wd0}}

\newcommand{\Rmn}{\mathbb{R}^{m \times n}}

\newcommand{\Lip}{\mathrm{Lip}}
\makeatletter
\g@addto@macro\bfseries{\boldmath}
\makeatother

\newcommand{\Hspd}{H^{s,p,\delta}}

\usepackage{esint}

\usepackage{appendix}

\DeclareMathOperator{\diver}{div}

\renewcommand{\O}{\Omega}

\renewcommand{\d}{\delta}

%BEGIN definition of average integral

\def\XXint#1#2#3{{\setbox0=\hbox{$#1{#2#3}{\int}$}
		\vcenter{\hbox{$#2#3$}}\kern-.5\wd0}}

%END definition of average integral

\definecolor{colora}{RGB}{21,128,66}
\definecolor{colorb}{RGB}{21,66,128}
\definecolor{colorc}{RGB}{128,21,66}

\usepackage{lmodern}
\usepackage{graphicx}
\usepackage{wrapfig}

\newcommand{\abss}[1]{\langle #1 \rangle}
\newcommand{\Hquo}{\widetilde{H}^{s,p,\d}(\Omega)}
\newcommand{\Wquo}{\widetilde{W}^{1,p}(\Omega)}

\title[Non-constant functions with zero nonlocal gradient]{Non-constant functions with zero nonlocal gradient and their role in nonlocal Neumann-type problems}

\author{Carolin Kreisbeck}
\address{Mathematisch-Geographische Fakult\"at, Katholische Universit\"at Eichst\"att-Ingolstadt, Osten\-stra{\ss}e 28, 85072 Eichst\"att, Germany}
\email{carolin.kreisbeck@ku.de}

\author{Hidde Sch\"{o}nberger}
\address{Mathematisch-Geographische Fakult\"at, Katholische Universit\"at Eichst\"att-Ingolstadt, Osten\-stra{\ss}e 28, 85072 Eichst\"att, Germany}
\email{hidde.schoenberger@ku.de}

\begin{document}
\maketitle

\thispagestyle{empty}
\begin{abstract}  
This work revolves around properties and applications of functions whose nonlocal gradient, or more precisely, finite-horizon fractional gradient, vanishes. Surprisingly, in contrast to the classical local theory, we show that this class forms an infinite-dimensional vector space. Our main result characterizes the functions with zero nonlocal gradient in terms of two simple features, namely, their values in a layer around the boundary and their average. 
The proof exploits recent progress in the solution theory of boundary-value problems with pseudo-differential operators. 
We complement these findings with a discussion of the regularity properties of such functions and give illustrative examples. Regarding applications, we provide several useful technical tools for working with nonlocal Sobolev spaces when the common complementary-value conditions are dropped. Among these, are new nonlocal Poincar\'e inequalities and compactness statements, which are obtained after factoring out functions with vanishing nonlocal gradient.  
Following a variational approach, we exploit the previous findings to study a class of nonlocal partial differential equations subject to natural boundary conditions, in particular, nonlocal Neumann-type problems. 
Our analysis includes a proof of well-posedness and a rigorous link with their classical local counterparts via $\Gamma$-convergence as the fractional parameter tends to $1$.

\vspace{8pt}

\noindent\textsc{MSC (2020): 35R11, 49J45, Secondary: 47G20, 35S15, 46E35} 
\vspace{8pt}

\color{black}
\noindent\textsc{Keywords:}  nonlocal gradients, fractional and nonlocal Sobolev spaces, 
nonlocal variational problems and PDEs, 
natural and Neumann boundary conditions, localization
\vspace{8pt}

\noindent\textsc{Date:} \today.
\end{abstract}

\section{Introduction}

It is well-known that differentiable functions with zero gradient  
are exactly the constant functions, that is, for any open and connected set $\Omega\subset \R^n$ and $u\in C^1(\Omega)$ it holds that 
\begin{align}\label{zerograd_class}
\text{$\nabla u=0$ in $\Omega$\qquad  if and only if \qquad $u$ is constant on $\Omega$,}
\end{align}
and the same is true (almost everywhere) for Sobolev functions with weak gradients. One may wonder if this fundamental observation carries over when considering fractional and nonlocal derivatives instead of classical derivatives. As intriguingly basic as the question may sound, a universal answer is not easily available and depends on the specific setting, as we will demonstrate in the following. \smallskip

In fractional and nonlocal calculus, the study of gradient operators has attained increasing attention in recent years, see e.g.,~\cite{Shieh1, Comi1, Sil20, MeS15, BCM20, BCM23, KrS22, EGM22}. The nonlocal gradient of a function $u:\R^n\to \R$ is of the form 
	 	\begin{align} \label{eq: definition of nonlocal gradient}
			\Gcal_\rho u(x)= \int_{\R^n}  \frac{u(x)-u(y)}{|x-y|}\frac{x-y}{|x-y|}\rho(x-y)\, dy
		\end{align}
with a suitable kernel function $\rho$, whenever the integral is defined.
	
Especially the Riesz fractional gradient, that is, $D^s=\Gcal_{\rho^s}$ with $\rho^s \propto \abs{\,\cdot\,}^{-(n+s-1)}$ for $s\in (0,1)$, has been popular. Not only does $D^s$ have unique natural invariance and homogeneity properties~\cite{Sil20}, it also lends itself to a distributional approach towards fractional function spaces~\cite{Comi1}; in fact, the function spaces associated with $D^s$ in analogy to the classical Sobolev spaces coincide with the Bessel potential spaces $H^{s, p}(\R^n)$, as observed in~\cite{Shieh1}. The combination of these features make $D^s$ a good choice of fractional derivative, both from the applied point of view and 
in the context of variational theories and PDEs.   
	
In contrast, a compactly supported, radial kernel $\rho$ in~\eqref{eq: definition of nonlocal gradient} reduces the nonlocal interaction between all of $\R^n$ to points within an interaction range $\delta>0$, commonly referred to as horizon.  
By cutting-off the Riesz potential kernel, one obtains the finite-horizon fractional gradient defined as
\begin{align*}
D_\d^s = \Gcal_{\rho_\d^s} \quad \text{with} \quad \rho_\d^s \propto \frac{w_\d}{|\cdot|^{n+s-1}}
\end{align*}
where $w_\d:\R^n\to [0, 1]$ is a radial cut-off function supported in a ball of radius $\d$; for further properties, we refer to Section~\ref{sec:sobolevspaces}. These gradients $D_\d^s$, which we simply call nonlocal gradients in the following, are the key objects in this paper. They were first considered in~\cite{BCM23} by Bellido, Cueto \& Mora Corral (see also~\cite{CKS23}), motivated by applications in materials science. Since the nonlocal gradients inherit the desirable properties from the Riesz fractional gradients, while being suitable for variational problems on bounded domains, they have become the core of a newly proposed model for nonlocal elasticity. 

On a more technical note, we remark that in order to properly determine $D_\d^s u$ in $\Omega$, the function $u$ needs to be defined in the set $\Omega_\d=\{x\in \R^n: \dist(x, \Omega)<\d\}$ enlarged by the horizon variable, and in particular, in the collar $\Gamma_\d:=\Omega_\d\setminus \overline{\Omega}$ of thickness $\delta>0$ around $\Omega$, cf.~Figure~\ref{fig:setandcollars}.  
	The values of $u$ in $\Gamma_\d$ can be viewed as nonlocal boundary values. 
One defines the space $H^{s,p,\d}(\Omega)$ as the functions in $L^p(\Omega_\d)$ with $D_\d^s u\in L^p(\Omega)$; see  Section~\ref{sec:sobolevspaces} for more details.
As a powerful tool, we wish to point out the translation mechanism established in~\cite{CKS23, BCM23} (cf.~Section~\ref{subsec:translation}). It relates the nonlocal and local setting in the sense that nonlocal gradients can be expressed as classical ones and vice versa, allowing for statements to be carried over; in formulas, we have
\begin{align}\label{translation_intro}
D_\d^s =\nabla\circ (Q_\d^s\ast \,\cdot\,) \qquad\text{and} \qquad \nabla = \Pcal_\d^s \circ D_\d^s,
\end{align}
where $Q_\delta^s$ is an integrable, compactly supported kernel function and $\Pcal_\d^s$ corresponds to the inverse of the convolution with $Q_\delta^s$. In comparison with the analogous results for the Riesz fractional gradient \cite{KrS22, Shieh1}, the operator $\Pcal_\d^s$ replaces the fractional Laplacian of order $\frac{1-s}{2}$. 
  \smallskip

Let us now return to and specify the question raised earlier:
\[
\text{Is \eqref{zerograd_class} still true when $\nabla$ is replaced with $\Gcal_\rho$?}
\]
In the case of the Riesz fractional gradient $\Gcal_\rho=D^s$ on $H^{s, p}(\R^n)$ for $p \in (1,\infty)$, 
the answer is affirmative as a consequence of the fractional Poincar\'{e}-type inequalites~\cite[Theorem~1.8, 1.10 and 1.11]{Shieh1}; in fact, the functions with vanishing Riesz fractional gradient must even be zero due to their integrability properties. The same is true when $\Gcal_\rho=D^s_\d$ is considered for functions in the complementary-value space $H^{s, p, \d}_0(\Omega):=\{u\in H^{s,p, \d}(\R^n): u=0 \text{ a.e.~in $\Omega^c$}\}$. Here as well, a Poincar\'e inequality is available, see~\cite[Theorem~6.2]{BCM23}. If the complementary-value is dropped, however, and one considers nonlocal gradients $\Gcal_\rho=D_\delta^s$ on 
$H^{s,p, \d}(\Omega)$ with bounded $\Omega$, the picture changes substantially. 

This paper revolves around the class of functions with zero nonlocal gradient
 \begin{align*}
\Nzero:=\{h \in H^{s,p,\d}(\Omega)\,:\, D^s_\d h =0 \ \text{a.e.~in $\Omega$}\},
\end{align*}
which turns out to be non-trivial. Indeed, we show that there exist functions in $\Nzero$  that are non-constant in any open subset of $\Omega$ (Proposition~\ref{prop:nonconstant}) and establish that they are numerous in the sense that $\Nzero$ forms an infinite-dimensional space (Proposition~\ref{prop:Nzeroinfinitedim}). 

Knowing that the set $\Nzero$ consists of more than just constant functions stirs up interesting new issues for further investigation. We first give a characterization of all the elements of $\Nzero$, which provides a deeper understanding of its properties. 
With this at hand, we then highlight the role of the functions with zero nonlocal gradients in the theory of the spaces $H^{s, p, \d}(\Omega)$ and discuss applications in nonlocal differential inclusion problems and variational problems on $H^{s, p, \d}(\Omega)$ with Neumann-type boundary conditions. Here is a more detailed overview of our main new findings.\medskip

\color{black}
\textbf{Characterization of $\Nzero$.} While the set of functions in $W^{1,p}(\Omega)$ with zero gradient corresponds to the set of constant functions and can thus, be identified with $\R$ by taking mean values, 
the characterization of $\Nzero$ involves an additional feature due to boundary effects of the nonlocal interactions.
Roughly speaking, two ingredients are necessary to uniquely identify the elements of $\Nzero$, that is, an average or mean-value condition on $\Omega$ and boundary values in the collar region $\Gamma_\d :=\Omega_\d\setminus \overline{\Omega}$.
 
We start from the observation that $\Nzero$ consists of all functions $h\in L^p(\Omega_\delta)$ satisfying
\begin{align}\label{convolutionproblem_intro}
Q^s_\d*h=c \ \text{\ a.e.~in $\Omega$} \qquad \text{and} \qquad h=g \ \text{\ a.e.~in $\Gamma_\d$},
\end{align}
for a given $g \in L^p(\Gamma_\d)$ and $c \in \R$. This is a consequence of the translation mechanism~\eqref{translation_intro}. Hence, the problem reduces to finding the solutions of~\eqref{convolutionproblem_intro}. 
Since $\Pcal_\d^s$ from \eqref{translation_intro} is in fact a pseudo-differential operator, our proof strategy is to rewrite~\eqref{convolutionproblem_intro} equivalently as a pseudo-differential Dirichlet problem and to exploit the recent progress in their existence, uniqueness, and regularity theory. Precisely, the properties of $\Pcal_\d^s$ make it fit into the setting of the works by Grubb~\cite{Gru22} and by Abels \& Grubb~\cite{AbG23}. Given that the regularity results are sensitive to the relation between the fractional and integrability parameters $s$ and $p$, there are two qualitatively different regimes to distinguish. 

Our main characterization result (see Theorem~\ref{th:Ncalchar} and Proposition~\ref{th:Ncalchar_plarge}) states the following: 
\begin{itemize}
\item[$(i)$] If $p\in (1, \frac{2}{1-s})$ (including the case $p=2$), then $\Nzero$ consists of the unique solutions to~\eqref{convolutionproblem_intro}, which exist for every constant $c\in \R$ and given boundary values $g \in L^p(\Gamma_\d)$.\smallskip 
\item[$(ii)$] For $p \in [\frac{2}{1-s},\infty)$, only those (unique) solutions to~\eqref{convolutionproblem_intro} that lie also in $L^p(\Omega_\d)$ constitute $\Nzero$.
\end{itemize}
An alternative way of phrasing $(i)$ is to say that
\[
\text{$\Nzero$ is isomorphic to $\R\times L^p(\Gamma_\d)$,}
\]
with the isomorphism $\Nzero \ni h\mapsto (\int_\Omega Q_\d^s\ast  h\, dx, h|_{\Gamma_\d})$. This formalizes the statement that an average condition on $\Omega$ and the boundary values in a boundary layer of thickness $\d$ are the characteristics for any function with zero nonlocal gradient. Besides, we show that $h\mapsto (\int_\Omega h\, dx, h|_{\Gamma_\d})$ is a isomorphism between $\Nzero$ and $\R\times L^p(\Gamma_\d)$ as well, which indicates  that even a simple mean-value condition along with the values in $\Gamma_\d$ suffices to pin down the elements of $\Nzero$. Part $(ii)$ implies that the previous identifications with $\R\times L^p(\Gamma_\d)$ remain injective when $p \in [\frac{2}{1-s},\infty)$, however, surjectivity generally fails (Remark~\ref{rem:plargebcs}).

\medskip

\textbf{Technical tools in $H^{s,p, \d}(\Omega)$ modulo functions of zero nonlocal gradient.} 
The set $\Nzero$ can be used to develop new functional analytic tools for the spaces $H^{s,p,\d}(\O)$ without complementary-values. Unlike for $H^{s, p, \d}(\R^n)$ and $H^{s, p, \d}_0(\Omega)$,
however, analogues of the relevant tools and estimates in classical Sobolev spaces only hold in the quotient space $H^{s,p, \d}(\Omega)/\Nzero$, meaning modulo elements in $\Nzero$. With that in mind, we obtain the following:\smallskip

 \begin{itemize}
\item[\textit{(a)}] \textit{Refined translation mechanism for functions on bounded domains:}
We show in Theorem~\ref{th:connecquo} that the quotient space $H^{s, p, \d}(\Omega)/ \Nzero$ is isometrically isomorphic to $W^{1,p}(\Omega)$ modulo constants, meaning that one can identify $H^{s,p,\d}(\Omega)$ and $W^{1,p}(\Omega)$ up to functions with zero (nonlocal) gradient. The isomorphism turns nonlocal gradients into gradients.  \smallskip

\color{black}
\item[\textit{(b)}] \textit{Extension of functions from $H^{s,p, \d}(\Omega)$ to $H^{s,p, \d}(\R^n)$ up to $\Nzero$:} 
Even though an exact extension of functions in $H^{s, p, \d}(\Omega)$ to  $\R^n$ is generally not possible (cf. Example~\ref{ex}), there exists a bounded linear operator $\Ecal^s_\d:H^{s,p, \d}(\Omega) \to H^{s,p,\d}(\R^n)$, such that $\Ecal_\d^s u$ differs from $u$ in $\Omega_\d$ merely by a function with zero nonlocal gradient. \smallskip

\item[\textit{(c)}] \textit{Nonlocal Poincar\'e-type inequalities:}  As a major tool, we prove different nonlocal versions of Poincar\'e inequalities. If $p\in (1, \frac{1}{2-s})$, there exists a constant $C>0$ such that
\[
\norm{u}_{L^p(\Omega_\d)} \leq C\norm{D^s_\d u}_{L^p(\Omega;\R^n)} 
\]
for all $u \in H^{s, p, \d}(\Omega)$ satisfying $u=0$ in $\Gamma_\d$ and one of the averaging conditions $\int_\Omega u\, dx=0$ or $\int_\Omega Q_\d^s\ast u\, dx=0$. The same estimate holds for all $p \in (1,\infty)$ and $u\in H^{s, p, \d}(\Omega)$ whose metric projection onto $\Nzero$ vanishes. 

 \smallskip
\item[\textit{(d)}] \textit{$L^p$-compactness modulo $\Nzero$.} Based on $(c)$, we derive the following Rellich-Kondra\-chov-type compactness:  If $(u_j)_j\subset H^{s,p, \d}(\Omega)$ is a bounded sequence such that the metric projection of $u_j$ onto $\Nzero$ vanishes for all $j$, then $(u_j)_j$ is precompact in $L^p(\Omega_\d)$. 
\end{itemize}

We remark that for the complementary-value spaces $H_0^{s, p, \d}(\Omega)$ the analogues of $(a)$, $(c)$, and $(d)$ have recently been established in~\cite{BCM23, CKS23}. The approach there relies on Fourier techniques, given that the functions in $H_0^{s, p, \d}(\Omega)$ are defined on the whole of $\R^n$.
\medskip

\color{black}
\textbf{Variational problems on $\Nzero^\perp$ and nonlocal boundary-value problems.}
A significant application of the aforementioned tools are the existence theory and asymptotic analysis of nonlocal PDEs subject to Neumann-type boundary conditions. Precisely, we adopt a variational viewpoint and prove the existence of solutions to the problem
\begin{align}\label{functional_Fds_intro}
\text{Minimize } \quad \frac{1}{2}\int_{\Omega} \abs{D^s_\d u}^2\,dx-\int_{\Omega_\d}F u\,dx \quad \text{over $N^{s, 2, \d}(\Omega)^\perp\subset H^{s, 2,\d}(\Omega)$,} 
\end{align} 
where $\Omega\subset \R^n$ is a bounded Lipschitz domain, $F \in L^{2}(\Omega_\d)$, and $N^{s,2, \d}(\Omega)^\perp$ denotes the orthogonal complement of $N^{s, 2, \d}(\Omega)$; note that $N^{s,2,\d}(\Omega)$ plays the same role as the set of constant functions in the variational formulation of the Neumann problem with classical gradients. In fact, a remarkable aspect of our framework is that one can also  handle more general vector-valued nonlinear problems with $p \in (1,\infty)$ and energy densities that are either quasiconvex or polyconvex, see~Theorem~\ref{th:wellposed} and Remark~\ref{rem:wellposed}.

To draw the connection between \eqref{functional_Fds_intro} and PDEs with Neumann-type boundary conditions, one assumes the nonlocal compatibility condition
\[
\int_{\Omega_\d} Fh\,dx =0 \quad \text{for all $h \in N^{s,2,\d}(\O)$},
\]
under which the solutions to \eqref{functional_Fds_intro} weakly satisfy Euler-Lagrange equations with a nonlocal boundary operator $\Ncal^s_\d$ featured in the collar regions, see~\eqref{ELequation_split}. In fact, this boundary operator was recently introduced by Bellido, Cueto, Foss \& Radu~\cite{BCFR23}, where the authors derive, amongst others, a new integration by parts formula in the spirit of~\cite{DROV17}.

Our second main result regarding~\eqref{functional_Fds_intro} confirms the expectation that these problems localize as the fractional parameter $s$ tends to $1$, that is, they converge to their classical counterparts with usual gradients.
Working in the framework of variational convergence, we obtain that the $\Gamma$-limit of the functional in~\eqref{functional_Fds_intro} with respect to strong convergence in $L^2(\Omega_\d)$ is 
\begin{align*}
\displaystyle \frac{1}{2}\int_{\Omega}\abs{\nabla u}^2\,dx -\int_{\Omega}  F u \,dx \qquad\text{for $u\in W^{1,2}(\Omega)$ with $\int_\O u \,dx =0$,}
\end{align*}
see Theorem~\ref{th:local}; again, this also holds in the more general setting mentioned before.
In the case that $F$ satisfies the classical compatibility condition $\int_{\Omega} F\,dx = 0$, we obtain, in particular, that the minimizers of~\eqref{functional_Fds_intro} converge in $L^2(\Omega)$ as $s \uparrow 1$ to the unique mean-zero solution of the standard Neumann problem
\[
\begin{cases}
-\Delta u = F & \text{in $\Omega$},\\
\frac{\partial u}{\partial \nu} = 0 & \text{on $\partial \Omega$}.
\end{cases}
\]
Localisation via rigorous limit passages is a general theme in the study of fractional and nonlocal calculus, not least because they serve as important consistency checks for new problems and models; we refer e.g., to~\cite{BCM21, CKS23} for $s\uparrow 1$, and~\cite{MeD15, BMCP15, MeS15} for limits with vanishing horizon $\delta\to 0$. 

Let us close by pointing out some literature on Neumann problems in other fractional and nonlocal set-ups, involving the fractional Laplacian and more general integral and integro-differential operators, see~e.g.,~\cite{MuP19, BBC03, DeS15, AFNRO23, Gru22, DROV17} and also the references therein. One of the works we wish to highlight is~\cite{DROV17}, where Dipierro, Ros-Oton \& Valdinoci introduce a Neumann problem for the fractional Laplacian by a natural notion of normal nonlocal derivative. These results have been refined, expanded and generalized in various directions, e.g., in~\cite{AFNRO23, FoK23, DPLV22, DPLV23}. Closely related are also the recent results on nonlocal trace spaces~\cite{DTWY22, GrK23, GrH22, TiD17}. The distinguishing factor in our work, is the central role of a nonlocal gradient object, which enables us to handle a broad variety of nonlinearities.

\medskip

\textbf{Outline.} 
We have organized this paper as follows. After introducing notations and providing theoretical background and useful auxiliary results in Section~\ref{sec:prelim}, Section~\ref{sec:characterization_nonlocalgradfree} is centered around a solid understanding of the functions with vanishing finite-horizon fractional gradient. Our analysis includes proofs that non-constant functions with vanishing nonlocal gradient exist and that $\Nzero$ is an infinite-dimensional space, see Section~\ref{subsec:nonconstant}. The main theorems about the characterization of $\Nzero$ are stated and proven in Section~\ref{subsec:characterization}. We round off this section with a discussion of regularity properties of functions with zero nonlocal gradient and give illustrative examples in Section~\ref{subsec:regulex}.

The second part of the paper presents different implications and applications involving~$\Nzero$. In Section~\ref{sec:tools}, we establish the technical tools $(a)$-$(d)$ for working in the nonlocal function spaces $H^{s, p, \d}(\Omega)$. The previous findings are then used in Section~\ref{sec:diffinclusions} to contribute to the theory of nonlocal differential inclusions. We show that rigidity statements as well as existence results for approximate solutions can be carried over from the classical setting via the translation mechanism. Section~\ref{sec:neumann} features the new class of variational problems on $\Nzero^{\perp}$, which relates to nonlocal Neumann-type problems. 
A proof of well-posedness for these problems is contained in Section~\ref{sec:wellposed}, while Section~\ref{sec:localisation} establishes the rigorous link with the classical local problems through a localization result via $\Gamma$-convergence.

\section{Preliminaries}\label{sec:prelim}

In this section, we introduce the relevant notations and collect the necessary background on nonlocal gradients and function spaces along with some useful technical tools.

\subsection{Notation}  Unless specified otherwise in the following, we take $s\in (0,1)$, $p\in [1, \infty]$, and
 $\delta>0$.
\color{black}  The Euclidean norm of $x \in \R^n$ is denoted by $\abs{x}$ and
\begin{align*}
\abss{x} := \sqrt{1+\abs{x}^2}.
\end{align*}
We use the notation $l_A$ with $A\in \Rmn$ for the linear function $l_A(x)=Ax$ with $x\in \R^n$. 

Moreover, $E^c:=\R^n \setminus E$ is the complement of a set $E \subset \R^n$, $\overline{E}$ is its closure, and $|E|$ is its Lebesgue measure, provided $E$ is measurable. We use the notation $\mathbbm{1}_E$ for the indicator function of a set $E\subset \R^n$, i.e., $\mathbbm{1}_E(x)=1$ if $x\in E$ and $\mathbbm{1}_E(x)=0$ otherwise. Whenever convenient, we identify a function on a subset of $\R^n$ with its trivial extension by zero without explicit mention. If we wish to highlight the trivial extension, we use an extra indicator function, writing e.g.,~$\mathbbm{1}_E f:\R^n\to \R$ for the zero extension of $f:E\to\R$. The restriction of any $f:E\to \R$ to a subset $E'\subset E$ is denoted by $f|_{E'}$. 

By $B_\rho(x)=\{ y \in \R^n : \abs{x-y}<\rho\}$, we denote the ball centered at $x \in \R^n$ with radius $\rho>0$, and $\dist(x,E)$ is the distance between a point $x \in \R^n$ and a set $E\subset \R^n$.
For a domain $\Omega \subset \R^n$, i.e., open and connected set, we introduce its expansion and reduction by thickness $\delta$ as
\begin{align*}\label{Omegadelta}
\Omega_\delta:= \Omega+B_\d(0)=\{x \in \R^n\,:\, {\rm dist}(x,\Omega) <\delta\} \quad \text{and} \quad \Omega_{-\d}:=\{x \in \Omega \,:\, {\rm dist}(x,\partial \Omega) > \d\},
\end{align*}
where $\partial \Omega$ is the boundary of $\Omega$, and define
\begin{center}
$\Gamma_\d:=\Omega_\d\setminus \overline{\Omega}$ \quad and \quad $\Gamma_{-\d}:=\Omega\setminus \overline{\Omega_{-\d}}$
\end{center} as the inner and outer collars of $\Omega$, respectively. Further, let $\Gamma_{\pm \d}:=\Gamma_\d\cup \Gamma_{-\d}\cup \partial \Omega$ be the double layer around the boundary of $\Omega$. For an illustration of this geometric set-up, see Figure~\ref{fig:setandcollars}.
\color{black}

\begin{figure}[h]
\includegraphics[width=7.5cm]{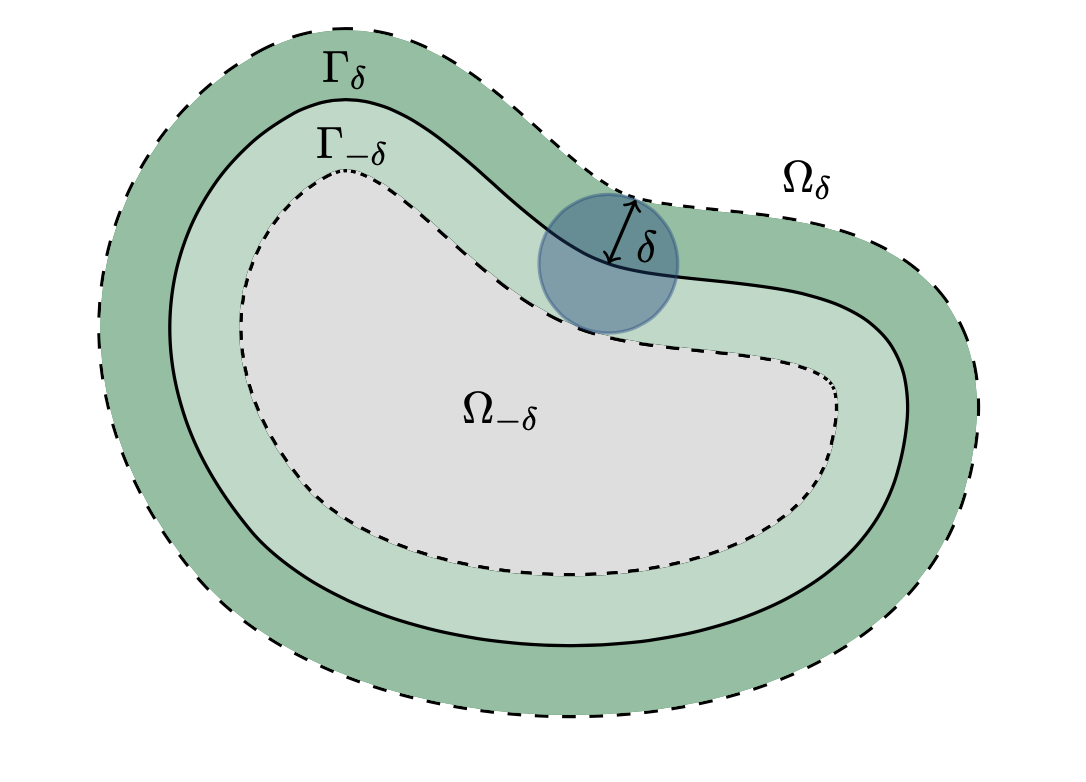}
\caption{Illustration of a set $\Omega\subset \R^n$ with its expansion $\Omega_\d$, the outer and inner collar regions $\Gamma_\d$ (green) and $\Gamma_{-\d}$ (light green), and the reduced set $\Omega_{-\d}$ (gray). }\label{fig:setandcollars}
\end{figure}

Let $U \subset \R^n$ be an open set. The notation  $C_c^\infty(U)$ stands for the space of smooth functions $U\mapsto \R$ with compact support, which will often be identified with their trivial extensions to $\R^n$ by zero, and $\Lip(\psi)$ is the Lipschitz constant of a function $\psi:\R^n \to \R$. Throughout the paper, we use the standard notation for Lebesgue- and Sobolev-spaces $L^p(U)$ and $W^{1,p}(U)$ with $p \in [1,\infty]$. For the inner product on $L^2(U)$, we write $\langle \cdot, \cdot\rangle_{L^2(U)}$. Notice that each of the function spaces defined above, as well as those introduced later, can be extended componentwise to vector-valued functions; the target set is then reflected in the notation, for example, $L^p(U;\R^m)$. Moreover, the restriction of a function space is denoted, for example, as $C^{\infty}(\R^n)|_{U}:=\{u |_{U} \,:\,u \in C^{\infty}(\R^n)\}$.

For an integrable function $f \in L^1(\R^n)$, the Fourier transform is defined as
\begin{equation*}
\widehat{f}(\xi):=\int_{\Rn} f(x) \, e^{-2\pi i x \cdot \xi} \, dx, \quad \xi \in \R^n.
\end{equation*}
It is well-known that the Fourier transform is an isomorphism from the Schwartz space $\mathcal{S}(\R^n;\C)$ onto itself, which can be extended to the spaces $L^2(\R^n;\C)$ and  the space of tempered distributions $\mathcal{S}'(\R^n;\C)$ by density and duality, respectively. \color{black} The inverse Fourier transform of $f$, denoted $f^\vee$, corresponds to $x \mapsto \widehat{f}(-x)$. For more background on Fourier analysis, see e.g.,~\cite{duon2000,Gra14a}. 

Lastly, $C$ denotes a generic constant, which may change from one estimate to the next without further mention. To indicate that a constant depends on specific quantities, they are added in brackets.

\subsection{Nonlocal gradients and Sobolev spaces}\label{sec:sobolevspaces}
Let us now introduce in detail the key objects in this paper, namely, a class of fractional gradients with finite horizon, and the associated nonlocal Sobolev spaces. Our presentation follows along the lines of~\cite{BCM23,CKS23} (see also~\cite{BCM23Eringen}), where we also refer to for more details.

The truncated Riesz fractional gradient, simply referred to as nonlocal gradient, and the corresponding divergence for smooth functions are defined as follows: 
	 For $s \in (0,1)$ and $\delta>0$, the nonlocal gradient of $\varphi\in C^{\infty}(\R^n)$ is \normalsize
	 	\begin{align} \label{eq: definition of nonlocal gradient}
			D_\delta^s \varphi(x)= \int_{ \R^n}  \frac{\varphi(x)-\varphi(y)}{|x-y|}\frac{x-y}{\abs{x-y}}\,\rho_{\d}^s(x-y)\, dy \quad \text{for $x \in \R^n$, }
		\end{align}
and the nonlocal divergence of $\psi \in C^{\infty} (\Rn;\Rn)$ is 
	\begin{align*}
			\diver_{\delta}^s \psi(x)=  \int_{ \R^n} \frac{\psi(x)-\psi(y)}{|x-y|}\cdot\frac{x-y}{\abs{x-y}}\,\rho_{\d}^s(x-y) \, dy \quad \text{for $x \in \R^n$;}
		\end{align*}
here, the kernel function $\rho_\d^s$ is given by
		\begin{align*}
		\rho_\d^s(z) = c_{n,s,\d} \frac{w_\d(z)}{|z|^{n+s-1}} \quad \text{ for $z\in \R^n\setminus \{0\}$, }
		\end{align*}
with $w_\d: \R^n\to [0,\infty)$ a non-negative cut-off function satisfying the hypotheses
\begin{enumerate}[label = (H\arabic*)]
	\item \label{itm:h1}$w_\d$ is radial, i.e., $w_\d=\overline{w}_\d(\abs{\,\cdot\,})$ with a function $\overline{w}_\d:\R \to [0,\infty)$;\\[-0.3cm]
	\item \label{itm:h2}$w_\delta$ is smooth and compactly supported in $B_\delta(0)$, i.e., $w_\d \in C_c^\infty(B_\d(0))$;\\[-0.3cm]
	\item \label{itm:h3} $w_\delta$ is normalized around the origin, i.e., $w_\d =1$ on $B_{\mu\delta}(0)$ for some $\mu\in (0,1)$;\\[-0.3cm]
	\item \label{itm:h4}$w_\delta$ is radially decreasing, i.e., $w_\d (z) \geq w_\d (\tilde z)$ if $|z| \leq |\tilde z|$,
\end{enumerate}
and the scaling constant $c_{n,s,\d}>0$ is such that
	\begin{align}\label{scalingfactor}
		c_{n,s,\d}\int_{B_\d(0)} \frac{w_\d(z)}{\abs{z}^{n+s-1}}\,dz=n.
	\end{align}	
	
\begin{remark}\label{rem:dsdconv}
a) Note that the scaling factor $c_{n, s,\d}$, determined by~\eqref{scalingfactor}, is the same as in \cite{BCM23Eringen} in order to ensure that the nonlocal derivative of any linear map $l_A$ with $A \in \Rmn$ is equal to $A$, see~\cite[Proposition~4.1]{BCM23Eringen}. This choice is slightly different from the scaling in \cite{CKS23}, but provides no substantial issues for the application of these results, as discussed in Remark~\ref{rem:scaling} below.\smallskip

b) An alternative way of expressing the nonlocal gradient in~\eqref{eq: definition of nonlocal gradient} 
is as a principle value integral. In view of the radial symmetry of $w_\delta$ from \ref{itm:h1}, one has for $x\in \R^n$ that
\begin{align}\label{nonlocalgrad_alter}
 D_\delta^s \varphi(x) = \text{p.v.}~\int_{B(x,r)^c} \varphi(y) d_\delta^s(x - y) \, dy  :=  \lim_{r \downarrow 0}\int_{B(x,r)^c} \varphi(y) d_\delta^s(x - y) \, dy 
\end{align}
 with $d_\delta^s(z) =-c_{n, s, \d} \frac{zw_\delta(z)}{|z|^{n+s+1}}$ for $z\in \R^n\setminus\{0\}$. This shows, in particular, that $D^s_\d \phi(x)$ can be written as the convolution of $d_\d^s$ with $\varphi$, when $x\notin \supp(\phi)$.
\end{remark}

The above definitions can be extended to locally integrable functions via a distributional approach. 
Indeed, for $\varphi \in C^{\infty}_c (\Rn)$ and $\psi \in C_c^{\infty} (\Rn; \Rn)$, the integration by parts formula 
\begin{align*}
\int_{\R^n} D_\delta^s \varphi \cdot \psi \, dx =& - \int_{\R^n} \varphi \diver_\delta^s \psi \, dx
\end{align*}
holds. 
Based on this, we then define $v \in L^1_{\rm loc}(\O;\R^n)$ as the weak nonlocal gradient of $u \in L^1_{\rm loc}(\Omega_\d)$, written as $v = D^s_\d u$, if
\begin{equation}\label{eq:intbyparts}
\int_{\O}v \cdot \psi \,dx = - \int_{\O_\d} u \Div^s_\d \psi\,dx \qquad\text{for all $\psi \in C_c^{\infty}(\O;\R^n)$};
\end{equation}
the weak nonlocal divergence is defined analogously.
 In parallel to classical Sobolev spaces, one can introduce nonlocal Sobolev spaces as follows.

\begin{definition}[Nonlocal Sobolev spaces]\label{def:Hspd}
Let $s\in (0,1)$, $\delta>0$, $p \in [1,\infty]$, and let $\O \subset \R^n$ be open. The nonlocal Sobolev space $\Hspd(\O)$ is defined as
\[
\Hspd(\O):=\{u \in L^p(\O_\d) \,:\, D^s_\d u \in L^p(\O;\R^n)\},
\]
endowed with the norm
\begin{displaymath}\label{normHspdelta}
\lVert u\rVert_{H^{s,p,\d}(\O)} = \Bigl( \left\| u \right\|_{L^p (\O_{\d})}^p + \| D_\delta^s u \|_{L^p (\O;\R^{n})}^p \Bigr)^{\frac{1}{p}}.
\end{displaymath}
\end{definition}

These spaces can be equivalently defined via density if $\Omega$ is a bounded Lipschitz domain or $\Omega=\R^n$, see~\cite[Theorem~1]{CKS23}. A more detailed study of these spaces, including results such as Leibniz rules, Poincar\'{e} inequalities and compact embeddings can be found in~\cite{BCM23, CKS23}.  \color{black}\smallskip

When working with functions on the full space $\R^n$, we will often exploit the connection between the nonlocal Sobolev spaces of Definition~\ref{def:Hspd} and the well-known Bessel potential spaces, which are defined for any $t\in \R$ and $p\in (1, \infty)$ as
\begin{equation}\label{eq:besselpotspaces}
H^{t,p}(\R^n) =\bigl\{ u \in \Scal'(\R^n) \,:\, \bigl(\abss{\,\cdot\, }^t \widehat{u}\bigr)^{\vee} \in L^p(\R^n)\bigr\},
\end{equation}
with the norm $\norm{u}_{H^{t, p}(\R^n)}=\norm{ \bigl(\abss{\,\cdot\, }^t \widehat{u}\bigr)^{\vee}}_{L^p(\R^n)}$ and the notation $H^{t}:=H^{t,2}$;
for more on the theory of Bessel potential spaces, see e.g.,~\cite[Chapter~1.3.1]{Gra14b} or \cite{Tri83}. 
Indeed, it holds for all $p\in (1, \infty)$ and $s\in (0,1)$ that
\begin{align*}
H^{s, p, \d}(\R^n) = H^{s, p}(\R^n),
\end{align*} 
with equivalent norms. This follows from the observation that $H^{s,p}(\R^n)$ coincides with the space of functions in $L^p(\R^n)$ with a weak Riesz fractional gradient in $L^p(\R^n;\R^n)$~(cf.~\cite[Theorem~1.7]{Shieh1} together with the density result in \cite[Theorem~A.1]{Comi3}), along with the fact that the latter is again the same as $H^{s,p, \d}(\R^n)$ due to~\cite[Lemma~5]{CKS23}.

We mention here some additional properties of the Bessel potential spaces that we need. First of all, for each $t > 0$, there is a $f_{t} \in L^1(\R^n)$ with $\norm{f_t}_{L^1(\R^n)}=1$ and $\widehat{f}_t=\abss{\cdot}^{-t}$ ($f_t$ is a rescaled version of the Bessel potential function, see~\cite[Chapter~1.2.2]{Gra14b}). Therefore, for any $ t_1 < t_2 $ we find with Young's convolution inequality
\begin{align}
\begin{split}\label{eq:increasingnorm}
\norm{u}_{H^{t_1,p}(\R^n)} &= \norm{ \bigl(\abss{\,\cdot\, }^{t_1} \widehat{u}\bigr)^{\vee}}_{L^p(\R^n)}=\norm{f_{t_2-t_1}* \bigl(\abss{\,\cdot\, }^{t_2} \widehat{u}\bigr)^{\vee}}_{L^p(\R^n)} \\
&\leq \norm{ \bigl(\abss{\,\cdot\, }^{t_2} \widehat{u}\bigr)^{\vee}}_{L^p(\R^n)}=\norm{u}_{H^{t_2,p}(\R^n)}.
\end{split}
\end{align}
Secondly, if $(u_j)_j \subset H^{t,p}(\R^n)$ is a bounded sequence and $t>0$, then we find that
\[
u_j = f_{t} * \bigl(\abss{\,\cdot\, }^{t} \widehat{u}_j\bigr)^{\vee},
\]
which is the convolution of an $L^1$-function with a bounded sequence in $L^p$, and hence, precompact in $L^p_{\mathrm{loc}}(\R^n)$ by the Fr\'{e}chet-Kolmogorov criterion (see~e.g.,~\cite[Corollary~4.28]{Brezis}). As such, $H^{t,p}(\R^n)$ is compactly embedded into $L^p_{\mathrm{loc}}(\R^n)$. In fact, when $t p > n$, we find that $f_{t} \in L^{p'}(\R^n)$ with $p'$ the dual exponent of $p$ (cf.~\cite[Theorem~1.3.5\,(c)]{Gra14b}), so that one can even deduce the compact embedding of $H^{t,p}(\R^n)$ into $C_{\mathrm{loc}}(\R^n)$ due to the Arzel\`{a}-Ascoli theorem.

In the following, we also use the complementary value space of $ H^{t,p}(\R^n)$ for $t \geq 0$, which consists of functions with zero values outside of an open set $V
\subset \R^n$ and is denoted by
\[
H^{t,p}_0(V)=\{u \in H^{t,p}(\R^n)\,:\, u=0 \ \text{a.e.~in $V^c$}\}.
\]

\subsection{Translation operators}\label{subsec:translation}
In this section, we present a method that will be frequently used, and further refined (see Section~\ref{sec:app1}) in this paper, namely, a translation procedure that allows switching between nonlocal gradients and classical gradients. The following auxiliary results are mainly taken from~\cite{CKS23}; related statements about the Riesz fractional gradient have been established in~\cite{KrS22}.  

\color{black}
Our starting point is the following finite-horizon analogue of the Riesz potential from \cite{BCM23}, defined by
\begin{align}\label{eq:Qdeltas}
Q_\d^s : \Rn \setminus \{0\} \to \R, \quad Q^s_\d(x)=c_{n,s,\d}\int_{\abs{x}}^\d \frac{\overline{w}_{\d}(r)}{r^{n+s}}\,dr.
\end{align} 
It holds that $Q^s_\d$ is integrable with compact support in $B_\delta (0)$ and, a simple calculation yields that, due to the choice of scaling, 
\begin{align}\label{int=1}
\norm{Q^s_\d}_{L^1(\R^n)}=1.
\end{align}

\begin{remark}\label{rem:scaling}
a) With the scaling constant $c_{n,s}$ used in \cite{CKS23}, one obtains instead of~\eqref{int=1} that $[0,1) \ni s \mapsto \norm{Q^s_\d}_{L^1(\R^n)}$ is continuous with
$
\lim_{s \to 1} \norm{Q^s_\d}_{L^1(\R^n)} =1,
$
see~\cite[Lemma~6]{CKS23}. This shows that the two different scaling regimes are comparable uniformly in $s$. \smallskip

b) The Fourier transform of $\widehat{Q}^s_\d$ is a smooth, positive and radial function. Moreover, the difference between $\widehat{Q}^s_\d$ and $\xi \mapsto \abs{2\pi \xi}^{-(1-s)}$ is a Schwartz function for $\abs{\xi} \geq 1$, see~\cite{BCM23} and \cite[Remark~2 and Lemma~11]{CKS23}.
\end{remark}

An essential observation about the kernel function $Q_\d^s$ regards its relation with the nonlocal gradient $D_\d^s$, that is,
\[
D^s_\d \phi = \nabla (Q^s_\d * \phi) = Q^s_\d * \nabla \phi \qquad  \text{for any $\phi \in C_c^{\infty}(\R^n)$.}
\]
This identity can be extended to the Sobolev spaces in a weak sense, as shown in~\cite[Theorem~2\,$(i)$]{CKS23}.

\color{black}
\begin{lemma}[From nonlocal to local gradients] 
\label{le:translation1}
Let $s\in (0,1)$, $\delta>0$, $p \in [1,\infty]$, and $\O \subset \R^n$ be open. 
Then, the linear map $\Qcal_\delta^s: H^{s, p, \delta}(\Omega)\to W^{1,p}(\Omega), \ u\mapsto Q_\delta^s\ast u$ is bounded (uniformly with respect to $s$) with 
\begin{align*}
(\nabla \circ \Qcal_\d^s) u =\nabla (\Qcal_\delta^s u) = D^s_\delta u \quad\text{for every $u\in \Hspd(\Omega)$}.
\end{align*}
\end{lemma}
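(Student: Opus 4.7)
The plan is to handle two claims in turn: the $L^p$-bound on the convolution $\Qcal_\d^s u = Q_\d^s * u$, and the distributional identity $\nabla(\Qcal_\d^s u) = D_\d^s u$ on $\Omega$. Together these produce the $W^{1,p}$-bound uniformly in $s$.

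For the first claim, I note that $Q_\d^s$ has compact support in $B_\d(0)$, so for any $x \in \Omega$ the ball $B_\d(x)$ is contained in $\Omega_\d$ and $(\Qcal_\d^s u)(x) = \int_{B_\d(x)} Q_\d^s(x-y)u(y)\,dy$ is well-defined for $u \in L^p(\Omega_\d)$. Young's convolution inequality, combined with the normalization $\|Q_\d^s\|_{L^1(\R^n)} = 1$ from \eqref{int=1}, then yields
\[
\|\Qcal_\d^s u\|_{L^p(\Omega)} \leq \|u\|_{L^p(\Omega_\d)},
\]
for every $p \in [1,\infty]$, with a constant independent of $s$.

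For the identity, I would proceed by duality. Fix $\psi \in C_c^\infty(\Omega;\R^n)$ and apply Fubini---justified because $Q_\d^s \in L^1$ has compact support in $B_\d(0)$, $u \in L^p(\Omega_\d)$, and $\psi$ is smooth with compact support in $\Omega$---to write
\[
\int_\Omega (\Qcal_\d^s u)(x)\,\Div \psi(x)\,dx = \int_{\Omega_\d} u(y)\,(Q_\d^s * \Div \psi)(y)\,dy,
\]
the $y$-integration being confined to $\Omega_\d$ because $Q_\d^s(x-y)=0$ unless $y \in B_\d(x)\subset \Omega_\d$. The key auxiliary input is the smooth pointwise identity $\Div_\d^s \psi = Q_\d^s * \Div \psi$ on $\R^n$ for $\psi \in C_c^\infty(\R^n;\R^n)$, which is the divergence counterpart of $D_\d^s \phi = Q_\d^s * \nabla \phi$ stated just before the lemma and follows by applying the gradient identity componentwise and using the radial symmetry of $\rho_\d^s$. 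Together with the weak-gradient definition \eqref{eq:intbyparts}, this rewrites the expression above as
\[
\int_\Omega (\Qcal_\d^s u)\,\Div \psi\,dx = \int_{\Omega_\d} u\,\Div_\d^s \psi\,dy = -\int_\Omega D_\d^s u \cdot \psi\,dx,
\]
which is exactly the statement $\nabla(\Qcal_\d^s u) = D_\d^s u$ in $\mathcal{D}'(\Omega;\R^n)$. Since $D_\d^s u \in L^p(\Omega;\R^n)$ by hypothesis, the weak gradient of $\Qcal_\d^s u$ lies in $L^p$, so $\Qcal_\d^s u \in W^{1,p}(\Omega)$ with $\|\Qcal_\d^s u\|_{W^{1,p}(\Omega)} \leq \|u\|_{H^{s,p,\d}(\Omega)}$, uniformly in $s$.

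The only mildly delicate point is verifying the smooth divergence identity and the Fubini swap at the stated regularity; both are routine once the $L^1$-integrability and compact support of $Q_\d^s$ are on the table, and no harmonic-analytic machinery is required. The $s$-uniformity is built in because $\|Q_\d^s\|_{L^1(\R^n)}$ equals $1$ for every $s$ and because the gradient of $\Qcal_\d^s u$ is identified \emph{exactly} with $D_\d^s u$, not merely estimated by it.
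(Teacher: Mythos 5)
Your proof is correct, and it follows the standard route: the paper itself does not prove this lemma but cites \cite[Theorem~2\,$(i)$]{CKS23}, and your argument (Young's inequality with $\|Q_\d^s\|_{L^1}=1$ for the $s$-uniform $L^p$ bound, then Fubini plus the smooth identity $\Div_\d^s\psi=Q_\d^s*\Div\psi$ tested against the weak definition~\eqref{eq:intbyparts}) is exactly the expected derivation. The only micro-detail left implicit is that swapping $Q_\d^s(x-y)$ for $Q_\d^s(y-x)$ in the inner integral uses the evenness of the radial kernel $Q_\d^s$, which is immediate from~\eqref{eq:Qdeltas}.
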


The convolution with the kernel $Q^s_\d$ enables us to pass from the nonlocal Sobolev space to the classical one. To go back, we consider the operator from~\cite{CKS23}, given by  
\begin{align}\label{Pdeltas}
\Pcal^s_\d:\Scal(\R^n) \mapsto \Scal(\R^n), \quad \widehat{\Pcal^s_\d\phi} = \frac{\widehat{\phi}}{\widehat{Q^s_\d}}.
\end{align}
It is proven in~\cite[Theorem~2\,$(ii)$]{CKS23} that this operator 
can be extended to the Sobolev space as the inverse of convolution with $Q^s_\d$. 
\color{black}
\begin{lemma}[From local to nonlocal gradients]\label{le:translation2}
Let $s\in (0,1)$, $\delta>0$,  $p \in [1,\infty]$. Then, $\Pcal^s_\d$ in~\eqref{Pdeltas} can be extended to a isomorphism between $W^{1,p}(\R^n)$ and $H^{s,p,\d}(\R^n)$ with satisfies $(\Pcal^s_\d)^{-1}=\Qcal_\delta^s$. In particular, 
\begin{align}\label{translation_formula}
(D_\d^s \circ \Pcal_\d^s) v= D^s_\d (\Pcal^s_\d v) = \nabla v \quad \text{for every $v \in W^{1,p}(\R^n)$. }
\end{align}
\end{lemma}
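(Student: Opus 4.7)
The plan is to work in the Fourier picture and exploit the identification $H^{s,p,\d}(\R^n)=H^{s,p}(\R^n)$ noted earlier, together with the classical equality $W^{1,p}(\R^n)=H^{1,p}(\R^n)$ for $p\in(1,\infty)$. Indeed, the map $\Pcal^s_\d$ is a Fourier multiplier operator with symbol $m(\xi):=1/\widehat{Q^s_\d}(\xi)$, and the claim reduces to establishing mapping properties of this multiplier together with an inversion argument.

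\textbf{Step 1: Boundedness $W^{1,p}(\R^n)\to H^{s,p,\d}(\R^n)$.} Since $H^{s,p,\d}(\R^n)=H^{s,p}(\R^n)$ with equivalent norms, I need to show that for every $v\in W^{1,p}(\R^n)$ one has $(\abss{\,\cdot\,}^s \widehat{\Pcal^s_\d v})^{\vee}\in L^p(\R^n)$. Writing
\begin{equation*}
\abss{\xi}^s\,\widehat{\Pcal^s_\d v}(\xi)
= \underbrace{\frac{\abss{\xi}^s}{\widehat{Q^s_\d}(\xi)\,\abss{\xi}}}_{=:\,M(\xi)}\cdot\abss{\xi}\,\widehat{v}(\xi),
\end{equation*}
I note that $\abss{\xi}\widehat{v}\in (L^p)^{\vee}$ by the Bessel potential characterization of $W^{1,p}$, so it suffices to prove $M$ is an $L^p$-multiplier. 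By Remark~\ref{rem:dsdconv}(b) (second bullet of the Fourier information on $\widehat{Q^s_\d}$), the function $\widehat{Q^s_\d}$ is smooth, positive, and satisfies $\widehat{Q^s_\d}(\xi)-|2\pi\xi|^{-(1-s)}\in \Scal(\R^n)$ for $|\xi|\ge 1$; near the origin $\widehat{Q^s_\d}(0)=\|Q^s_\d\|_{L^1}=1>0$ by~\eqref{int=1}. Consequently, $M$ is smooth, bounded, tends to a nonzero constant as $|\xi|\to\infty$, and standard symbol estimates yield $|\partial^\alpha M(\xi)|\lesssim \abss{\xi}^{-|\alpha|}$ for all multi-indices $\alpha$. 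An application of the Mikhlin–H\"ormander multiplier theorem then gives $L^p$-boundedness of the associated multiplier operator, hence $\|\Pcal^s_\d v\|_{H^{s,p}(\R^n)}\lesssim \|v\|_{W^{1,p}(\R^n)}$.

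\textbf{Step 2: The identity $D^s_\d\Pcal^s_\d v=\nabla v$.} On Schwartz functions this is a direct Fourier computation: using that $D^s_\d$ corresponds to the multiplier $2\pi\ii\xi\,\widehat{Q^s_\d}(\xi)$,
\begin{equation*}
\widehat{D^s_\d \Pcal^s_\d v}(\xi)=2\pi\ii\xi\,\widehat{Q^s_\d}(\xi)\,\frac{\widehat{v}(\xi)}{\widehat{Q^s_\d}(\xi)}
=2\pi\ii\xi\,\widehat{v}(\xi)=\widehat{\nabla v}(\xi).
\end{equation*}
Since Schwartz functions are dense in $W^{1,p}(\R^n)$, Step~1 (continuity of $\Pcal^s_\d$ into $H^{s,p,\d}(\R^n)$, hence the well-definedness of $D^s_\d \Pcal^s_\d v$ in $L^p$) and continuity of $\nabla$ allow one to pass to the limit and extend the identity to all $v\in W^{1,p}(\R^n)$. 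In particular, $\|D^s_\d \Pcal^s_\d v\|_{L^p}=\|\nabla v\|_{L^p}$, which together with Step~1 completes the continuity $\Pcal^s_\d:W^{1,p}(\R^n)\to H^{s,p,\d}(\R^n)$.

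\textbf{Step 3: Inversion.} By Lemma~\ref{le:translation1}, $\Qcal^s_\d:H^{s,p,\d}(\R^n)\to W^{1,p}(\R^n)$ is bounded. On $\Scal(\R^n)$, the Fourier identities $\widehat{\Qcal^s_\d u}=\widehat{Q^s_\d}\widehat{u}$ and $\widehat{\Pcal^s_\d v}=\widehat{v}/\widehat{Q^s_\d}$ give $\Pcal^s_\d\Qcal^s_\d=\id=\Qcal^s_\d\Pcal^s_\d$ pointwise. Density of $\Scal(\R^n)$ in both $W^{1,p}(\R^n)$ and $H^{s,p,\d}(\R^n)$ (the latter by the density result recalled in Section~\ref{sec:sobolevspaces}), together with the continuity of both operators, extends these identities to the full spaces. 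Hence $\Pcal^s_\d$ is an isomorphism with $(\Pcal^s_\d)^{-1}=\Qcal^s_\d$, and~\eqref{translation_formula} is exactly the identity of Step~2.

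\textbf{Main obstacle.} The only delicate point is the $L^p$-multiplier estimate in Step~1; once the smoothness and non-vanishing of $\widehat{Q^s_\d}$ together with the asymptotics $\widehat{Q^s_\d}(\xi)\sim|2\pi\xi|^{-(1-s)}$ at infinity are exploited to verify the Mikhlin-type symbol bounds on $M$, the remainder of the proof is a routine Fourier-theoretic manipulation combined with a density argument.
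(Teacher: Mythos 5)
Your argument is correct for $p\in(1,\infty)$, and in that range it essentially reproduces the Fourier-multiplier machinery the paper itself records in \eqref{comparison:QBessel} and \eqref{eq:psdhom}: your symbol $M$ is exactly the reciprocal multiplier $1/(\abss{\cdot}^{1-s}\widehat{Q}^s_\d)$ appearing there. The genuine gap is the range of $p$. The lemma is asserted for all $p\in[1,\infty]$, and every load-bearing ingredient of your proof fails at one or both endpoints: the Mikhlin--H\"ormander theorem in Step~1 does not hold for $p=1$ or $p=\infty$; the identification $W^{1,p}(\R^n)=H^{1,p}(\R^n)$ used to write $\abss{\xi}\,\widehat v\in (L^p)^{\vee}$ is itself a multiplier statement valid only for $p\in(1,\infty)$; the equality $H^{s,p,\d}(\R^n)=H^{s,p}(\R^n)$ invoked in Step~1 is stated in the paper only for $p\in(1,\infty)$; and the density of $\Scal(\R^n)$ in $W^{1,\infty}(\R^n)$ and in $H^{s,\infty,\d}(\R^n)$, on which Steps~2 and~3 rest, fails for $p=\infty$. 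These endpoints are not decorative: the paper later uses the lemma with $p=\infty$ (in Proposition~\ref{prop:extraregularity} and in the construction of approximate solutions in Section~\ref{sec:diffinclusions}) and with $p\in[1,\infty]$ in Theorem~\ref{th:connecquo} and Proposition~\ref{prop:Nzeroinfinitedim}.

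The paper does not prove the lemma itself but cites \cite[Theorem~2\,(ii)]{CKS23}; the route that reaches all $p\in[1,\infty]$ is the kernel representation \eqref{eq:vsdrepr}, $\Pcal^s_\d v=V^s_\d*\nabla v$, combined with the pointwise properties of $V^s_\d$ (locally integrable near the origin, bounded and explicitly of the form $\sigma_{n-1}^{-1}x/\abs{x}^n$ away from it), so that boundedness follows from convolution estimates that do not distinguish the endpoints, and the inversion $(\Pcal^s_\d)^{-1}=\Qcal^s_\d$ is obtained without appealing to density of Schwartz functions in the target space. To make your proof complete you would need either to restrict the statement to $p\in(1,\infty)$ or to replace Step~1 by such a kernel estimate and rework the density arguments at $p=\infty$ (for instance by duality against $C_c^\infty$ rather than norm approximation).
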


We mention a useful alternative representation of $\Pcal_\d^s$ involving the kernel function of the nonlocal fundamental theorem of calculus in~\cite[Theorem~4.5]{BCM23}. It holds that
\begin{equation}\label{eq:vsdrepr}
\Pcal^s_\d \phi(x) = \int_{\R^n}V^s_\d(x-y) \cdot \nabla \phi(y)\,dy=: (V^s_\d * \nabla \phi) (x)  \quad \text{for $\phi \in C_c^{\infty}(\R^n)$},
\end{equation}
where $V^s_\d \in C^{\infty}(\R^n\setminus\{0\};\R^n)\cap L^1_{\rm loc}(\R^n;\R^n)$ is a vector-radial function, i.e., 
$V^s_\d(x)=xf_\delta^s(\abs{x})$ for $x\in \R^n\setminus\{0\}$ with $f_\delta^s:(0,\infty)\to \R$, 
  cf.~\cite[Remark~4\,d)]{CKS23} as well as~\cite[Theorem~5.9]{BCM23} for more properties of $V_\d^s$.

When $p \in (1,\infty)$, 
one can deduce some more general properties for $\Pcal^s_\d$ using Fourier methods. 
 To this end, we recall that by Remark~\ref{rem:scaling}\,b), there are $R^s_\d, S_\d^s \in \Scal(\R^n)$ such that
\begin{equation}\label{eq:decayR}
\widehat{Q}^s_\d(\xi) = \frac{1}{\abs{2\pi\xi}^{1-s}} + R^s_\d(\xi) \quad \text{and}\quad  \frac{1}{\widehat{Q}^s_\d(\xi)}=\abs{2\pi\xi}^{1-s} + S^s_\d(\xi)  \qquad\text{for $\abs{\xi}\geq 1$}.
\end{equation}
As a consequence of the Mihlin-H\"ormander theorem (e.g., \cite[Theorem~6.2.7]{Gra14a}), using the smoothness and positivity of $\widehat{Q}^s_\d$ locally and the decay of $R^s_\d, S^s_\d$ for large $\xi$ to obtain the desired estimates similarly to \cite[Lemma~8]{CKS23}, it follows that both 
\begin{align}\label{comparison:QBessel}
\abss{\cdot}^{1-s}\widehat{Q}^s_\d \quad \text{and} \quad
 \frac{1}{\abss{\cdot}^{1-s}\widehat{Q}^s_\d} 
\end{align}
are $L^p$-multipliers.
We finally infer from this observation (along with the definition of Bessel potential spaces in \eqref{eq:besselpotspaces}) that
 for $t \geq 1-s$ and $p\in (1, \infty)$,
\begin{equation}\label{eq:psdhom}
\Pcal^s_\d:H^{t,p}(\R^n) \to H^{t-(1-s),p}(\R^n),
\end{equation}
is a isomorphism with inverse  $(\Pcal^s_\d)^{-1}=\Qcal^s_\d$. 

\color{black} Moreover, since the decay of $R^s_\d$ is uniform in $s$, a similar argument as in \cite[Lemma~8]{CKS23} shows that the operator norm of $\Pcal^s_\d$ is uniformly bounded in $s\in (0,1)$; in particular, using \eqref{eq:increasingnorm} and $H^{0,p}=L^p$, there is a $C>0$ independent of $s$ such that
\begin{equation}\label{eq:uniformpbound}
\norm{\Pcal^s_\d v}_{L^p(\R^n)} \leq \norm{\Pcal^s_\d v}_{H^{s,p}(\R^n)} \leq C \norm{v}_{W^{1,p}(\R^n)} \quad \text{for all $v \in W^{1,p}(\R^n)$ and $s \in (0,1)$.}
\end{equation} 

\subsection{Pseudo-differential operators and Dirichlet problems}\label{sec:pseudo} 
The recent existence and uniqueness theory from \cite{Gru22} together with the regularity results in \cite{AbG23} for boundary-value problems involving pseudo-differential operators play an important role for our work. We collect here the statements that we will need, while keeping the presentation accessible, and refer to e.g.,~\cite[Section~2.2]{Gru22} for precise definitions and properties of pseudo-differential operators.

For a suitable pseudo-differential operator $\Pcal$, an open subset $V \subset \R^n$, and a function $g:V \to \R$, the associated Dirichlet problem reads
\begin{align}\label{pseudoprob_general}
\begin{cases}
\Pcal w=g & \text{ on $V$}\\
 w=0& \text{ in $V^c$. }
\end{cases}
\end{align}
By combining the results of  \cite{Gru22, AbG23}, we obtain into the following statement tailored to our needs.

\color{black}
\begin{theorem}[Existence and uniqueness for pseudo-differential Dirichlet problems]
\label{th:grubb}
Let $p\in (1, \infty)$, $a \in (0,1/2)$, $V \subset \R^n$ be an open and bounded set with $C^{1,1}$-boundary, and let $\Pcal$ be a strongly elliptic, even, classical pseudo-differential operator of order $2a$ satisfying
\begin{equation}\label{eq:garding}
\inner{\Pcal\phi,\phi}_{L^2(\R^n)} \geq C\norm{\phi}^2_{H^{a}(\R^n)} \quad \text{for all $\phi \in C_c^{\infty}(\R^n)$}
\end{equation}
with some constant $C>0$. Then, 
there exists for every $g \in L^p(V)$ a unique $w_g \in H^{a,p}_0(V)$ with 
\begin{center}
$\Pcal w_g=g$ \quad in $V$. 
\end{center}
If $p \in (1,\frac{1}{a})$, it even holds that $w_g \in H^{2a,p}_0(V)$ and there is a $c>0$ such that
\[
\norm{w_g}_{H^{2a,p}(\R^n)} \leq c\norm{g}_{L^p(V)} \quad \text{for all $g\in L^p(V)$. }
\]
\end{theorem}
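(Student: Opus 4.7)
The plan is to combine the general $L^p$-solvability theory of pseudo-differential Dirichlet problems from~\cite{Gru22} with the fine regularity theorem of~\cite{AbG23}. The hypotheses on $\Pcal$ — strongly elliptic, even, classical of order $2a\in(0,1)$, and satisfying the G\aa{}rding inequality~\eqref{eq:garding} — are precisely those under which these works develop the solvability theory for~\eqref{pseudoprob_general} in the scale of $a$-transmission (or plain Bessel potential) spaces on a domain with $C^{1,1}$-boundary.

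My first step would be to establish existence and uniqueness in $H^{a,p}_0(V)$. The anchor is the $p=2$ case: the bilinear form $(u,v)\mapsto \langle \Pcal u,v\rangle_{L^2(\R^n)}$ on $H^a_0(V)$ is bounded — since $\Pcal$ has order $2a$, it maps $H^a(\R^n)$ continuously into $H^{-a}(\R^n)$ — and coercive by \eqref{eq:garding}. Hence, for every $g \in L^2(V)\subset H^{-a}(V)$, the Lax-Milgram lemma produces a unique $w_g\in H^a_0(V)$ with $\Pcal w_g=g$ in $V$. To pass from $p=2$ to general $p\in(1,\infty)$, I would invoke the $L^p$-solvability theorem of \cite{Gru22}, which under our hypotheses yields a unique solution $w_g\in H^{a,p}_0(V)$ for each $g\in L^p(V)$. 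Uniqueness in the full $p$-range is consistent with the $p=2$ uniqueness because the embedding $H^{a,p}_0(V)\hookrightarrow H^{a,\min(p,2)}_0(V)$ (valid on a bounded set) places any candidate solution inside the class where the $L^2$-argument applies.

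For the second part, the upgrade from $H^{a,p}_0(V)$ to $H^{2a,p}_0(V)$ in the range $p\in(1,1/a)$ falls exactly into the regime covered by the regularity results of \cite{AbG23}. The threshold $p<1/a$ arises because $2a<1/p$, which is the sharp condition guaranteeing that the $a$-transmission compatibility at $\p V$ collapses to the plain zero Dirichlet condition and no extra boundary-singular factor appears; below this threshold the solution lies in $H^{2a,p}_0(V)$. The quantitative estimate $\norm{w_g}_{H^{2a,p}(\R^n)}\leq c\norm{g}_{L^p(V)}$ is either part of that regularity statement or follows from it via the closed graph theorem applied to the solution operator $L^p(V)\to H^{2a,p}_0(V)$, $g\mapsto w_g$.

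The main technical obstacle is not the analysis itself — that is outsourced to the cited works — but the bookkeeping: one must identify the precise theorems in \cite{Gru22, AbG23}, translate their hypotheses (formulated for strongly elliptic, even, classical pseudo-differential operators of order $2a$ satisfying a coercivity estimate and having the $a$-transmission property on a $C^{1,1}$-domain) into our setting, and verify that $\Pcal$ indeed meets them. The G\aa{}rding inequality~\eqref{eq:garding} supplies the coercivity ingredient, strong ellipticity and evenness encode the transmission structure, and the $C^{1,1}$-regularity of $\partial V$ provides the geometric setting required for the cited results. With these identifications in place, the two cited theorems yield the two parts of the statement directly.
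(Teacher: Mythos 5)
Your proposal is correct and follows essentially the same route as the paper: both parts are outsourced to the cited works — existence and uniqueness in $H^{a,p}_0(V)$ from the $L^p$-solvability/bijectivity theorems of \cite{Gru22} (the paper phrases this as bijectivity of the restricted operator $\Pcal_V$ on its natural domain), the upgrade to $H^{2a,p}_0(V)$ for $p\in(1,\frac1a)$ from the identification of that domain with the $a$-transmission space in \cite{AbG23}, and the norm estimate from the bounded-inverse/closed-graph theorem; your Lax--Milgram anchor at $p=2$ is a harmless addition (the paper uses the same idea only in its alternative proof of the later lemma). One small slip worth fixing: the threshold $p<\frac1a$ corresponds to $a<\frac1p$ (so that the boundary factor $\dist(\cdot,\partial V)^{-a}$ is $p$-integrable), not to $2a<\frac1p$ as you write, and your consistency remark that any $H^{a,p}_0(V)$-solution falls into the $L^2$ class only works for $p\geq 2$ — neither point affects the argument, since uniqueness is taken from \cite{Gru22} in any case.
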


\begin{proof} We define the operator $\Pcal_V$ with domain 
\[
\text{dom}(\Pcal_V):=\{w \in H^{a,p}_0(V)\,:\, (\Pcal w)|_V \in L^p(V)\}
\]
via restriction of $\Pcal$ to $V$, i.e., $\Pcal_Vw=(\Pcal w)|_V$ for $w\in \text{dom}(\Pcal_V)$. 
Due to \eqref{eq:garding}, we may apply \cite[Theorem~4.2]{Gru22} with $\beta=0$, and then also \cite[Theorem~4.16~2$^\circ$]{Gru22}, to deduce that $\Pcal_V:\text{dom}(\Pcal_V)\to L^p(V)$ is a bijection. \color{black} This shows the first part of the statement. 

For the case $p\in (1, \frac{1}{a})$, we note that in \cite{AbG23} (see also~\cite[Theorem~3.2]{Gru22}), the domain $\text{dom}(\Pcal_V)$ has been characterized as the so-called $a$-transmission space, which agrees with $H^{2a,p}_0(V)$ when $p \in (1,\frac{1}{a})$ (cf.~\cite[Eq.~(2.20)]{Gru22}). \color{black} Consequently, $\Pcal_V:H^{2a,p}_0(V) \to L^p(V)$ is a bijective bounded linear operator. In particular, it is invertible with bounded inverse, which implies the full statement.
\end{proof}

\color{black}
\begin{remark}\label{rem:grubbth}
a) We note that connectedness is not part of the definition of a domain in~\cite{Gru22, AbG23}, in contrast to our definition, and hence, Theorem~\ref{th:grubb} is valid for non-connected sets $V$ as well. Moreover, the regularity of the domain $V$ in Theorem~\ref{th:grubb} can even be reduced to $V \subset \R^n$ that have $C^{1,\tau}$-boundaries with $\tau\in (2a, 1)$, see~\cite{Gru22, AbG23}. Only for simplicity of the presentation, we work here with a stronger assumption. 
\smallskip

\color{black} b) Note that the range of $p$ such that $w_g$ lies in $H^{2a,p}_0(V)$ is sharp, which is due to the fact that the solutions to the pseudo-differential problem in~\eqref{pseudoprob_general} contain a factor of $\dist(\cdot,\partial V)^{a}$ near the boundary. To give a precise example, we can take any smooth, bounded domain $V$ and any function $w:\R^n \to \R$ that is smooth in $V$, equal to $\dist(\cdot,\partial V)^{a}$ near the boundary $\partial V$ and zero in $V^c$. It follows then by \cite[Theorem~4]{Gru15} that $(\Pcal w) |_{V} \in C^{\infty}(\R^n)|_{V}$, which implies, in particular, that $w\in H^{a, p}_0(V)$ is a solution to \eqref{pseudoprob_general} with a smooth right-hand side. However, we have that $w\dist(\cdot,\partial V)^{-2a}$ is equal to $\dist(\cdot,\partial V)^{-a}$ near the boundary of $V$, which is not in $L^p(V)$ for $p \geq \frac{1}{a}$, so that $w \not \in H^{2a,p}_0(V)$ in view of the Hardy-type inequality~\cite[Proposition~5.7]{Tri01}.
\end{remark}

The translation operator $\Pcal^s_\d$ from the previous section is in fact a pseudo-differential operator that fits exactly into the abstract framework of Theorem~\ref{th:grubb}, which is the content of the following lemma. This observation will be crucial for our characterization result of $\Nzero$ (cf.~Theorem~\ref{th:Ncalchar} and Lemma~\ref{le:solpdo2}).

\color{black}
\begin{lemma}[$\Pcal_\d^s$ as pseudo-differential operator]\label{lem:pdoproperties}
The operator $\Pcal^s_\d$ defined in~\eqref{Pdeltas} is a strongly elliptic, even classical  pseudo-differential operator of order $1-s$ and there is a $C>0$ such that
\begin{align*}
\inner{\Pcal^s_\d \phi,\phi}_{L^2(\R^n)} \geq C \norm{\phi}^2_{H^{\frac{1-s}{2}}(\R^n)} \quad \text{for all $\phi \in C_c^{\infty}(\R^n)$.}
\end{align*}
\end{lemma}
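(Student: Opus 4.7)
The operator $\Pcal^s_\d$ is a Fourier multiplier with symbol $p(\xi):=1/\widehat{Q^s_\d}(\xi)$, which is independent of $x$. The entire analysis will therefore reduce to quantitative statements about $p$, using three ingredients already provided in the excerpt: the smoothness, radial symmetry, and positivity of $\widehat{Q^s_\d}$ (Remark~\ref{rem:scaling}\,b)), the normalization $\widehat{Q^s_\d}(0)=\|Q^s_\d\|_{L^1}=1$ from \eqref{int=1}, and the asymptotic expansion
\[
\frac{1}{\widehat{Q^s_\d}(\xi)} = \abs{2\pi\xi}^{1-s} + S^s_\d(\xi) \qquad \text{for $|\xi|\geq 1$,}
\]
with $S^s_\d \in \Scal(\R^n)$, cf.~\eqref{eq:decayR}.

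First, I would verify that $\Pcal^s_\d$ is a classical pseudo-differential operator of order $1-s$. For the global symbol estimates $|\partial^\alpha_\xi p(\xi)|\leq C_\alpha \abss{\xi}^{1-s-|\alpha|}$, I split into two regions. On $|\xi|\leq 2$, $p$ is smooth (since $\widehat{Q^s_\d}$ is smooth and strictly positive), so each $\partial^\alpha p$ is bounded there, and the bound holds because $\abss{\xi}^{1-s-|\alpha|}$ is bounded below by a positive constant on this compact set. For $|\xi|\geq 1$, the estimate follows by differentiating $|2\pi\xi|^{1-s}+S^s_\d(\xi)$ term by term, since $|2\pi\xi|^{1-s}$ is a classical homogeneous symbol of order $1-s$ and $S^s_\d\in\Scal$ contributes arbitrarily fast decay. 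To obtain the classical poly-homogeneous structure, I would fix a smooth cut-off $\chi$ with $\chi=0$ near $0$ and $\chi=1$ for $|\xi|\geq 1$, and write $p(\xi)=\chi(\xi)|2\pi\xi|^{1-s}+\big(p(\xi)-\chi(\xi)|2\pi\xi|^{1-s}\big)$. The second summand coincides with $\chi(\xi)S^s_\d(\xi)$ for $|\xi|\geq 1$ and is smooth and compactly supported plus Schwartz, hence an element of $S^{-\infty}$. This identifies the principal symbol as $p_{1-s}(\xi)=|2\pi\xi|^{1-s}$, with all lower-order homogeneous components vanishing.

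Evenness is immediate: since $w_\d$ is radial, $Q^s_\d$ is radial, so $\widehat{Q^s_\d}$ is radial, and therefore $p(-\xi)=p(\xi)$. Strong ellipticity follows because the principal symbol $p_{1-s}(\xi)=|2\pi\xi|^{1-s}$ is real and satisfies $\Re p_{1-s}(\xi)\geq c|\xi|^{1-s}$ for $\xi\neq 0$.

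For the Gårding-type estimate, I would use Plancherel's theorem: since $\Pcal^s_\d$ is a Fourier multiplier with real and positive symbol,
\[
\inner{\Pcal^s_\d \phi,\phi}_{L^2(\R^n)} = \int_{\R^n} \frac{|\widehat{\phi}(\xi)|^2}{\widehat{Q^s_\d}(\xi)}\,d\xi.
\]
The result therefore reduces to the pointwise bound $\widehat{Q^s_\d}(\xi)\leq C\abss{\xi}^{-(1-s)}$ for some $C>0$, which gives $1/\widehat{Q^s_\d}(\xi)\geq C^{-1}\abss{\xi}^{1-s}$ and hence the claim by the definition of the $H^{(1-s)/2}$-norm. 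To verify this bound, I would split again into $|\xi|\leq 1$ and $|\xi|\geq 1$. On the compact set $|\xi|\leq 1$, $\widehat{Q^s_\d}$ is continuous with $\widehat{Q^s_\d}(0)=1$, so bounded above, while $\abss{\xi}^{-(1-s)}$ is bounded below by a positive constant. For $|\xi|\geq 1$, the identity $\widehat{Q^s_\d}(\xi)=|2\pi\xi|^{-(1-s)}+R^s_\d(\xi)$ with $R^s_\d\in\Scal$ immediately yields the required upper bound, since $|2\pi\xi|^{-(1-s)}\lesssim \abss{\xi}^{-(1-s)}$ for $|\xi|\geq 1$ and Schwartz decay dominates.

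The main (essentially the only) subtlety in this plan is the verification of the symbol estimates uniformly across the transition between the compact low-frequency regime, where smoothness and positivity of $\widehat{Q^s_\d}$ are used, and the high-frequency regime, where the asymptotic decomposition into $|2\pi\xi|^{1-s}$ plus a Schwartz remainder does the work. Everything else is a direct consequence of the properties of $\widehat{Q^s_\d}$ already collected in Section~\ref{subsec:translation}.
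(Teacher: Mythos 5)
Your proposal is correct and follows essentially the same route as the paper's proof: symbol estimates for $1/\widehat{Q}^s_\d$ split between a compact low-frequency region (using smoothness and positivity of $\widehat{Q}^s_\d$) and the high-frequency region (using the decomposition \eqref{eq:decayR} into $|2\pi\xi|^{1-s}$ plus a Schwartz remainder), evenness and strong ellipticity read off from the principal symbol, and the G\aa rding inequality via Plancherel from the pointwise bound $1/\widehat{Q}^s_\d \geq C\abss{\cdot}^{1-s}$. The only cosmetic differences are your explicit cut-off $\chi$ in place of the paper's smooth extension $p_0$ and your derivation of the lower bound on $1/\widehat{Q}^s_\d$ via an upper bound on $\widehat{Q}^s_\d$.
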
 
\begin{proof} 
The properties can be deduced from the fact that the symbol of $\Pcal^{s}_\d$ is smooth, radial and positive, and for large frequencies only differs from the symbol of the fractional Laplacian $(-\Delta)^{\frac{1-s}{2}}$ up to a Schwartz function (see~\eqref{eq:decayR}). For the readers' convenience, we work out the details below, referring to~\cite[Section~2.2]{Gru22} for the precise definitions of the properties of pseudo-differential operators.

It is easy to check in light of \eqref{eq:decayR} that for any $\alpha \in \N_0^n$,
\[
\abslr{\partial^{\alpha} \bigl(1/\widehat{Q}^s_\d\bigr)} \leq C_{\alpha} \abss{\cdot}^{1-s-\abs{\alpha}},
\]
which means that $\Pcal^s_\d$ has order $1-s$. Defining $p_0:\R^n \to \R$ to be a smooth function with $p_0(\xi) = \abs{2\pi\xi}^{1-s}$ for $\abs{\xi} \geq 1$, we obtain again from \eqref{eq:decayR} that
\[
\abslr{\partial^{\alpha} \bigl(1/\widehat{Q}^s_\d-p_0\bigr)} \leq C_{\alpha}\abss{\cdot}^{1-s-\abs{\alpha}-J},
\]
for any $J \in \N_0$. This means that $\Pcal^s_\d$ is classical (where in the expansion $p_j=0$ for $j \geq 1$) and, since $p_0(-\xi)=p_0(\xi)$ for $\abs{\xi}\geq 1$, $\Pcal^s_\d$ is even. Finally, because $p_0(\xi) \geq C\abs{\xi}^{1-s}$ for $\abs{\xi}\geq 1$, the operator $\Pcal^s_\d$ is strongly elliptic, and since $1/\widehat{Q}^s_\d \geq C\abss{\cdot}^{1-s}$, we have by the Plancherel identity that  
\[
\inner{\Pcal^s_\d \phi,\phi}_{L^2(\R^n)} =  \inner{\widehat{\phi}/\widehat{Q}_\d^s,\widehat{\phi}}_{L^2(\R^n)} \geq C  \norm{\langle\cdot \rangle^{\frac{1-s}{2}}\widehat{\varphi}}_{L^2(\R^n)} = C\norm{\phi}_{H^{\frac{1-s}{2}}(\R^n)}
\]
for all $\phi \in C_c^{\infty}(\R^n)$, which finishes the proof.
\end{proof}

\section{Discussion and characterization of functions with zero nonlocal gradient }\label{sec:characterization_nonlocalgradfree}
This section revolves around the study of the functions in the nonlocal Sobolev space with vanishing finite-horizon fractional gradient. Our analysis  examines different facets of the set 
\begin{align*}
\Nzero=\{h \in H^{s,p,\d}(\Omega)\,:\, D^s_\d h =0 \ \text{a.e.~in $\Omega$}\}, 
\end{align*}
including the existence and construction of non-trivial functions, characterization results, a discussion of regularity properties, and illustrative examples. Throughout Sections~\ref{sec:characterization_nonlocalgradfree}-\ref{sec:neumann}, $\Omega$ is assumed to be a bounded Lipschitz domain, unless mentioned otherwise.

\subsection{Non-constant elements of $\Nzero$}\label{subsec:nonconstant}

In contrast to the functions with zero classical gradient, the set $\Nzero$ encompasses strictly more than constant functions. In fact, one can find functions in $\Nzero$ that are non-constant on any subset of $\Omega$, which is the content of the following proposition.

\begin{proposition}[Existence of non-constant functions in $\Nzero$]\label{prop:nonconstant}
Let 
$p\in [1, \infty]$. It holds for any open, non-empty $U\subset \Omega$ that 
\[
\Nzero \not \subset \{u \in H^{s, p, \delta}(\Omega)\,:\, u \ \text{is constant a.e.~on $U$}\}.
\]
\end{proposition}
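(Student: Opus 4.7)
The plan is to argue by contradiction, exploiting the translation operator $\Pcal^s_\d$ from Lemma~\ref{le:translation2} to produce a rich family of candidates in $\Nzero$ and then using duality, real-analyticity, and the Paley--Wiener theorem to reach an absurdity.

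For any $v \in C_c^\infty((\Omega_\d)^c)$, Lemma~\ref{le:translation2} yields $h_v := \Pcal^s_\d v \in H^{s,p,\d}(\R^n)$ with $D^s_\d h_v = \nabla v$; since $v$ vanishes on $\Omega_\d$ we have $\nabla v = 0$ there, hence $h_v|_{\Omega_\d} \in \Nzero$. Assume now, for contradiction, that every element of $\Nzero$ is constant a.e.~on $U$ and pick $\phi \in C_c^\infty(U)$ with $\int_U \phi \,dx = 0$ and $\phi\not\equiv 0$. Then $\int_{\Omega_\d} h_v \phi \,dx = 0$ is forced for every admissible $v$. Since $\Pcal^s_\d$ has real, even Fourier symbol $1/\widehat{Q^s_\d}$ and is therefore $L^2$-self-adjoint, this yields $\int v\,(\Pcal^s_\d \phi)\,dx = 0$ for all $v \in C_c^\infty((\Omega_\d)^c)$, i.e., $\Pcal^s_\d \phi = 0$ on $(\Omega_\d)^c$.

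Next, I would propagate this vanishing by real-analyticity. Using \eqref{eq:decayR}, write $\Pcal^s_\d = (-\Delta)^{(1-s)/2} + T$ with $T$ a smoothing operator; its convolution kernel is then the sum of the real-analytic Riesz kernel $c|x|^{-n-1+s}$ and a Schwartz function, and hence is real-analytic on $\R^n \setminus \{0\}$. The formula $\Pcal^s_\d \phi(x) = \int k(x-y)\phi(y)\,dy$ therefore makes $\Pcal^s_\d \phi$ real-analytic on $\R^n \setminus \overline U$, and its vanishing on the open set $(\Omega_\d)^c$ extends by analytic continuation to vanishing on the entire connected complement $\R^n \setminus \overline U$ (for $n\geq 2$), or on each unbounded component of $\R\setminus\overline U$ (for $n=1$, both of which meet $(\Omega_\d)^c$). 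Either way, $\Pcal^s_\d \phi$ is compactly supported.

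Since both $\phi$ and $\Pcal^s_\d\phi$ are now compactly supported, the Paley--Wiener theorem makes $\widehat\phi$ and $\widehat\phi/\widehat{Q^s_\d} = \widehat{\Pcal^s_\d\phi}$ entire functions of exponential type. But $\widehat{Q^s_\d}$ is a non-trivial entire function of positive exponential type on $\C^n$ (it has no exponential-affine form since $Q^s_\d$ is not a point mass), so by Hadamard's factorisation it has zeros on $\C^n$. Choosing $\phi$ so that $\int\phi = 0$ and $\widehat\phi$ does not vanish at a prescribed such zero --- compatible with the two codimension-one constraints in the infinite-dimensional space $C_c^\infty(U)$ --- forces $\widehat\phi/\widehat{Q^s_\d}$ to have a pole and fail to be entire, the desired contradiction.

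I expect the main obstacle to lie in the analytic propagation step: one must verify that the convolution kernel of $\Pcal^s_\d$ is not merely smooth but real-analytic off the diagonal, and in the final step pin down a concrete $\phi \in C_c^\infty(U)$ whose Fourier transform satisfies both the mean-zero condition and nonvanishing at a prescribed complex zero of $\widehat{Q^s_\d}$.
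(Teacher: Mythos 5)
Your overall strategy --- dualizing the constancy hypothesis against mean-zero test functions $\phi\in C_c^\infty(U)$ to force $\Pcal^s_\d\phi$ to vanish on a large exterior set, and then playing Paley--Wiener off against the complex zeros of $\widehat{Q}^s_\d$ --- is genuinely different from the paper's. The paper instead tests with $\varphi\in C_c^\infty(\overline{\Omega}^c)$ to show that $\Div V^s_\d$ is locally constant outside a large ball, solves the resulting ODE for the vector-radial profile to pin down $V^s_\d(x)=\sigma_{n-1}^{-1}x\abs{x}^{-n}$ there, and only then applies Paley--Wiener (to $V^s_\d-\sigma_{n-1}^{-1}x\abs{x}^{-n}$) and Hadamard. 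Your route replaces that entire kernel analysis by one soft duality step, and, once repaired as below, it is correct and arguably shorter.

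The genuine gap is the real-analyticity claim. Writing the kernel of $\Pcal^s_\d$ as (real-analytic Riesz kernel) $+$ (Schwartz function) does not make it real-analytic off the origin: Schwartz functions are smooth but in general nowhere real-analytic, and \eqref{eq:decayR} only gives pointwise symbol information, not the analytic-symbol estimates that an analytic-continuation argument for $\Pcal^s_\d\phi$ would require. (The paper only ever uses that $\Div V^s_\d$ is $C^\infty$ away from $0$.) Fortunately the step is superfluous: once $\Pcal^s_\d\phi=0$ on $\R^n\setminus\overline{\Omega_\d}$, you already have $\supp(\Pcal^s_\d\phi)\subset\overline{\Omega_\d}$, which is compact because $\Omega$ is bounded. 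There is nothing to propagate --- $\Pcal^s_\d\phi$ is a compactly supported Schwartz function, so $\widehat{\Pcal^s_\d\phi}$ is entire of exponential type, the identity $\widehat{\Pcal^s_\d\phi}\cdot\widehat{Q}^s_\d=\widehat{\phi}$ extends from $\R^n$ to $\C^n$, and evaluating at a complex zero $\zeta_0$ of $\widehat{Q}^s_\d$ with $\widehat{\phi}(\zeta_0)\neq 0$ gives the contradiction. This also makes moot the incorrect side remark that $\R^n\setminus\overline{U}$ is connected for $n\geq 2$ (it need not be).

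Two smaller points to tighten. The existence of a complex zero of $\widehat{Q}^s_\d$ should be argued on a one-variable section exactly as in the paper's Step~3: $z\mapsto\widehat{Q}^s_\d(ze_1)$ is entire of order at most one, even and non-constant, hence cannot be of the form $e^{az+b}$ and so must vanish somewhere; ``not a point mass'' is not by itself the right dichotomy. And the duality step uses the bilinear identity $\int(\Pcal^s_\d v)\phi\,dx=\int v\,(\Pcal^s_\d\phi)\,dx$, which holds because the symbol $1/\widehat{Q}^s_\d$ is even as well as real; the required real-valued $\phi\in C_c^\infty(U)$ with $\widehat{\phi}(0)=0$ and $\widehat{\phi}(\zeta_0)\neq 0$ exists because $1$ and $e^{-2\pi i x\cdot\zeta_0}$ are linearly independent on the open set $U$ (note $\zeta_0\neq 0$ since $\widehat{Q}^s_\d(0)=1$).
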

\begin{proof} 
Suppose to the contrary that every $u\in \Nzero$ is constant a.e.~on $U$. The reasoning that will lead to the desired contradiction is organized in three steps.
\smallskip

\textit{Step~1: Representation of $V_\delta^s$ away from the origin.} We deduce from  the above assumption that the kernel $V_\delta^s$ in~\eqref{eq:vsdrepr} then has to satisfy
\begin{align}\label{contra_Vdeltas}
\text{$V^{s}_\d(x) = \frac{1}{\sigma_{n-1}}\frac{x}{\abs{x}^n}$ \quad  for all $x \in \overline{B_{\rho}(0)}^c$,}
\end{align} 
where $\rho=\mathrm{Diam}(\Omega)$ is the diameter of $\Omega$ and $\sigma_{n-1}$ denotes the surface area of the unit sphere in $\R^n$.\footnote{Note that the function $x\mapsto \frac{1}{\sigma_{n-1}} \frac{x}{|x|^n}$ for $x\in \R^n$ corresponds exactly to the kernel function appearing in the fundamental theorem of calculus for the classical gradient, see~\cite[Proposition~4.14]{Pon16}.}
To see~this, we split the argument in three sub-steps, showing first that $\Div V^s_\d$ is constant outside of $\overline{B_\rho(0)}$. Next, we exploit the radiality of $V_\delta^s$ (cf.~\ref{itm:h1}) and its boundedness away from the origin, which yields~a representation of $V_\delta^s$ on $\overline{B_\rho(0)}^c$ up to constants. The latter are then determined explicitly in the final step.\smallskip

\textit{Step 1a.} For every $\varphi \in C_c^{\infty}(\overline{\Omega}^c)$, we infer from~\eqref{translation_formula} that $D^s_\d (\Pcal_\delta^s \varphi)= \nabla \varphi = 0$ in $\Omega$, and hence, $\Pcal_\delta^s\varphi\in \Nzero$. By our initial assumption, $\Pcal_\delta^s \varphi$ is then constant on $U$, which, in view of  the identity $\Pcal_\delta^s \varphi = V_\delta^s\ast \nabla \varphi$ in~ \eqref{eq:vsdrepr}, is equivalent to
\[
\int_{\R^n} \bigl(V^s_\d(x-z)-V^s_\d(y-z)\bigr)\cdot \nabla \varphi(z)\,dz =0 \quad \text{for all $x,y \in U$}.
\]
Since this holds for any $\varphi \in C_c^{\infty}(\overline{\Omega}^c)$, the fundamental lemma of the calculus of variations in combination with integration by parts allows us to deduce
\begin{align}\label{eq87}
\Div V^s_\d(x-z) = \Div V^s_\d (y-z) \quad \text{for all $x,y \in U$ and all $z \in \overline{\Omega}^c$.}
\end{align}

Let us fix $x\in U$ and consider $w \in \R^n$ with $\abs{w} > \rho = \mathrm{Diam}(\Omega)$. It follows then that $x-w \in \overline{\Omega}^c$, and we obtain with $z=x-w \in \overline{ \Omega}^c$ that
\[
\Div V^s_\d(w) = \Div V^s_\d(w+y-x)
\]
for all $y \in U$. Taking $y \in B_{\epsilon}(x) \subset U$, with $\eps>0$ sufficiently small, yields
\[
\Div V^s_\d(w) = \Div V^{s}_\d(w') \quad \text{for all $w' \in B_{\epsilon}(w)$}.
\]
For $n>1$, this shows that the divergence of $V^s_\d$ is locally constant on the connected set $\overline{B_{\rho}(0)}^c$, and thus, constant outside of $\overline{B_{\rho}(0)}$; the case $n=1$ yields the same conclusion by also utilizing the vector-radiality of $V^s_\d$.  \smallskip

\textit{Step 1b.} 
 Recall that $V_\delta^s$ is smooth on $\R^n\setminus \{0\}$ and vector-radial, meaning that there is a smooth function $f_\delta^s:(0,\infty) \to \R$ with $V^s_\d(x) = x f_\delta^s(\abs{x})$. 
We can then rewrite the divergence of $V_\delta^s$ as
\[
\Div V^s_\d(x) = n f_\delta^s(\abs{x})+\abs{x}(f_\delta^s)'(\abs{x})
\] 
for $x\in \R^n \setminus\{0\}$.
Since this expression is constant on the complement of $\overline{B_{\rho}(0)}$ by Step~1, the auxiliary function $f_\delta^s$ satisfies for all $r> \rho$ the equation
\begin{align}\label{dgl}
n f_\delta^s(r) + r(f_\delta^s)'(r) = c
\end{align}
with some constant $c \in \R$. The family of solutions to the ordinary differential equation~\eqref{dgl} is given by $r\mapsto\frac{c}{n}+\frac{k}{r^n}$ with $k \in \R$. Consequently, there is a $k \in \R$ such that
\begin{align*}
V^s_\d(x) =c \frac{x}{n} + k\frac{x}{\abs{x}^n} \quad \text{for all $x \in \overline{B_{\rho}(0)}^c$}.
\end{align*}

\textit{Step~1c.} The boundedness of $V^{s}_\d$ on $\overline{B_{\rho}(0)}^c$ according to \cite[Theorem~5.9\,$b)$]{BCM23} implies $c=0$. 
To determine $k$, consider the compactly supported, integrable function 
\begin{align*}
 x\mapsto V^s_\d(x)-k\frac{x}{\abs{x}^n},  
\end{align*}
whose Fourier transform is continuous and can be calculated to be 
\begin{align}\label{aux11}
\xi\mapsto \frac{-i\xi }{2\pi |\xi|^2}\Bigl(\frac{1}{\widehat{Q}^s_\d(\xi)}-k\sigma_{n-1}\Bigr), 
\end{align}
see~\cite[Theorem~5.9]{BCM23}.  
 As the first factor in~\eqref{aux11} 
 has a pole at the origin, the second factor needs to vanish in $0$ because of continuity. Due to $\widehat{Q_\delta^s}(0)=\norm{Q_\delta^s}_{L^1(\R^n)}=1$, this eventually yields $k=\sigma_{n-1}^{-1}$, confirming~\eqref{contra_Vdeltas}.\medskip

\textit{Step~2: Entire extension of $\widehat{Q}_\delta^s$ is zero-free.} Let us introduce the auxiliary function $Z_\delta^s\in  C^{\infty}(\R^n\setminus\{0\};\R^n)\cap L^1(\R^n;\R^n)$ defined by
\begin{align*}
Z_\delta^s(x) = V^s_\d(x)-\frac{1}{\sigma_{n-1}}\frac{x}{\abs{x}^n}.
\end{align*}
As $Z_\delta^s$ has compact support owing to~Step~1, the Paley-Wiener theorem (see e.g.,~\cite[Theorem~2.3.21]{Gra14a}) implies that the Fourier transform $\widehat{Z}_\delta^s$ with
\begin{align}\label{FourierZ}
\widehat{Z}_\delta^s(\xi) = \frac{-i\xi }{2\pi |\xi|^2}\Bigl(\frac{1}{\widehat{Q}^s_\d(\xi)}-1\Bigr) \quad \text{for $\xi\in \R^n\setminus\{0\}$}. 
\end{align} 
 is real analytic and allows for a unique entire extension to a function $\C^n\to \C^n$. An analogous argument gives that the Fourier transform of the kernel function $Q_\delta^s$ (see~\eqref{eq:Qdeltas} and recall $\supp (Q_\delta^s)\subset B_\delta(0)$) is extendable (uniquely) to a holomorphic function $\C^n\to \C$. In the following, we write $\widehat{Z}_\delta^s$ and $\widehat{Q}_\delta^s$ for both the Fourier transforms of $Z_\delta^s$ and $Q_\delta^s$, as well as for their extended versions defined on $\C^n$. 

The goal in this step is to show that
\begin{align}\label{zerofree}
\text{$\widehat{Q}_\delta^s:\C^n\to \C$ is zero-free. }
\end{align}
Suppose for the sake of contradiction that this is not the case, and let $\zeta_0 \in \C^{n}\setminus\{0\}$ be a zero of $\widehat{Q}^s_\d$ with minimal norm $r:=\abs{\zeta_0}$; note that $\widehat{Q}_\d^s(0)=\norm{Q_\d^s}_{L^1(\R^n)}=1$. Applying the identity theorem of complex analysis, in each variable separately, to $\widehat{Z}^s_\d$ as in~\eqref{FourierZ} yields that
\begin{align*}
\widehat{Z}_\delta^s(\zeta) = \frac{-i\zeta }{2\pi |\zeta|^2}\Bigl(\frac{1}{\widehat{Q}^s_\d(\zeta)}-1\Bigr) \quad \text{for $\zeta\in \C^n \setminus \{0\}$ with $\abs{\zeta}<r$}.
\end{align*} 
We now find that $\lim_{r' \uparrow 1} |\widehat{Z}_\delta^s(r'\zeta_0)| = \infty$, which contradicts the continuity of the holomorphic extension of $\widehat{Z}_\delta^s$.  Thus,~\eqref{zerofree} is proven. \smallskip

\textit{Step~3: Section of $\widehat{Q}_\delta^s$ coincides with exponential of a polynomial.} Consider $q_\delta^s: \C\to \C,  z\mapsto \widehat{Q_\delta^s}(ze_1)$ with  $e_1=(1, 0, \ldots, 0)\in \C^n$. From $\norm{Q^s_\d}_{L^1(\R^n)}=1$ and $\supp(Q^s_\d) \subset B_\d(0)$, we conclude that
\[
\absb{q_\delta^s(z)}=\absb{\widehat{Q}^s_\d(ze_1)} \leq \int_{\R^n} \abs{Q^s_\d(x)}\abs{e^{-2\pi i x_1 z}}\,dx \leq e^{2\pi \delta \abs{z}}
\]
for all $z\in \C$, 
showing that $q_\delta^s$ is an entire function of order at most $1$. As a consequence of Step~2, this function is never zero, so that the Hadamard factorization theorem (see e.g.,~\cite[Corollary~XII.3.3]{Lan99}) yields that
\begin{align*}
q_\delta^s(z)=e^{az+b} \quad \text{ for all $z\in \C$ with $a,b \in \C$.}
\end{align*} However, this contradicts the fact that $q_\delta^s: z \mapsto \widehat{Q}^s_\d(ze_1)$ is non-constant and even (cf.~Section~\ref{subsec:translation}), as the section of the Fourier transform of the radial kernel $Q_\delta^s$,  which proves the statement.
\end{proof}

\color{black}

We point out that the previous result is not true when $\Omega=\R^n$. In fact, $\NzeroRn$ for $p<\infty$ contains only the zero function, which can be deduced with the help of the translation operators of Section~\ref{subsec:translation} as follows: Let $u\in \NzeroRn$, then $v:=\Qcal_\delta^s u\in W^{1,p}(\R^n)$ satisfies $\nabla v=D_\delta^s u = 0$, and hence, $v=0$, so that $u=\Pcal_\delta^s (\Qcal_\delta^s u) = \Pcal_\delta^s v = 0$. A similar argument for $p=\infty$, shows that $N^{s,\infty,\d}(\R^n)$ only consists of constant functions. Nevertheless, there are unbounded sets $\Omega\subset \R^n$ for which $\Nzero$ is non-trivial.

\begin{remark}[Generalization to unbounded sets] 
Proposition~\ref{prop:nonconstant} holds more generally for 
open sets $\Omega\subset \R^n$ such that $\overline{\Omega}^c$ contains the trace of an unbounded continuous curve $\gamma:[0,\infty)\to \R^n$.

The proof can easily be adjusted, with only a minor modification in Step~1a. After showing~\eqref{eq87} as above, let us fix $x\in U$ and consider $w=x-z\in \R^n$ for some  $z\in \gamma([0, \infty))$.
It follows then from \eqref{eq87} that $\diverg V_\delta^s(w) = \diverg V_\delta^s(w')$ for all $w'\in B_\eps(w)$ with $\eps>0$ such that $B_\eps(x)\subset U$. By applying this for all $z \in \gamma([0,\infty))$ and exploiting the radial symmetry of the divergence of $V_\delta^s$, we find that $\diverg V_\delta^s$ is constant in the complement of $\overline{B_\rho(0)}$ with $\rho:=\dist(x,\gamma([0,\infty)))$.
\end{remark}

 The following result confirms that there exist, in fact, a great many functions with vanishing nonlocal gradients, by showing that they form an infinite-dimensional space. 

\begin{proposition}[$\Nzero$ is infinite-dimensional]\label{prop:Nzeroinfinitedim}
Let $p\in [1, \infty]$, then $\Nzero$ is an infinite-dimensional closed subspace of $H^{s,p, \delta}(\Omega)$. 
\end{proposition}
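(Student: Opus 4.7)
The linearity of $\Nzero$ is immediate, and its closedness follows because $\Nzero$ is the kernel of the bounded linear operator $D^s_\d\colon H^{s,p,\d}(\Omega)\to L^p(\Omega;\R^n)$ (which is continuous directly from Definition~\ref{def:Hspd}). The substantive content is the infinite-dimensionality, and this is what I will focus on.

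My plan is to construct an injective linear map from $L^p(\Gamma_\d)$ (or a suitable infinite-dimensional subspace thereof) into $\Nzero$ by assigning to each boundary datum $g$ a function $h_g\in\Nzero$ with $h_g=g$ on $\Gamma_\d$. Since $L^p(\Gamma_\d)$ is infinite-dimensional, any such injection closes the proof. To produce $h_g$, I would use the translation identity $D^s_\d h = \nabla(Q^s_\d * h)$ from Lemma~\ref{le:translation1} together with the connectedness of $\Omega$, which reduces the condition $D^s_\d h_g = 0$ on $\Omega$ to requiring $(Q^s_\d * h_g)|_\Omega = c$ for some constant $c\in\R$. Writing $h_g=g$ on $\Gamma_\d$ and $h_g=u_g$ on $\Omega$, the task becomes to solve the nonlocal equation
\[
\int_{\Omega_\d} Q^s_\d(x-y)\,h_g(y)\,dy = c \qquad\text{for a.e.~}x\in\Omega,
\]
for a suitable $u_g\in L^p(\Omega)$ and $c\in\R$.

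The main obstacle is the solvability of this equation, and the plan is to recast it, via Lemma~\ref{le:translation2}, as a Dirichlet-type problem for the pseudo-differential operator $\Pcal^s_\d$. Since Lemma~\ref{lem:pdoproperties} verifies the hypotheses of Theorem~\ref{th:grubb} for $\Pcal^s_\d$, the existence-uniqueness theory of Grubb and Abels-Grubb yields a unique solution in an appropriate Bessel potential space, at least in the regime $p\in(1,\tfrac{2}{1-s})$ where Theorem~\ref{th:grubb} provides the integrability needed. In fact, this construction is precisely the content of the characterization result of Section~\ref{subsec:characterization}, so the cleanest way to present Proposition~\ref{prop:Nzeroinfinitedim} is as a direct consequence (forward reference) of that theorem. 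The resulting map $g\mapsto h_g$ is linear and, since $h_g|_{\Gamma_\d}=g$, injective, yielding the desired infinite-dimensional subspace.

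For the endpoint exponents not directly handled by the pseudo-differential machinery, I would exploit that $\Omega_\d$ is bounded, so $L^q(\Omega_\d)\subset L^p(\Omega_\d)$ whenever $q\geq p$, and hence $N^{s,q,\d}(\Omega)\subset N^{s,p,\d}(\Omega)$. Consequently, infinite-dimensionality for any single intermediate $p\in(1,\infty)$ transfers to all $p\in[1,\infty)$. The case $p=\infty$ can be handled separately by choosing smooth boundary data $g\in C_c^\infty(\Gamma_\d)$ and using the higher Bessel regularity of Dirichlet solutions (inferred from the second part of Theorem~\ref{th:grubb} together with standard pseudo-differential regularity) to ensure $h_g\in L^\infty(\Omega_\d)$.
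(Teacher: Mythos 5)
Your route through the characterization of Section~\ref{subsec:characterization} works cleanly only for $p\in(1,\tfrac{2}{1-s})$ on $C^{1,1}$ domains, and the two devices you propose for the remaining exponents both fail. First, the inclusion $N^{s,q,\d}(\Omega)\subset N^{s,p,\d}(\Omega)$ for $q\geq p$ only transfers infinite-dimensionality \emph{downward} in the exponent: to conclude for a given $p$ you need some $q\geq p$ already handled, and the Dirichlet theory only supplies $q<\tfrac{2}{1-s}$, so the regime $p\in[\tfrac{2}{1-s},\infty)$ remains uncovered; the claim that one intermediate exponent "transfers to all $p\in[1,\infty)$" is the wrong direction. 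Second, the fix for $p=\infty$ rests on a regularity assertion the paper itself refutes: by Remark~\ref{rem:grubbth}\,b) and Remark~\ref{rem:Grubbreg}, the solution $w_g$ of the pseudo-differential Dirichlet problem carries a factor $\dist(\cdot,\partial\Gamma')^{(1-s)/2}$ near the boundary even for smooth data $g$, so $h_g=\Pcal^s_\d w_g$ behaves like $\dist(\cdot,\partial\Omega)^{-(1-s)/2}$ there and fails to lie in $L^p(\Omega_\d)$ for $p\geq\tfrac{2}{1-s}$, let alone in $L^\infty$ (this is exactly the failure of surjectivity recorded in Remark~\ref{rem:plargebcs}, and the jump visible in Figure~\ref{fig:zoom}). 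Smoothness of $g$ does not remove this singularity; only the special data arising from functions smooth across $\partial\Omega$ do (Proposition~\ref{prop:extraregularity}). A further, milder issue is that the proposition is stated for bounded Lipschitz $\Omega$, while Theorem~\ref{th:Ncalchar} requires $C^{1,1}$.

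The paper avoids all of this with a more elementary construction that never invokes the Dirichlet solvability theory: via the operator $\Ecal^s_\d=\Pcal^s_\d\circ\Ecal\circ\Qcal^s_\d$, every $u\in H^{s,p,\d}(\Omega)$ yields $h=(\Ecal^s_\d u)|_{\Omega_\d}-u\in\Nzero$. Taking $u_1,\dots,u_m$ to be rescaled copies of a fixed $u\in L^p\setminus H^{s,p,\d}(\R^n)$ (Example~\ref{ex}) supported in small disjoint balls compactly contained in $\Gamma_\d$, one obtains functions in $H^{s,p,\d}(\Omega)$ no nontrivial linear combination of which extends to $H^{s,p,\d}(\R^n)$, whence the corresponding $h_1,\dots,h_m$ are linearly independent. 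This works simultaneously for all $p\in[1,\infty]$ and Lipschitz $\Omega$. Where your argument does apply it buys something extra --- an explicit linear embedding of all of $L^p(\Gamma_\d)$ into $\Nzero$ --- but as a proof of the proposition as stated it is incomplete.
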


\begin{proof} The proof idea relies on a semi-explicit construction procedure in three steps: Starting with $u \in H^{s,p,\d}(\Omega)$, there is a $v \in W^{1,p}(\Omega)$ with $\nabla v = D^s_\d u$ on $\Omega$ by Lemma~\ref{le:translation1}. Next, we extend $v$ in an arbitrary way to a compactly supported function in $W^{1,p}(\R^n)$. In view of Lemma~\ref{le:translation2}, it holds that $\Ecal_\d^s u:=\Pcal_\delta^s v \in H^{s,p,\d}(\R^n)$ satisfies $D^s_\d (\Ecal_\d^s u)=D^s_\d u$ on $\Omega$, or equivalently, 
\begin{align}\label{auxh}
h=(\Ecal_\delta^s u)|_{\Omega_\d}-u \in \Nzero.
\end{align}
Note that $\Ecal_\d^s u$ can be viewed as an extension operator of $u$ modulo a function with vanishing nonlocal gradient, see Section~\ref{subsec:extension} for more details.

Let $m\in \N$. With the aim of constructing $m$ linearly independent functions in $\Nzero$, we take $u_1,\ldots,u_{m} \in H^{s,p,\d}(\Omega)$ with $m\in \N$ such that no (non-trivial) linear combination of them can be extended to a function in $H^{s,p,\d}(\R^n)=H^{s,p}(\R^n)$. This can be achieved, for instance, if each function $u_j$ has a suitable singularity in  different places in the collar $\Gamma_\d$. \color{black}
 
To give more details, choose $x_1, \ldots, x_m$ as distinct points in $\Gamma_\d$ and let $\eps>0$ be such that the balls $B_{2\eps}(x_j)$ are pairwise disjoint and compactly contained in $\Gamma_\d$. Further, define 
\begin{align*}
u_j(x) = \mathbbm{1}_{B_{\eps}(x_j)}(x) u\Bigl(\frac{x-x_j}{\eps}\Bigr) \quad\text{for $x\in \Omega_\d$ and $j=1, \ldots, m$,}
\end{align*}
\color{black}
where $u\in L^p(\R^n)\setminus H^{s, p, \delta}(\R^n)$ with $\supp(u)$ compactly contained in $\overline{B_1(0)}$ (cf. Example~\ref{ex}). 
Note that any function in $L^p(\Omega_\delta)$ that is zero in a compact set containing $\Omega$ lies in $H^{s, p, \delta}(\Omega)$. Indeed, the nonlocal gradient is then given by a convolution and defines an $L^p$-function due to Young's convolution inequality, see~Remark~\ref{rem:dsdconv}\,b); the integration by parts formula in \eqref{eq:intbyparts} can be verified via Fubini's theorem. We conclude that $u_j\in H^{s,p,\delta}(\Omega)$ for all $j$. On the other hand, by construction no $u_j$ has an extension to $H^{s, p,\d}(\R^n)$, and thus, nor does any linear combination of $u_1, \ldots, u_m$.

According to~\eqref{auxh}, we now set $h_j=(\Ecal_{\d}^s u_j)|_{\Omega_\d} - u_j \in \Nzero$ for $j=1, \ldots, m$. If these functions $h_j$ were linearly dependent, then one could find a non-trivial linear combination of $u_1, \ldots, u_m$ that can be extended to $\R^n$ via the operator $\Ecal_\d^s$, which contradicts the assumption. Hence, $h_1, \ldots, h_m$ are linearly independent.
  Because $m\in \N$ is arbitrary, this shows that $\Nzero$ must be infinite-dimensional. 
\end{proof}

For the reader's convenience, we give here explicit examples of functions $u\in L^p(\R^n)\setminus H^{s, p, \delta}(\R^n)$ for all $1 \leq p \leq \infty$ with compact support in $\overline{B_1(0)}$, as they were used in the previous proof. 
\begin{example}\label{ex}
Let $p\in [1, \infty)$. Defining for some $0<\nu< \min\{\tfrac{n}{p},s\},$
\begin{align*}
u=\mathbbm{1}_{B_{1}(0)}\abs{\,\cdot\, }^{-\tfrac{n}{p}+\nu} \in L^p(\R^n),
\end{align*} 
gives a function with the desired properties if $\nu$ is sufficiently small. That $u \notin H^{s,p, \d}(\R^n)$, follows by contradiction with the estimate in \cite[Proposition~7.2]{BCM23}, once we have shown the existence of a constant $c>0$ such that
\begin{align}\label{eq:rough}
\norm{u-u(\cdot +h)}_{L^p(B_1(0))} 
 \geq c\, \abs{h}^{\nu} \qquad \text{for all $h \in  B_1(0)$.}
\end{align}
Indeed, we have that
\begin{align*}
\norm{u-u(\cdot +h)}_{L^p(B_1(0))} & \geq \norm{u}_{L^p(B_{|h|/2}(0))} - \norm{u(\,\cdot  +h)}_{L^p(B_{|h|/2}(0))}\\
&\geq \left(\int_{B_{\abs{h}/2}(0)} \abs{x}^{-n+\nu p}\,dx\right)^{1/p}- \left( \bigl|B_{|h|/2}(0)\bigr|\, (|h|/2)^{-n+\nu p}\right)^{1/p}\\ 
&\geq \left(\frac{c_1}{(\nu p)^{1/p}}-c_2\right) |h|^{\nu},
\end{align*}
with $c_1, c_2>0$, so that \eqref{eq:rough} follows for small $\nu$.

For larger $p$ there are also more elementary examples, such as the indicator function of a ball when $1<sp<\infty$, or any discontinuous function when $n<sp<\infty$ (see~e.g.,~\cite[Section~2.1]{BCM20}). For the case $p=\infty$, we can  take $u$ to be any discontinuous function with support in $B_1(0)$. Indeed, if it were true that $u \in H^{s,\infty,\d}(\R^n)$, then we also find that $u \in H^{s,q,\d}(\R^n)$ for all $q  \in [1,\infty)$ given its compact support. This would yield by \cite[Theorem~6.3]{BCM23}, that $u$ is H\"{o}lder continuous up to order $s$, which gives a contradiction.
\end{example}

\subsection{Characterization of $\Nzero$} \label{subsec:characterization}
Now that the presence of non-constant functions with zero nonlocal gradient is confirmed, the next task is to understand - and eventually, characterize - all functions in~$\Nzero$.

We start by observing that a function $u \in L^p(\Omega_\d)$ lies in $\Nzero$ if and only if $\Qcal_\d^su=Q^s_\d*u$ is constant (in $\Omega$). Indeed, if $Q_\d^s\ast u$ is constant, then
\[
\int_{\Omega_\d} u \Div^s_\d \phi\,dx = \int_{\R^n} u \,Q^s_\d*\Div\phi\,dx = \int_{\Omega} (Q^s_\d * u) \Div \phi\,dx=0 \quad \text{for all $\phi \in C_c^{\infty}(\Omega;\R^n)$,}
\]
which shows that $u \in H^{s,p,\d}(\Omega)$ with $D^s_\d u =0$. Conversely, if $u \in \Nzero$, it follows from Lemma~\ref{le:translation1} that $\Qcal^s_\d u \in W^{1,p}(\Omega)$ has zero gradient, and is thus, constant. The desired characterization of $\Nzero$ therefore comes down to identifying all solutions $h\in L^p(\Omega_\delta)$ for convolution equations of the form 
\begin{align*}
\text{$Q_\delta^s\ast h=c$ \ a.e.~in $\Omega$}
\end{align*} for any $c\in \R$.

Our strategy for characterizing $\Nzero$
starts at a suitable boundary-value problem involving the convolution operator $\Qcal_\delta^s$. Via inversion of $\Qcal_\d^s$ (based on the translation tools from Section~\ref{subsec:translation}), we then rewrite the latter  equivalently as a pseudo-differential equation featuring $\Pcal_\delta^s$ subject to Dirichlet boundary conditions, for which  a solution theory can be achieved. Let us make the mentioned equivalence precise.

\begin{lemma}[Equivalence between (C) and (P)]\label{le:equivpdo}
Let $p \in (1,\infty)$, $c \in \R$ and $g \in L^p(\Gamma_\d)$. Further, let $\Omega'\subset \R^n$ be any smooth and bounded set with $\Omega_{2\d} \subset \Omega'$ and $\Gamma':=\Omega' \setminus \overline{\Omega}$. Then, 
\begin{equation*} 
\text{\rm (C)}\ \begin{cases}
\Qcal^s_\d h = c& \text{a.e.~in $\Omega$},\\
h = g & \text{a.e.~in $\Gamma_\d$}, 
\end{cases}
\end{equation*}
has a solution $h\in L^p(\Omega_\delta)$ if and only if there exists a $w\in H^{1-s,p}(\R^n)$ solving 
\begin{equation*} 
\text{\rm (P)}\ \begin{cases}
\Pcal^s_\d w = \mathbbm{1}_{\Gamma_\d}g &\text{a.e.~in $\Gamma'$},\\
w = c &\text{a.e.~in $\overline{\Omega}$},\\
w = 0 &\text{a.e.~in $(\Omega')^c$}.\end{cases}
\end{equation*}
Specifically, the following holds:
\begin{itemize}
\item[$(i)$] If $h \in L^p(\Omega_\d)$ is a solution to {\rm (C)}, then $w:=\Qcal^s_\d(\mathbbm{1}_{\Omega_\d} h) \in H^{1-s,p}(\R^n)$ solves {\rm (P)}.
\smallskip
\item[$(ii)$] 
If $w \in H^{1-s,p}(\R^n)$ satisfies {\rm (P)}, then $h:=(\Pcal^s_\d w)|_{\Omega_\d}\in L^p(\Omega_\delta)$ is a solution to \text{\rm (C)}. 
\end{itemize}
\end{lemma}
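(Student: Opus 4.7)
The overall approach is to use the isomorphism property~\eqref{eq:psdhom} between $\Pcal^s_\d$ and $\Qcal^s_\d$ on the scale of Bessel potential spaces: concretely, $\Qcal^s_\d\colon L^p(\R^n)\to H^{1-s,p}(\R^n)$ and $\Pcal^s_\d\colon H^{1-s,p}(\R^n)\to L^p(\R^n)$ are mutually inverse bounded operators, so that the problems (C) and (P) can be related by applying the appropriate operator. The two geometric ingredients that make the translation local rather than global are the compact support of $Q^s_\d$ in $B_\d(0)$ and the inclusion $\Omega_{2\d}\subset \Omega'$; together, they control where the values of the transformed function can be read off from the values of the original.

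For the implication $(i)$, given $h\in L^p(\Omega_\d)$ solving (C), the plan is to set $w:=\Qcal^s_\d(\mathbbm{1}_{\Omega_\d}h)\in H^{1-s,p}(\R^n)$ and to verify each of the three conditions in (P) by exploiting the support of $Q^s_\d$. For $x\in\overline{\Omega}$, the ball $B_\d(x)$ lies in $\Omega_\d$, so $w(x)=(Q^s_\d\ast h)(x)=\Qcal^s_\d h(x)=c$. For $x\in(\Omega')^c$, the distance from $x$ to $\Omega_\d$ is at least $\d$ (because $\Omega_{2\d}\subset\Omega'$), hence $Q^s_\d(x-\,\cdot\,)$ vanishes on $\Omega_\d$ and $w(x)=0$. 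Finally, applying $\Pcal^s_\d$ gives $\Pcal^s_\d w=\mathbbm{1}_{\Omega_\d}h$ in $L^p(\R^n)$, whose restriction to $\Gamma'$ equals $g$ on $\Gamma_\d$ and zero on $\Gamma'\setminus\Omega_\d$, i.e.~$\mathbbm{1}_{\Gamma_\d}g$.

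For the converse $(ii)$, given $w\in H^{1-s,p}(\R^n)$ solving (P), I would set $h:=(\Pcal^s_\d w)|_{\Omega_\d}\in L^p(\Omega_\d)$. The condition $h=g$ a.e.~in $\Gamma_\d$ is immediate from the first equation of (P) together with $\Gamma_\d\subset\Gamma'$. The condition $\Qcal^s_\d h=c$ a.e.~in $\Omega$ follows by fixing $x\in\Omega$ and using that $Q^s_\d(x-\,\cdot\,)$ is supported in $B_\d(x)\subset\Omega_\d$, which yields
\[
\Qcal^s_\d h(x)=\int_{\Omega_\d}Q^s_\d(x-y)\,\Pcal^s_\d w(y)\,dy=(\Qcal^s_\d\Pcal^s_\d w)(x)=w(x)=c,
\]
where the last identity invokes the second equation of (P). The only subtlety is the careful bookkeeping of supports and of the $L^p$-versus-$H^{1-s,p}$ regularity under $\Pcal^s_\d$ and $\Qcal^s_\d$, which is handled by~\eqref{eq:psdhom}; the rest of the argument is a direct verification.
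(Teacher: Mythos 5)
Your proposal is correct and follows essentially the same route as the paper's proof: both rest on the isomorphism property \eqref{eq:psdhom} (so that $\Pcal^s_\d$ and $\Qcal^s_\d$ are mutually inverse between $H^{1-s,p}(\R^n)$ and $L^p(\R^n)$) combined with the support condition $\supp(Q^s_\d)\subset B_\d(0)$ and the inclusion $\Omega_{2\d}\subset\Omega'$ to localize the identities. Your verification of the individual conditions in (C) and (P) is just a slightly more explicit spelling-out of the paper's argument, so there is nothing to add.
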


\begin{proof}
The main ingredient of this proof is~\eqref{eq:psdhom} with $\beta=1-s$, according to which $\Pcal_\d^s$ is a isomorphism from $H^{1-s, p}(\R^n)$ to $L^p(\R^n)$ with inverse $\Qcal_\d^s$. 
 
The implication $(i)$ follows then immediately from the observation that 
\begin{align*}
\Pcal^s_\d w= \Pcal_\delta^s\Qcal_\delta^s (\mathbbm{1}_{\Omega_\d}h)= \mathbbm{1}_{\Omega_\d}h = \mathbbm{1}_{\Gamma_\delta}g \quad\text{ a.e.~in $\Omega^c$, }
\end{align*}
along with the property that $\supp(Q^s_\d) \subset B_{\d}(0)$, which implies $\supp(w)\subset \Omega_\delta +  B_\delta(0)\subset \Omega_{2\delta}\subset \Omega'$ as well as $w=\Qcal_\delta^s h =c$ a.e.~in $\overline{\Omega}$ given $\abs{\partial\Omega}=0$.

On the other hand, $(ii)$ holds since   
$\Qcal^s_\d h  = \Qcal_\delta^s (\Pcal_\delta^s w)= w =c$ a.e.~in $\Omega$ 
 and $h=\Pcal^s_\d w = g$ a.e.~in $\Gamma_\d $, again using that $\abs{\partial\Omega}=0$.
\end{proof}

Based on the previous lemma, we can express $\Nzero$ in terms of the solution sets of the boundary-value problems (C) and (P). To be precise, let
\begin{align}\label{eq:Nunion}
\Ccg(\Omega):=\bigcup_{c \in \R,  g\in L^p(\Gamma_\d)} \Ccg(c,g) \qquad\text{ and }\qquad \Pg(\Omega):=\bigcup_{c \in \R,  g\in L^p(\Gamma_\d)} \Pg(c,g),
\end{align} 
where $\Ccg(c,g)$ for $c \in \R$ and $g\in L^p(\Gamma_\d)$ denotes the set of all solutions in $L^p(\Omega_\d)$ to (C), and $\Pg(c,g)$ comprises the functions in $H^{1-s, p}(\R^n)$ solving (P). Then,
\begin{align}\label{Nzero0}
\Nzero= \Ccg(\Omega)=\Pcal_\delta^s\bigl(\Pg(\Omega)\bigr)|_{\Omega_\d}. 
\end{align}

We take~\eqref{Nzero0} as motivation to turn our attention to (P) and address the question of its solvability. It turns out that the recent existence and uniqueness results by Grubb \cite{Gru22} in combination with the regularity theory in~\cite{AbG23} by Abels \& Grubb for general pseudo-differential operators 
(see Theorem~\ref{th:grubb} for a version tailored to our setting)
 provides an answer. Even though the next lemma is a direct application of this abstract framework, we have included for illustration also a hands-on alternative proof in the case $p=2$. 

\begin{lemma}[Existence and uniqueness for (P)]\label{le:solpdo2}

Let  $p\in (1, \infty)$ and $\Omega$ be a bounded $C^{1,1}$-domain. Then, for every $c \in \R$ and $g\in L^p(\Gamma_\d)$, the problem \text{\rm (P)} admits a unique solution $w_{c,g}\in H^{\frac{1-s}{2}, p}(\R^n)$.
If $p\in (1, \frac{2}{1-s})$, then $w_{c,g}\in H^{1-s, p}(\R^n)$.
\end{lemma}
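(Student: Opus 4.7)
The strategy is to reduce (P) to a homogeneous pseudo-differential Dirichlet problem on $V := \Gamma' = \Omega'\setminus\overline{\Omega}$ and then invoke Theorem~\ref{th:grubb} applied to the operator $\Pcal^s_\d$. By Lemma~\ref{lem:pdoproperties}, $\Pcal^s_\d$ is precisely a strongly elliptic, even, classical pseudo-differential operator of order $2a = 1-s$ satisfying the required G\aa rding inequality, so we land exactly in the setting of the theorem with $a = (1-s)/2$. The set $V$ is bounded with $C^{1,1}$-boundary (since both $\Omega$ and the auxiliary smooth $\Omega'$ from Lemma~\ref{le:equivpdo} are regular enough, and $\overline{\Omega}\subset \Omega_{2\d}\subset \Omega'$), and its possible disconnectedness is harmless by Remark~\ref{rem:grubbth}\,a).

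To reduce the inhomogeneous datum $w=c$ on $\overline{\Omega}$ to zero, the plan is to fix a cut-off $\varphi \in C_c^\infty(\R^n)$ with $\varphi \equiv 1$ on $\overline{\Omega}$ and $\supp\varphi \subset \Omega'$, and to seek $w_{c,g}$ in the form $w_{c,g} = c\varphi + \tilde w$. Then $\tilde w$ must vanish on $\overline{\Omega}\cup(\Omega')^c$, equivalently $\tilde w \in H^{a,p}_0(V)$, and solve
\begin{equation*}
\Pcal^s_\d \tilde w = \mathbbm{1}_{\Gamma_\d} g - c\,(\Pcal^s_\d\varphi)|_V \quad \text{in $V$.}
\end{equation*}
The right-hand side belongs to $L^p(V)$ since $g \in L^p(\Gamma_\d)$ and the mapping property~\eqref{eq:psdhom} at $t=1-s$ sends the Schwartz-type function $\varphi \in H^{1-s,p}(\R^n)$ into $\Pcal^s_\d\varphi \in L^p(\R^n)$.

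Theorem~\ref{th:grubb} then yields a unique $\tilde w \in H^{a,p}_0(V)$ solving this reduced equation, and the function $w_{c,g}:= c\varphi + \tilde w \in H^{(1-s)/2,p}(\R^n)$ satisfies all three conditions of (P) by direct verification. Uniqueness transfers back: if $w_1$ and $w_2$ both solve (P), their difference $u = w_1-w_2 \in H^{(1-s)/2,p}(\R^n)$ is supported in $\overline{V}$ with $\Pcal^s_\d u = 0$ on $V$, so $u \in H^{a,p}_0(V)$ and the uniqueness clause of Theorem~\ref{th:grubb} forces $u = 0$. For the improved regularity when $p \in (1, \tfrac{2}{1-s}) = (1, 1/a)$, the second part of Theorem~\ref{th:grubb} upgrades $\tilde w$ to $H^{2a,p}_0(V) = H^{1-s,p}_0(V)$, and since $c\varphi \in H^{1-s,p}(\R^n)$, we conclude $w_{c,g} \in H^{1-s,p}(\R^n)$.

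The proof is essentially bookkeeping once Lemma~\ref{lem:pdoproperties} and Theorem~\ref{th:grubb} are in hand; the only substantive point is the reduction to zero boundary data, where one must check that subtracting $c\,\Pcal^s_\d\varphi$ preserves $L^p$-integrability of the right-hand side. This follows immediately from the mapping theory of $\Pcal^s_\d$ recalled in Section~\ref{subsec:translation}, so no genuine difficulty arises.
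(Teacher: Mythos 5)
Your proof is correct and follows essentially the same route as the paper: identify $\Pcal^s_\d$ as fitting the framework of Theorem~\ref{th:grubb} via Lemma~\ref{lem:pdoproperties}, reduce to a homogeneous Dirichlet problem on $V=\Gamma'$, and apply the existence, uniqueness and ($p<\tfrac{2}{1-s}$) regularity clauses of that theorem. The only difference is the lifting of the datum $w=c$ on $\overline{\Omega}$ — you subtract $c\varphi$ for a smooth cutoff, whereas the paper adds $\Qcal^s_\d(\mathbbm{1}_{\Omega_\d}c)$, which keeps the reduced right-hand side of the form $\mathbbm{1}_{\Gamma_\d}(g-c)$ — but both liftings are routine and yield the same conclusion.
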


\begin{proof} 
We may assume without loss of generality that $c =0$, since for every $w \in \Pg(0,g-c)$ it holds that $w+\Qcal^s_\d(\mathbbm{1}_{\Omega_\d}c) \in \Pg(c,g)$. Hence, we can focus our attention to the pseudo-differential equation
\begin{equation}\label{eq:pzero}
\begin{cases}
\Pcal^s_\d w = \mathbbm{1}_{\Gamma_\d}g &\text{a.e.~in $\Gamma'$},\\
w = 0 &\text{a.e.~in $(\Gamma')^c$}. \end{cases}
\end{equation}
The statement now follows immediately by applying Theorem~\ref{th:grubb} for the pseudo-differential operator $\Pcal=\Pcal^s_\d$ and the set $V=\Gamma'$. Indeed, $\Pcal_\d^s$ satisfies all the required assumptions according to Lemma~\ref{lem:pdoproperties} and $\Gamma'=\Omega'\setminus \overline{\Omega}$ is a bounded open set with $C^{1,1}$-boundary, given that $\partial \Gamma' =\partial \Omega' \cup \partial \Omega$.\smallskip

Our alternative proof for $p=2$ relies on a familiar variational argument and exploits regularity results for the fractional Laplacian. 
 Let us consider the operator
\[
\Lcal_\delta^s:\Scal(\R^n) \to \Scal(\R^n), \quad  \widehat{\Lcal_\delta^s\varphi} = \frac{\widehat{\varphi}}{\sqrt{\widehat{Q^s_\d}}},
\]
which can be extended to a bounded linear operator $H^{t}(\R^n)\to H^{t - \frac{1-s}{2}}(\R^n)$ for any $t \in \R$. Then,
$(\Lcal_\delta^s)^2=\Lcal_\delta^s\circ \Lcal_\delta^s=\Pcal^s_\d$,
and we observe that $\norm{\Lcal_\delta^s\, \cdot}_{L^2(\R^n)}$ is a norm on $H^{\frac{1-s}{2}}(\R^n)$ that is equivalent to $\norm{\cdot}_{H^{\frac{1-s}{2}}(\R^n)}$ in view of~\eqref{comparison:QBessel}. 

As a consequence of the generalized Dirichlet principle, the 
 functional
\[
w \mapsto \norm{\Lcal_\delta^s w}_{L^2(\R^n)}^2-\int_{\R^n}\mathbbm{1}_{\Gamma_\d}{g}\,w\,dx
\]
over all functions $w \in H^{\frac{1-s}{2}}_0(\Gamma')$ has a unique minimizer $w_\ast$, which is also the unique solution to 
\begin{align}\label{eq:weakpdo}
\int_{\R^n} \Lcal_{\delta}^sw\;\Lcal_\delta^s \phi - \mathbbm{1}_{\Gamma_\d}{g}\phi\,dx =0 \quad \text{for all $\phi \in C_c^{\infty}(\Gamma')$}.
\end{align}
\color{black}
Since~\eqref{eq:weakpdo} is a weak formulation of~\eqref{eq:pzero} for $p=2$, the function $w_\ast\in H_0^{\frac{1-s}{2}}(\Gamma')$  is indeed the only candidate for the sought solution. 

It remains to prove that $w_\ast\in H^{1-s}(\R^n)$. To this end, we compare the operator $\Lcal_\delta^s$ with the fractional Laplacian $(-\Delta)^{\frac{1-s}{4}}: H^{\frac{1-s}{2}}(\R^n)\to L^2(\R^n)$, showing that they differ by a bounded linear operator on $L^2(\R^n)$. 
Indeed, $\Kcal_\delta^s:=\Lcal_\delta^s-(-\Delta)^{\frac{1-s}{4}}$ is an $L^2$-Fourier multiplier operator 
with multiplier 
\begin{align*}
m_\delta^s(\xi)=\frac{1}{\sqrt{\widehat{Q}^s_\d(\xi)}}-\abs{2\pi\xi}^{\frac{1-s}{2}} \qquad \text{ for $\xi\in \R^n$.}
\end{align*} The boundedness of $m_\delta^s$ follows from the smoothness and positivity of $\widehat{Q}_\delta^s$ together with the observation that for $|\xi|>1$,
\[
m_{\delta}^s(\xi) 
=\Biggl(\frac{1}{\sqrt{1+\abs{2\pi\xi}^{1-s}R^s_\d(\xi)}}-1\Biggr)\abs{2\pi\xi}^{\frac{1-s}{2}},
\] 
which is bounded since $R^s_\d$ (cf.~\eqref{eq:decayR}) is a Schwartz function. A particular consequence is that $\Kcal_\delta^s(-\Delta)^{\frac{1-s}{4}}=(-\Delta)^{\frac{1-s}{4}}\Kcal_\delta^s:H^{\frac{1-s}{2}}(\R^n) \to L^2(\R^n)$ is a bounded linear operator.

 Then,~\eqref{eq:weakpdo} turns into
\[
\int_{\R^n} (-\Delta)^{\frac{1-s}{4}}w\;(-\Delta)^{\frac{1-s}{4}}\phi + \bigl(2\Kcal_\delta^s(-\Delta)^{\frac{1-s}{4}}w +(\Kcal_\delta^s)^2w- \mathbbm{1}_{\Gamma_\d}{g}\bigr)\phi\,dx =0 \quad \text{for all $\phi \in C_c^{\infty}(\Gamma')$},
\]
which implies that $w_\ast$ weakly satisfies 
\[
\begin{cases}
(-\Delta)^{\frac{1-s}{2}}w = 2\Kcal_\delta^s(-\Delta)^{\frac{1-s}{4}}w +(\Kcal_\delta^s)^2w- \mathbbm{1}_{\Gamma_\d}{g}\quad &\text{in $\Gamma'$},\\
w =0 \quad &\text{in $(\Gamma')^c$}.
\end{cases}
\] 
Since the right-hand side of the fractional differential equation lies in $L^2(\Gamma')$ and the boundary $\partial \Gamma'=\partial \Omega' \cup \partial \Omega$ is $C^{1,1}$, we obtain from established regularity results for the fractional Laplacian (see, e.g.,~\cite{Gru15, ViE65, LiP20} for smooth domains and \cite[Theorem~1.1]{AbG23} for $C^{1,1}$-domains) that $w \in H^{1-s}(\R^n)$, as desired. 
\end{proof}

\begin{remark}\label{rem:Grubbreg} 
The range of $p$ for which $w_g \in H^{1-s,p}(\R^n)$ is sharp, since even for smooth $\Omega$, one  can find  $g \in L^p(\Gamma_\d)$ such that $w_g \notin H^{1-s,p}(\R^n)$ when $p \in [\frac{2}{1-s},\infty)$.  To see this, let us take $w$ equal to $\dist(\cdot,\partial \Gamma')^{\frac{1-s}{2}}$ near the boundary of $\Gamma'$ with $\Pcal^s_\d w |_{\Gamma'} \in C^{\infty}(\R^n)|_{\Gamma'}$ as in~Remark~\ref{rem:grubbth} b). Then, we define 
\begin{align*}
\tilde{w}:=\Qcal^s_\d (\mathbbm{1}_{\Omega_\d} \Pcal^s_\d w),
\end{align*} which coincides with $w$ in a neighborhood of $\Omega$, given~\ref{itm:h2}. Therefore, $\tilde{w}$ equals $\dist(\cdot,\partial \Omega)^{\frac{1-s}{2}}$ near the boundary of $\Omega$, which yields $\tilde{w} \notin H^{1-s,p}(\R^n)$. However, since $\tilde{w}=w=0$ in $\Omega$ and $\supp(\tilde{w}) \subset \Omega_{2\d}$, we deduce that
\[
\begin{cases}
\Pcal^s_\d \tilde{w} = \mathbbm{1}_{\Gamma_\d} \Pcal^s_\d w &\text{a.e.~in $\Gamma'$},\\
\tilde{w} = 0 &\text{a.e.~in $(\Gamma')^c$.}
\end{cases}
\]
\color{black} With $g:=\Pcal^s_\d w|_{\Gamma_\d} \in L^p(\Gamma_\d)$, the claim follows.
\end{remark}

\color{black}
Lemma~\ref{le:solpdo2}  in combination with Lemma~\ref{le:equivpdo} 
provide useful information about the solution sets for the 
boundary-value problems (P) and
(C). 
Since the statement of Lemma~\ref{le:solpdo2} is qualitatively different depending on whether $p$ is smaller or larger than the critical value $\frac{2}{1-s}$, we discuss these two cases separately. 

Suppose first that $p\in (1, \frac{2}{1-s})$. Then, for any $c \in \R$ and $g\in L^p(\Gamma_\d)$,
the sets $\Pg(c,g)$ and $\Ccg(c,g)$ are singletons and can be represented as
\begin{align}\label{singletons}
\Pg(c,g) = \{w_{c,g}\} \qquad \text{and } \qquad \Ccg(c,g) = \{h_{c,g}\}, 
\end{align}
where $w_{c,g}\in H^{1-s,p}(\R^n)$ and $h_{c,g} :=(\Pcal_\d^s w_{c,g})|_{\Omega_\d}\in L^p(\Omega_\d)$ are the unique solutions to  (P) and (C), respectively. 

Summarizing these findings, we are now in the position to state the main result of this section concerning the characterization of $\Nzero$.

\begin{theorem} [Characterization of $\Nzero$ for $p\in (1, \frac{2}{1-s})$]\label{th:Ncalchar} 
Let $p\in  (1, \frac{2}{1-s})$ and let $\Omega$ be a bounded $C^{1,1}$-domain.
Then,
\begin{align}\label{reprNth}
\Nzero = \bigcup_{c\in\R,g\in L^p(\Gamma_\delta)} \Ccg(c,g)=\bigcup_{c\in\R,g\in L^p(\Gamma_\delta)} \{h_{c, g}\},
\end{align}
where $h_{c, g}$ for $c \in \R$ and $g\in L^p(\Gamma_\d)$ is the unique solution of \text{\rm (C)}.
\end{theorem}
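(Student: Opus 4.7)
The plan is to combine Lemma~\ref{le:equivpdo}, which translates problem (C) into the pseudo-differential Dirichlet problem (P), with the existence and uniqueness theory for (P) provided by Lemma~\ref{le:solpdo2}. First, I would observe that the first equality in~\eqref{reprNth} is essentially~\eqref{Nzero0}: since $h\in \Hspd(\Omega)$ belongs to $\Nzero$ if and only if $\Qcal_\d^s h$ is constant on $\Omega$ (via Lemma~\ref{le:translation1}), such $h$ solve (C) with $c=(\Qcal_\d^s h)|_\Omega$ and $g=h|_{\Gamma_\d}$. Consequently, the substantive content of the theorem is the claim that each $\Ccg(c,g)$ is a singleton $\{h_{c,g}\}$, that is, that (C) is uniquely solvable for every pair $(c,g)\in\R\times L^p(\Gamma_\d)$.

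Next, for \emph{existence}, I would fix $c\in\R$ and $g\in L^p(\Gamma_\d)$ and invoke Lemma~\ref{le:solpdo2} to produce a solution $w_{c,g}\in H^{\frac{1-s}{2},p}(\R^n)$ of (P); the hypothesis $p<\frac{2}{1-s}$ upgrades this to $w_{c,g}\in H^{1-s,p}(\R^n)$, which is precisely the space where $\Pcal_\d^s$ acts as an isomorphism into $L^p(\R^n)$ by~\eqref{eq:psdhom}. Lemma~\ref{le:equivpdo}(ii) then yields $h_{c,g}:=(\Pcal_\d^s w_{c,g})|_{\Omega_\d}\in \Ccg(c,g)$. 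For \emph{uniqueness}, I would take $h_1,h_2\in \Ccg(c,g)$ and apply Lemma~\ref{le:equivpdo}(i) to obtain $w_i:=\Qcal_\d^s(\mathbbm{1}_{\Omega_\d}h_i)\in H^{1-s,p}(\R^n)\subset H^{\frac{1-s}{2},p}(\R^n)$ solving (P). Since Lemma~\ref{le:solpdo2} asserts uniqueness already in $H^{\frac{1-s}{2},p}(\R^n)$, we conclude $w_1=w_2$, and using $\Pcal_\d^s\Qcal_\d^s=\id$ on $L^p(\R^n)$ we obtain $\Pcal_\d^s w_i=\mathbbm{1}_{\Omega_\d}h_i$, whose restriction to $\Omega_\d$ is $h_i$; hence $h_1=h_2$.

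The main point of care is the bookkeeping of Bessel regularity: the threshold $p=\frac{2}{1-s}$ governs whether solutions of (P) possess enough smoothness for $\Pcal_\d^s$ to map them back into $L^p(\R^n)$, which is exactly what closes the loop between (C) and (P). The boundary behavior of solutions to pseudo-differential Dirichlet problems (cf.~Remark~\ref{rem:Grubbreg}) shows that this regularity genuinely breaks down above the threshold, which is why the complementary regime $p\geq\frac{2}{1-s}$ has to be handled separately in Proposition~\ref{th:Ncalchar_plarge}.
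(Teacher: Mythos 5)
Your proposal is correct and follows essentially the same route as the paper: the identification $\Nzero=\Ccg(\Omega)$ via the observation that $h\in\Nzero$ iff $\Qcal_\d^s h$ is constant (Lemma~\ref{le:translation1}), combined with the equivalence of (C) and (P) from Lemma~\ref{le:equivpdo} and the existence/uniqueness and $H^{1-s,p}$-regularity from Lemma~\ref{le:solpdo2}, which is exactly how the paper establishes that each $\Ccg(c,g)$ is the singleton $\{h_{c,g}\}$. Your explicit bookkeeping of the regularity threshold $p<\frac{2}{1-s}$ and of the uniqueness step via $\Pcal_\d^s\Qcal_\d^s=\id$ matches the derivation of \eqref{singletons} in the text.
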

\begin{proof} 
This follows immediately from \eqref{eq:Nunion}, \eqref{Nzero0} and \eqref{singletons}.
\end{proof}

As a consequence of Theorem~\ref{th:Ncalchar}, 
we obtain that the bounded linear map
\begin{align}\label{homPhi}
\Phi_\d^s: \Nzero\to \R\times  L^p(\Gamma_\delta), \quad h\mapsto \Bigl(\int_{\Omega}  Q_\d^s \ast h \, dx, h|_{\Gamma_\d} \Bigr) 
\end{align}
is bijective. The inverse $(\Phi_\d^s)^{-1}:(c, g)\mapsto h_{c, g}$ for $(c, g)\in \R\times L^p(\Gamma_\d)$ is then bounded as well by the Banach isomorphism theorem, i.e., 
there is a constant $C>0$ such that 
\begin{align*}
 \norm{h_{c,g}}_{L^p(\O_\d)} \leq C\bigl(\norm{g}_{L^p(\Gamma_\delta)} + |c|\bigr) \quad\text{for all $g\in L^p(\Gamma_\delta)$ and $c\in \R$. }
 \end{align*}

The discussion above implies that the functions with zero nonlocal gradient are uniquely determined by their values in the single collar $\Gamma_\d$ and an averaging condition involving the kernel function $Q_\d^s$. Besides, one can also observe a one-to-one correspondence between functions in $\Nzero$ and these two basic characteristics: the boundary values in $\Gamma_\d$ and the mean value in $\Omega$. 
Indeed, another isomorphism between $\Nzero$ and $\R\times L^p(\Gamma_\d)$ is given by
\begin{align}\label{homPsi}
\Psi_\d^s:\Nzero\to \R\times  L^p(\Gamma_\delta), \quad h\mapsto \Bigl(\int_{\Omega} h \, dx, h|_{\Gamma_\d} \Bigr),
\end{align}
which follows essentially from the next proposition.
\begin{proposition}[Uniqueness in $\Nzero$ with vanishing mean value]\label{prop:mean0}
 Let $p\in (1, \infty)$ and $\Omega$ be a bounded $C^{1,1}$-domain. If $h\in \Nzero$ satisfies
 \begin{align*}
  h=0  \text{ a.e.~in $\Gamma_\d$ \quad  and \quad $\int_{\Omega} h\, dx=0$, }
  \end{align*}
 then $h=0$. 
\end{proposition}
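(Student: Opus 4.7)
The plan is to reduce the claim to the key estimate $\int_\Omega h_1 > 0$, where $h_1 \in \Nzero$ is the canonical element with $Q^s_\d \ast h_1 = 1$ on $\Omega$ and $h_1|_{\Gamma_\d}=0$, and then verify this estimate using the G\r{a}rding-type coercivity of $\Pcal^s_\d$ through an $L^2$-pairing.

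First, since $\Omega_\d$ is bounded, $L^p(\Omega_\d) \hookrightarrow L^{p'}(\Omega_\d)$ for any $p' \in (1, p]$, and all hypotheses on $h$ are preserved under this inclusion. Hence it suffices to treat the range $p \in (1, \tfrac{2}{1-s})$. In this regime, Theorem~\ref{th:Ncalchar} yields the bijection $\Phi^s_\d: \Nzero \to \R \times L^p(\Gamma_\d)$; combined with linearity, every $h \in \Nzero$ with $h|_{\Gamma_\d} = 0$ takes the form $h = c\,h_1$, where $c$ is the constant value of $Q^s_\d \ast h$ on $\Omega$ and $h_1$ is the unique element of $\Nzero$ characterized by $Q^s_\d \ast h_1 = 1$ on $\Omega$ and $h_1 = 0$ on $\Gamma_\d$. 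Since this uniqueness applies equally at $p = 2 \in (1, \tfrac{2}{1-s})$, we moreover have $h_1 \in L^2(\Omega_\d)$. The identity $\int_\Omega h = c\int_\Omega h_1$ then reduces the claim to showing $\int_\Omega h_1 \neq 0$.

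For this key estimate, let $v_1 := Q^s_\d \ast h_1 \in H^{1-s,2}(\R^n) \subset H^{(1-s)/2}(\R^n)$, which by Lemma~\ref{le:equivpdo} is the unique solution of {\rm (P)} with $(c,g)=(1,0)$ at $p=2$; in particular, $v_1 = 1$ on $\Omega$, $v_1 = 0$ on $\Omega_\d^c$, and $\Pcal^s_\d v_1 = h_1$ on all of $\R^n$ by the isomorphism \eqref{eq:psdhom}. Using that $h_1$ is supported in $\overline{\Omega}$ where $v_1 \equiv 1$, the $L^2$-pairing yields
\begin{align*}
\int_\Omega h_1 \;=\; \int_{\R^n} h_1\,v_1\,dx \;=\; \dpr{\Pcal^s_\d v_1,\, v_1}_{L^2(\R^n)} \;\geq\; C\, \norm{v_1}_{H^{(1-s)/2}(\R^n)}^2 \;>\; 0,
\end{align*}
where the lower bound is the G\r{a}rding-type coercivity from Lemma~\ref{lem:pdoproperties} and the strict positivity follows from $v_1 \not\equiv 0$ (since $v_1 = 1$ on $\Omega$). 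Consequently, $c = 0$.

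Finally, with $c=0$, the function $v := Q^s_\d \ast h$ vanishes on $\Omega$ and outside $\Omega_\d$, hence is supported in $\overline{\Gamma_\d}$, and satisfies $\Pcal^s_\d v = h = 0$ on $\Gamma_\d$. Since $\Omega$ is a bounded $C^{1,1}$-domain, $\Gamma_\d$ has $C^{1,1}$-boundary $\partial \Omega \cup \partial \Omega_\d$, so Theorem~\ref{th:grubb} applied with $V = \Gamma_\d$ and vanishing right-hand side forces $v = 0$; inverting via $\Pcal^s_\d$ gives $h = 0$. The most delicate step of this plan is the positivity $\int_\Omega h_1 > 0$, which hinges on the twofold fact that $h_1$, being obtained from uniqueness applied at $p=2$, automatically lies in $L^2(\Omega_\d)$, and that the G\r{a}rding coercivity of $\Pcal^s_\d$ supplies a strictly positive quadratic lower bound once tested against the natural candidate $v_1$.
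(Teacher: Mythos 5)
Your proof is correct, but the decisive step is argued quite differently from the paper's. The paper also first reduces to $L^2$ (via~\eqref{Cindependencep}), but then concludes by a short, self-contained Plancherel computation: using the mean-value condition to re-insert the constant $c=Q^s_\d*h$, it shows $\|\widehat{\mathbbm{1}_{\Omega_\d}h}\|_{L^2}\leq \|(1-\widehat{Q}^s_\d)\,\widehat{\mathbbm{1}_{\Omega_\d}h}\|_{L^2}$, which together with $0\leq 1-\widehat{Q}^s_\d<1$ forces $h=0$; this needs only the elementary kernel properties and none of the pseudo-differential machinery. You instead exploit the full structure theorem, writing $h=c\,h_1$ with $h_1=h_{1,0}$ by linearity of $(\Phi^s_\d)^{-1}$, and reduce everything to the strict positivity $\int_\Omega h_1>0$, which you obtain by pairing $\Pcal^s_\d v_1$ with $v_1=\Qcal^s_\d h_1$ and invoking the G\r{a}rding estimate of Lemma~\ref{lem:pdoproperties}. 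This is a legitimate and rather elegant energy argument, and it yields slightly more than the statement (the sign of $\int_\Omega h_1$, hence of $\int_\Omega h_{c,0}$ in terms of $c$); the price is that it leans on Theorem~\ref{th:Ncalchar} and on extending the coercivity inequality from $C_c^\infty(\R^n)$ to $v_1\in H^{1-s,2}(\R^n)$ — the latter is routine (the Plancherel computation in the proof of Lemma~\ref{lem:pdoproperties} applies verbatim, or one argues by density), but you should say so, since the lemma is stated only for test functions.

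Two minor remarks. First, your closing paragraph is redundant: once $c=0$, the identity $h=c\,h_1$ already gives $h=0$, so there is nothing left to prove. Second, that paragraph contains a small inaccuracy you should not rely on elsewhere: $\partial\Omega_\d$ need not be $C^{1,1}$ even when $\partial\Omega$ is (the outer parallel set of a non-convex domain can develop singularities), which is precisely why the paper's Lemma~\ref{le:equivpdo} works with an auxiliary smooth $\Omega'\supset\Omega_{2\d}$ and the set $\Gamma'=\Omega'\setminus\overline{\Omega}$ rather than with $\Gamma_\d$ itself. Since the paragraph is superfluous, neither point affects the validity of your argument.
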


\begin{proof}
If $h\in \Nzero$ with $h=0$ a.e.~in $\Gamma_\d$, then~\eqref{reprNth} implies the existence of a $c\in \R$ such that $h=h_{c, 0}\in L^p(\Omega_\d)$. If $p \in [\frac{2}{1-s},\infty)>2$, it is clear that $h\in L^2(\Omega_\d)$. Otherwise, this property follows from 
\[
h_{c,0} \in \Ccg(c,0) = \Cfrak^{s,2, \d}(c,0)\subset L^2(\Omega_\d),
\]
see~\eqref{Cindependencep} below. We exploit $\int_\Omega h\,dx=0$ and $\supp(Q_\d^s)\subset B_\d(0)$ to find
\begin{align*}
\norm{\widehat{\mathbbm{1}_{\Omega_\d}h}}_{L^2(\R^n)} 
&= \norm{h}_{L^2(\Omega)}= \normlr{h-Q^s_\d \ast h - \abs{\O}^{-1}\int_\Omega h-Q^s_\d \ast h\,dx}_{L^2(\Omega)} \\ 
&\leq \norm{h-Q^s_\d*h}_{L^2(\Omega)} \leq \norm{\mathbbm{1}_{\Omega_\d}h-Q^s_\d*(\mathbbm{1}_{\Omega_\d}h)}_{L^2(\R^n)} \leq \norm{(1-\widehat{Q}^s_\d)\widehat{\mathbbm{1}_{\Omega_\d}h}}_{L^2(\R^n)}.
\end{align*}
Since $0 \leq 1-\widehat{Q}^s_\d <1$ (see~\eqref{int=1} and Remark~\ref{rem:scaling}\,b)), we deduce that $\widehat{\mathbbm{1}_{\Omega_\d}h}=0$, and hence, $h=0$, as stated.
\end{proof}
\begin{remark}[Uniqueness in $\Nzero$ with enlarged single layer]\label{rem:enlargedsinglelayer} 
The mean value condition in the previous proposition can be removed in exchange for replacing the Dirichlet condition in the single layer $\Gamma_\d$ by a Dirichlet condition in $\Omega_\d \setminus  \overline{O}$ for any $O \Subset \Omega$.
This means, if $h \in \Nzero$ satisfies $h=0$ a.e.~in $\Omega_\d \setminus \overline{O}$, then $h=0$. 

 Indeed, let $O'\subset \Omega$ be smooth with $O\Subset O' \Subset \Omega$ and take $\eta \in L^1(\R^n)$ with $\norm{\eta}_{L^{1}(\R^n)}=1$ and $\supp(\eta) \subset B_\eps(0)$ for $\eps >0$ sufficiently small. Since there is a $c \in \R$ such that $Q^s_\d *h=c$ a.e.~in $\O$ with $h=0$ a.e.~in $\O_\d \setminus \overline{O}$, we find that the convolution $h_{\eta}:=\eta*h$ satisfies  \begin{center} $Q^s_\d *h_{\eta}=c$ a.e.~in $O'$ with $h_{\eta}=0$ a.e.~in $O'_\d \setminus \overline{O'}$. \end{center} By the uniqueness statements in Theorem~\ref{th:Ncalchar} and Proposition~\ref{th:Ncalchar_plarge} (applied to the set $O'$), it follows that $h=h_\eta$ for all such $\eta$, which is only possible for $h=0$. 
\end{remark}

\begin{remark}[Equivalent norms on $\Nzero$]\label{rem:normonN} 
Based on the isomorphisms $\Psi_\d^s$ and $\Phi_\d^s$ we conclude that defining
\begin{align*}
\vertiii{h}_{\Nzero}:=\norm{h}_{L^p(\Gamma_\d)} + \Bigl|\int_\Omega Q^s_\d * h\, dx\Bigr| 
\end{align*}
and 
\begin{align*}
\vertiii{h}_{\Nzero}:=\norm{h}_{L^p(\Gamma_\d)} + \Bigl|\int_\Omega h\, dx\Bigr| 
\end{align*}
for $h\in \Nzero$ yields two norms on $\Nzero$ that are equivalent to $\norm{\cdot}_{L^p(\O_\d)}$ when $p\in (1,\frac{2}{1-s})$. 
\end{remark}

Finally, we study the case $p \in [\frac{2}{1-s},\infty)$. Before stating the corresponding representation result for $\Nzero$, we collect a few further observations about the solutions to (C) and (P). Suppose in the following that $c \in \R$ and $g\in L^p(\Gamma_\d)$. By Lemma~\ref{le:solpdo2}, the solution set $\Pg(c,g)$ has at most cardinality $1$; specifically, 
 \begin{align*}
\Pg(c,g) = \{w_{c,g}\}\cap H^{1-s, p}(\R^n). 
\end{align*}
Since $L^p(\Omega_\d)\subset L^q(\Omega_\d)$ for all $1<q\leq p$,  Lemma~\ref{le:solpdo2} implies also that $w_{c,g}\in H^{1-s, q}(\R^n)$ for all $q\in (1, \frac{2}{1-s})$. In particular, this shows that the solution sets of (P) are independent of the integrability parameters, in the sense that
\begin{align*}
\Pfrak^{s,q, \d}(c,g) = \Pfrak^{s,2, \d}(c,g) \quad \text{for all $q\in \textstyle (1, \frac{2}{1-s})$,}
\end{align*} 
so that we can conclude
\begin{align*}
\Pg(c,g) =  \Pfrak^{s,2, \d}(c,g) \cap H^{1-s, p}(\R^n).
\end{align*}

Considering the one-to-one relation between the solutions of (C) and (P) (see Lemma~\ref{le:equivpdo}), the above properties of $\Pg(c,g)$ carry over to $\Ccg(c,g)$. Hence,
\begin{align}\label{Cindependencep}
 \Cfrak^{s,q, \d}(c,g)=\Cfrak^{s,2,\d}(c,g)\quad \text{for all $q\in \textstyle (1, \frac{2}{1-s})$,}
\end{align}
and along with~\eqref{eq:psdhom}, 
 \begin{align}\label{Ccap}
\Ccg(c,g) =  \Cfrak^{s,2, \d}(c,g) \cap L^p(\Omega_\d). 
\end{align}
The latter gives rise to the next result. 

\begin{proposition} [Characterization of $\Nzero$ for $p \in [\frac{2}{1-s},\infty)$]\label{th:Ncalchar_plarge}
Let $p \in [\frac{2}{1-s},\infty)$ and let $\Omega$ be a bounded $C^{1,1}$-domain. 
Then,
\begin{align*}
\Nzero =N^{s, 2,\d}(\Omega)\cap L^p(\Omega_\d),
\end{align*}
where $N^{s,2,\d}(\Omega)$ can be characterized as in Theorem~\ref{th:Ncalchar}. 
\end{proposition}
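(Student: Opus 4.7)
The plan is that this proposition follows essentially by combining the earlier observations with a careful set-theoretic manipulation. My starting point is the identity $\Nzero = \Ccg(\Omega) = \bigcup_{c \in \R,\, g \in L^p(\Gamma_\d)} \Ccg(c,g)$ from \eqref{Nzero0}, together with the identity $\Ccg(c,g) = \Cfrak^{s,2,\d}(c,g) \cap L^p(\Omega_\d)$ from \eqref{Ccap}, which are both available without any restriction on the regime of $p$.

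First, I would combine these to write
\[
\Nzero \;=\; \bigcup_{c \in \R,\, g \in L^p(\Gamma_\d)} \bigl[\Cfrak^{s,2,\d}(c,g) \cap L^p(\Omega_\d)\bigr] \;=\; \biggl[\bigcup_{c \in \R,\, g \in L^p(\Gamma_\d)} \Cfrak^{s,2,\d}(c,g)\biggr] \cap L^p(\Omega_\d),
\]
which is simply distributivity of intersection over union. Next, I would argue that the union on the right-hand side is unchanged when we replace $L^p(\Gamma_\d)$ by $L^2(\Gamma_\d)$ as the index set. For the inclusion ``$\subseteq$'', the key point is that $p \geq \frac{2}{1-s} > 2$ (since $s>0$), so on the bounded set $\Gamma_\d$ we have $L^p(\Gamma_\d) \subset L^2(\Gamma_\d)$. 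For the reverse inclusion, if $h \in \Cfrak^{s,2,\d}(c,g) \cap L^p(\Omega_\d)$ with $g \in L^2(\Gamma_\d)$, then by definition of problem~(C) the function $g$ equals the restriction $h|_{\Gamma_\d}$ a.e., which lies in $L^p(\Gamma_\d)$ because $h \in L^p(\Omega_\d)$; hence the element $h$ actually belongs to the union indexed by $L^p(\Gamma_\d)$.

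With this exchange performed, applying Theorem~\ref{th:Ncalchar} in the case $p=2$ (which falls into the range $(1,\frac{2}{1-s})$ since $s>0$) identifies the bracketed union with $N^{s,2,\d}(\Omega)$, yielding $\Nzero = N^{s,2,\d}(\Omega) \cap L^p(\Omega_\d)$.

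There is no real obstacle here; the statement is essentially a bookkeeping consequence of the key analytic input already captured in \eqref{Ccap} (which in turn uses Lemma~\ref{le:equivpdo} and the $p$-independence of the solution set of (P) up to membership in the relevant space). The only subtlety to watch is the equivalence $g = h|_{\Gamma_\d}$ built into problem~(C), which is what allows the boundary datum $g$ to inherit $L^p$-integrability automatically from $h \in L^p(\Omega_\d)$, so that the union over $g \in L^p(\Gamma_\d)$ and the union over $g \in L^2(\Gamma_\d)$ intersect $L^p(\Omega_\d)$ in the same set.
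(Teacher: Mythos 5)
Your proposal is correct and follows essentially the same route as the paper, which simply cites \eqref{eq:Nunion}, \eqref{Nzero0}, and \eqref{Ccap} and leaves the set-theoretic bookkeeping (including the exchange of the index set $L^p(\Gamma_\d)$ for $L^2(\Gamma_\d)$ via $g=h|_{\Gamma_\d}$) implicit. Your write-up just makes those omitted details explicit.
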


\begin{proof}
The combination of \eqref{eq:Nunion}, \eqref{Nzero0}, and \eqref{Ccap} proves the claim.
\end{proof}

 \begin{remark}\label{rem:plargebcs}
The maps  $\Phi_\d^s$ and $\Psi_\d^s$ from \eqref{homPhi} and~\eqref{homPsi} can be defined analogously when $p \in [\frac{2}{1-s},\infty)$. While Proposition~\ref{th:Ncalchar_plarge} shows that they are still injective, surjectivity generally fails in view of Remark~\ref{rem:Grubbreg}. This shows that not all boundary values in $L^p(\Gamma_\d)$ can be attained by functions in $\Nzero$, in contrast to the case $p\in (1, \frac{2}{1-s})$.
\end{remark}

\color{black}
\subsection{Regularity properties of functions with zero nonlocal gradient and examples}\label{subsec:regulex}
In this section, we dive deeper into some of the properties of functions with zero nonlocal gradient, such as their regularity, and we will show some numerical examples to illustrate how they generally behave.

We start off by showing that all functions in $\Nzero$ are smooth inside $\Omega$.

\begin{corollary}[Functions with vanishing nonlocal gradient are smooth in $\Omega$]
 Let $p\in (1, \infty)$, then every $h\in \Nzero$ satisfies $$h|_\Omega\in C^\infty(\Omega).$$
 \end{corollary}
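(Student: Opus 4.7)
The plan is to combine the translation machinery from Section~\ref{subsec:translation} with the pseudolocal property of classical pseudo-differential operators. Recall from the discussion at the start of Section~\ref{subsec:characterization} that $h \in \Nzero$ precisely when $\Qcal_\d^s h = Q_\d^s \ast h$ is constant on $\Omega$, say, equal to some $c \in \R$. I set $w := \Qcal_\d^s(\mathbbm{1}_{\Omega_\d} h)$. The boundedness of $\Qcal_\d^s : L^p(\R^n) \to H^{1-s,p}(\R^n)$, which follows from~\eqref{eq:psdhom} applied with $t = 1-s$, yields $w \in H^{1-s,p}(\R^n)$, and by construction $w = c$ almost everywhere on $\Omega$.

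The next step is to invoke Lemma~\ref{lem:pdoproperties}, which identifies $\Pcal_\d^s$ as a classical pseudo-differential operator. Such operators are pseudolocal, meaning that the singular support of $\Pcal_\d^s w$ is contained in the singular support of $w$. Since $w$ is constant, and in particular $C^\infty$, on the open set $\Omega$, the singular support of $w$ does not meet $\Omega$, and therefore neither does the singular support of $\Pcal_\d^s w$. Consequently $\Pcal_\d^s w \in C^\infty(\Omega)$.

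Finally, the inversion statement in~\eqref{eq:psdhom}, namely $(\Pcal_\d^s)^{-1} = \Qcal_\d^s$, yields $\Pcal_\d^s w = \mathbbm{1}_{\Omega_\d} h$ in $L^p(\R^n)$, so in particular $h = \Pcal_\d^s w$ almost everywhere on $\Omega$. Combined with the previous step, this identifies $h|_\Omega$ with a $C^\infty$ function on $\Omega$, giving the claim.

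The only conceptual step is the appeal to pseudolocality; all analytic hypotheses on $\Pcal_\d^s$ have already been verified in Lemma~\ref{lem:pdoproperties}, so the argument reduces cleanly to a standard property of classical pseudo-differential operators and requires no new estimates or regularity theory.
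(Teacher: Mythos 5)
Your proof is correct, and it reaches the conclusion by a somewhat different and arguably leaner route than the paper. The paper first reduces to $h\in\Ccg(0,g)$, invokes the equivalence with the pseudo-differential Dirichlet problem (Lemma~\ref{le:equivpdo}) to write $h=(\Pcal_\d^s w)|_{\Omega_\d}$ with $w$ supported in $\overline{\Gamma'}$, i.e.\ \emph{vanishing} on $\Omega$, and then proves smoothness by hand from the explicit kernel representation $\Pcal_\d^s w=\Div V_\delta^s\ast w$, exploiting that $\Div V_\delta^s$ is smooth away from the origin. You instead take $w=\Qcal_\d^s(\mathbbm{1}_{\Omega_\d}h)$, which is merely \emph{constant} on $\Omega$, and outsource the smoothness step to the pseudolocality of classical pseudo-differential operators, whose hypotheses are already verified in Lemma~\ref{lem:pdoproperties}. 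This bypasses the (C)/(P) machinery and the auxiliary set $\Gamma'$ entirely; what it costs is the appeal to the abstract pseudolocality theorem, which the paper's explicit convolution argument essentially reproves in the special case at hand. Two small points worth making explicit: $w$ is compactly supported (since $Q_\d^s$ and $\mathbbm{1}_{\Omega_\d}h$ are), so pseudolocality applies in its most standard form on $\mathcal{E}'(\R^n)$; and the boundedness of $\Qcal_\d^s:L^p(\R^n)\to H^{1-s,p}(\R^n)$ and the identity $\Pcal_\d^s w=\mathbbm{1}_{\Omega_\d}h$ both follow from~\eqref{eq:psdhom} with $t=1-s$, exactly as you say.
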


\begin{proof}
It suffices to prove the statement for $h \in \Ccg(0,g)$ with $g \in L^p(\Gamma_\d)$, given~\eqref{eq:Nunion} and the fact that $\Ccg(c,g)=\Ccg(0,g-c)+c$ for all $c \in \R$. If $h\in \Ccg(0,g)$, we deduce from~Lemma~\ref{le:equivpdo} that 
\[
h=(\Pcal^s_\d w)|_{\Omega_{\delta}} \quad \text{for a $w \in \Pg(0,g)\subset H^{1-s, p}_0(\Gamma')$.} 
\] 
To see that the restriction $(\Pcal_\delta^s w)|_\Omega$ is smooth, we argue as follows. For any $\eps>0$ sufficiently small and $\psi\in C^\infty(\R^n)$ with $\psi =\Div V_\d^s$ on $B_\eps(0)^{c}$, it holds that
\begin{align*}
\Pcal^s_\d w = \Div V_\delta^s \ast w= \psi*w \quad \text{on $\R^n\setminus (\supp (w))_\eps$;}
\end{align*} 
this follows from~\eqref{eq:vsdrepr} via integration by parts, along with an approximation argument. Consequently, $\Pcal_\delta^s w$ can be expressed as the convolution  of a compactly supported $L^p$-function with a smooth function on $\R^n\setminus (\Omega^c)_\eps$ for any $\eps$, and is thus smooth on the union of all these sets, which is $\Omega$.
\end{proof}

In general, we do not expect that functions in $\Nzero$ will be regular on the larger domain $\Omega_\d$ given Remark~\ref{rem:Grubbreg}, see also~Figure~\ref{fig:zoom} and \ref{fig:Nzero} below. However, there do exist smooth non-constant functions in $\Nzero$ (cf.~Proposition~\ref{prop:nonconstant}) and they are exactly those that can be obtained from the translation mechanism.

\begin{proposition}[Functions in $\Nzero$ with extra regularity]\label{prop:extraregularity}
Let $p \in [1,\infty]$, then it holds that
\[
N^{s,p,\d}(\Omega) \cap H^{s,p,\d}(\R^n)|_{\Omega_\d} = \{\Pcal^s_\d v|_{\Omega_\d} \,:\, v \in W^{1,p}(\R^n) \ \text{with} \ \nabla v = 0 \ \text{a.e.~on $\Omega$}\},
\]
and
\[
N^{s,p,\d}(\Omega) \cap C_c^{\infty}(\R^n)|_{\Omega_\d}= \{\Pcal^s_\d v|_{\Omega_\d} \,:\, v \in C_c^{\infty}(\R^n) \ \text{with} \ \nabla v = 0 \ \text{on $\Omega$}\}.
\]
\end{proposition}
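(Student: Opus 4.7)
The plan is to derive both identities directly from the translation mechanism (Lemmas~\ref{le:translation1} and~\ref{le:translation2}), using the essentially trivial observation that for $x \in \Omega$ the nonlocal gradient $D^s_\d f(x)$ depends only on the values of $f$ on $B_\d(x) \subset \Omega_\d$. In particular, if $\tilde{h}\in H^{s,p,\d}(\R^n)$ is any extension of $h\in N^{s,p,\d}(\Omega)$, then $D^s_\d \tilde h = D^s_\d h = 0$ a.e.~on $\Omega$.

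For the first identity, the inclusion ``$\supset$'' is immediate: if $v\in W^{1,p}(\R^n)$ with $\nabla v = 0$ a.e.~on $\Omega$, then Lemma~\ref{le:translation2} gives $\Pcal^s_\d v \in H^{s,p,\d}(\R^n)$ and $D^s_\d(\Pcal^s_\d v) = \nabla v = 0$ a.e.~on $\Omega$, so $\Pcal^s_\d v|_{\Omega_\d}\in N^{s,p,\d}(\Omega)$. Conversely, suppose $h\in N^{s,p,\d}(\Omega)$ and $h=\tilde h|_{\Omega_\d}$ for some $\tilde h\in H^{s,p,\d}(\R^n)$. Setting $v:=\Qcal^s_\d \tilde h\in W^{1,p}(\R^n)$ (Lemma~\ref{le:translation1}), we have $\nabla v = D^s_\d \tilde h$ on $\R^n$, which vanishes a.e.~on $\Omega$ by the observation above. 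Lemma~\ref{le:translation2} then yields $\Pcal^s_\d v = \Pcal^s_\d \Qcal^s_\d \tilde h = \tilde h$, so that $h = \Pcal^s_\d v|_{\Omega_\d}$.

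For the second identity the argument is entirely parallel, with the additional task of tracking the smoothness and compactness of supports. For ``$\subset$'', given $h = \tilde h|_{\Omega_\d}$ with $\tilde h \in C_c^\infty(\R^n)$, the candidate $v := Q^s_\d * \tilde h$ is smooth because $\partial^\alpha v = Q^s_\d * \partial^\alpha \tilde h$ is continuous for every multi-index $\alpha$, and it has compact support contained in $\supp \tilde h + B_\d(0)$; hence $v\in C_c^\infty(\R^n)$ with $\nabla v = 0$ on $\Omega$ as before. For ``$\supset$'', given $v\in C_c^\infty(\R^n)$ with $\nabla v=0$ on $\Omega$, the inclusion in the first identity already gives membership in $N^{s,p,\d}(\Omega)$; it remains to show $\Pcal^s_\d v|_{\Omega_\d}\in C_c^\infty(\R^n)|_{\Omega_\d}$. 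Since $\widehat{Q^s_\d}$ is smooth and strictly positive on $\R^n$ with $1/\widehat{Q^s_\d}$ of polynomial growth (of order $1-s$), and $\widehat v \in \Scal(\R^n)$, we obtain $\widehat{\Pcal^s_\d v} = \widehat v/\widehat{Q^s_\d} \in \Scal(\R^n)$, so $\Pcal^s_\d v\in \Scal(\R^n)$ is smooth on $\R^n$. Multiplying by any cut-off $\eta\in C_c^\infty(\R^n)$ with $\eta\equiv 1$ on $\overline{\Omega_\d}$ yields an element of $C_c^\infty(\R^n)$ agreeing with $\Pcal^s_\d v$ on $\Omega_\d$, as needed.

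The one slightly delicate point is the last one, namely arguing that $\Pcal^s_\d v$ is smooth when $v\in C_c^\infty(\R^n)$, since $\Pcal^s_\d$ is only defined via Fourier multipliers or via Lemma~\ref{le:translation2} as an isomorphism on $W^{1,p}$; passing through $\Scal(\R^n)$ via the smoothness and positivity of $\widehat{Q^s_\d}$ resolves this cleanly. Apart from this, everything is a direct application of the translation identities and the locality of the kernel $\supp(Q^s_\d)\subset B_\d(0)$.
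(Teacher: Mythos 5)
Your proof is correct and follows essentially the same route as the paper: both directions of each identity are obtained from the translation isomorphisms $\Qcal^s_\d$ and $\Pcal^s_\d$ of Lemmas~\ref{le:translation1} and~\ref{le:translation2}, together with the locality of $Q^s_\d$ and the observation $C_c^\infty(\R^n)|_{\Omega_\d}=C^\infty(\R^n)|_{\Omega_\d}$. The only cosmetic difference is that for the smoothness of $\Pcal^s_\d v$ you pass through the Schwartz-space mapping property of the Fourier multiplier in~\eqref{Pdeltas}, whereas the paper invokes the kernel representation~\eqref{eq:vsdrepr}; both are valid.
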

\begin{proof}
Let $u \in N^{s,p,\d}(\Omega) \cap H^{s,p,\d}(\R^n)|_{\Omega_\d}$ and consider an extension $\tilde{u} \in H^{s,p,\d}(\R^n)$ of $u$. Then, we find by Lemma~\ref{le:translation1} that $v:=\Qcal^s_\d \tilde{u} \in W^{1,p}(\R^n)$ with $\nabla v = D^s_\d u = 0$ a.e.~on $\Omega$. Moreover, by Lemma~\ref{le:translation2} we find that
\[
\Pcal^s_\d v|_{\Omega_\d} = \tilde{u}|_{\Omega_\d} = u,
\]
as desired. On the other hand, if $u = \Pcal^s_\d v|_{\Omega_\d}$ for $v \in W^{1,p}(\R^n)$ with $\nabla v = 0$ a.e.~on $\Omega$, then $u \in H^{s,p,\d}(\R^n)|_{\Omega_\d}$ and $D^s_\d u = \nabla \Qcal^s_\d u = \nabla v =0$ a.e.~on $\Omega$. This proves the first identification.

For the smooth case we can argue in the same way, by also using that $\Qcal^s_\d$ and $\Pcal^s_\d$ map smooth functions to smooth functions (cf.~\eqref{eq:vsdrepr}). Note that $\Pcal^s_\d$ might not preserve the compact support, but this is not an issue since $C_c^{\infty}(\R^n)|_{\Omega_\d}=C^{\infty}(\R^n)|_{\Omega_\d}$.
\end{proof}

We close this section with an illustration of selected one-dimensional examples of functions with zero nonlocal gradient. Figure~\ref{fig:zoom} depicts a numerical approximation of the unique function $h_{c,g}\in \Cfrak^{s,2,\d}(c,g)$ with $c=0$ and $g \equiv -1$ on $\Gamma_\d$. 
While  $h_{c,g} \in L^q(\Omega_\d)$ for all $q \in (1,\frac{2}{1-s})$ according to \eqref{Cindependencep}, we see in the first plot that this function has a jump singularity at the boundary of the domain $\Omega=(-3,3)$. This indicates that $h_{c, g}$ might not lie in $L^p(\Omega_\d)$ for all $p \in (1,\infty)$, reflecting the observations from Remark~\ref{rem:Grubbreg} and \ref{rem:plargebcs}. Moreover, while one may expect from the first illustration that the function is constant on a sub-interval of $(-3,3)$, the enlarged plots show that this is not the case. Indeed, $h_{c,g}$ seems to be displaying oscillations with decreasing amplitude, which is in line with the fact that functions in $\Nzero$ need not be constant on any subset of $\Omega$ (cf.~Proposition~\ref{prop:nonconstant}). It is an interesting topic for further study to understand these oscillatory patterns better, and to see if all non-constant functions in $\Nzero$ have a similar behavior.
\begin{figure}[h]
\noindent\makebox[\textwidth]{
\begin{tabular}{c c}
\includegraphics[scale=0.47]{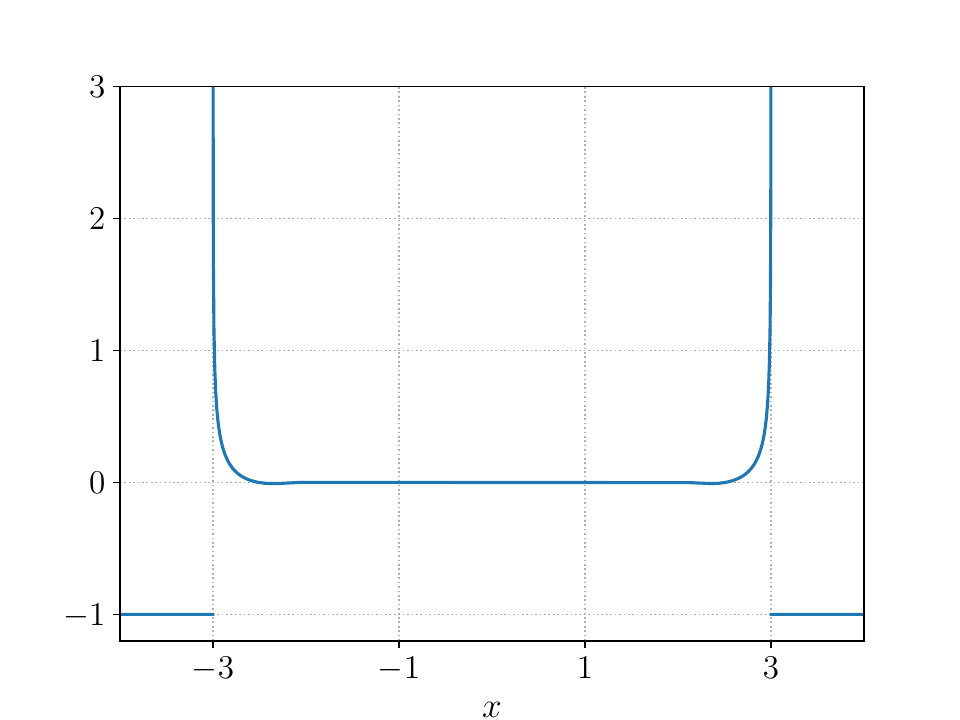} & \includegraphics[scale=0.47]{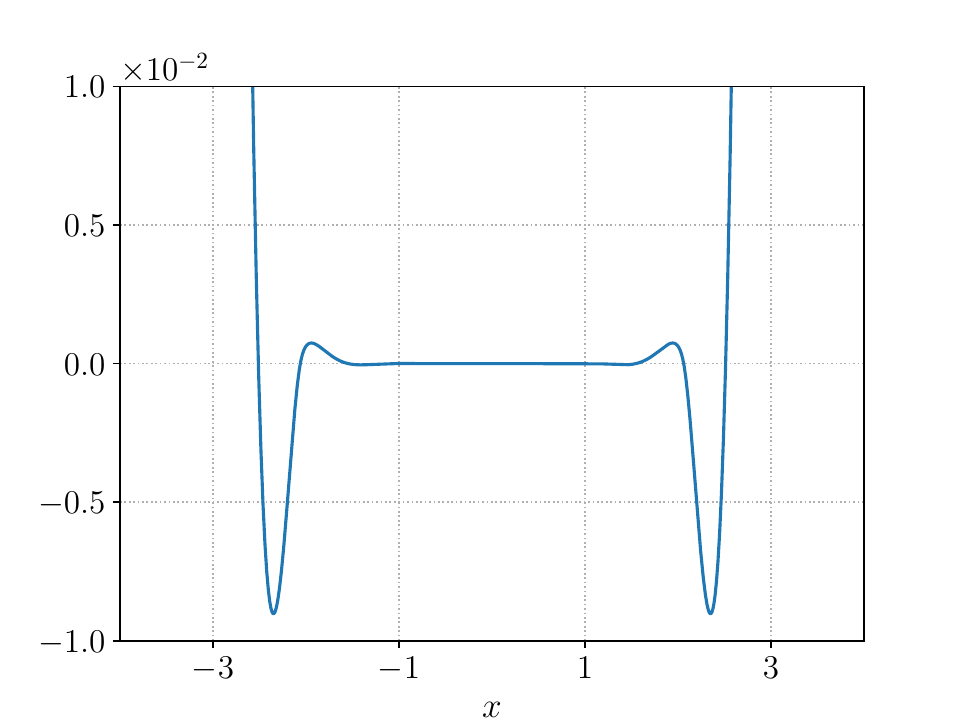}  \\
\includegraphics[scale=0.47]{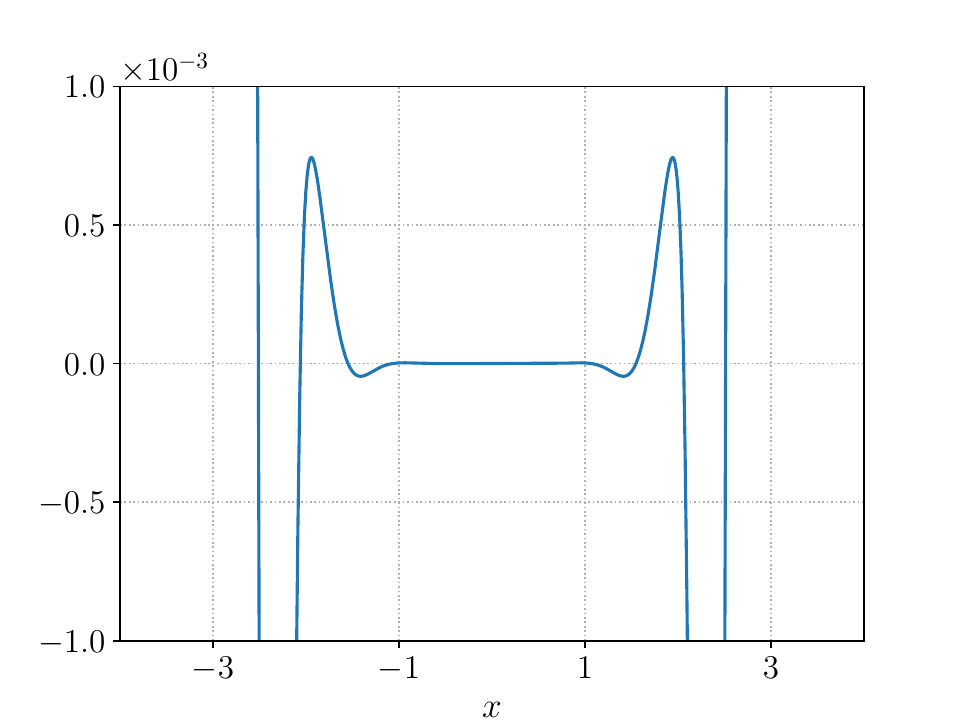} & \includegraphics[scale=0.47]{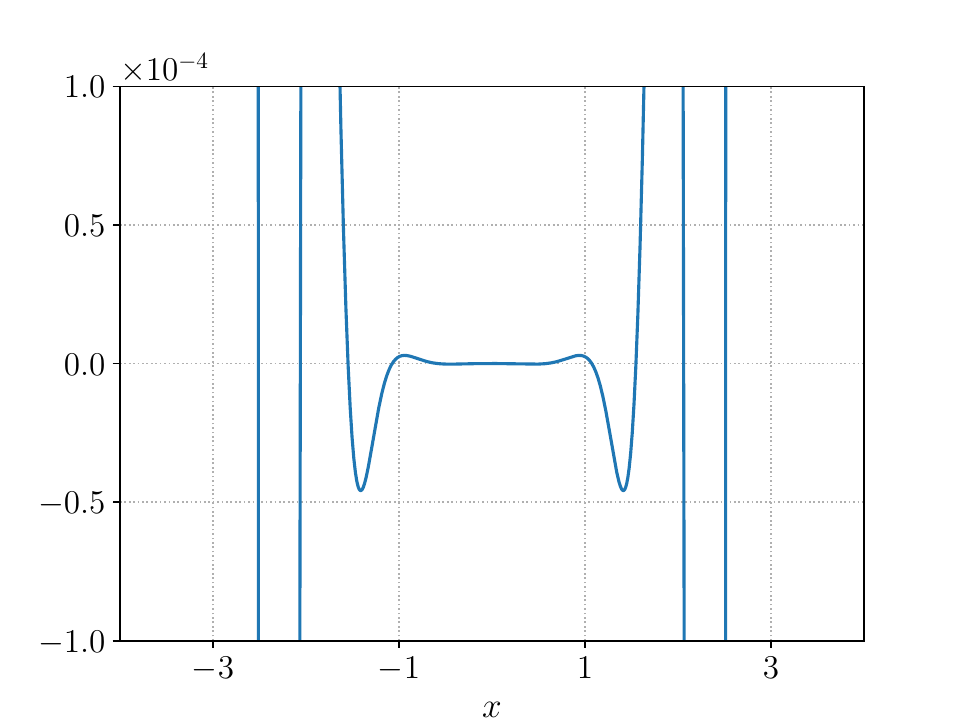}
\end{tabular}}
\caption{Numerical approximation of the function $h_{c,g} \in N^{s,2,\d}(\Omega)$ for $c=0$ and $g\equiv -1$ on $\Gamma_\d$ with increasing degrees of zoom. The parameters for the computation are $n=1$, $\Omega=(-3,3)$, $s=\frac{1}{2}$, $\d=1$ and $w_\d \in C_c^{\infty}(-1,1)$ is a bump function equal to $1$ on $(-\frac{1}{2},\frac{1}{2})$.}\label{fig:zoom}
\end{figure}

In Figure~\ref{fig:Nzero}, there are two further examples of functions with zero nonlocal gradient. The left-hand example is similar to the one from Figure~\ref{fig:zoom}, but with different boundary values. It still features jump singularities at the boundary, and is nearly constant away from the boundary. The right-hand example in Figure~\ref{fig:Nzero} shows a function with zero nonlocal gradient  constructed via the characterization in Proposition~\ref{prop:extraregularity}. In contrast to the other examples, this one does not have a jump singularity at the boundary. By construction, it is smooth and an element of $\Nzero$ for all $p \in [1,\infty]$.

\begin{figure}[h!]
\noindent\makebox[\textwidth]{
\begin{tabular}{c c}
\includegraphics[scale=0.47]{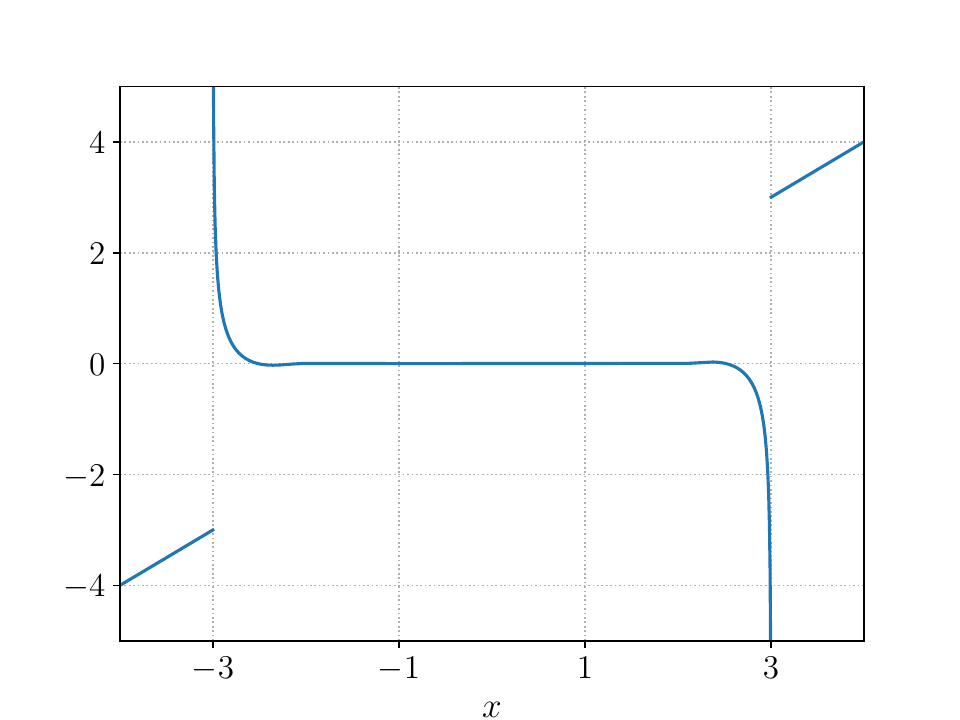} & \includegraphics[scale=0.47]{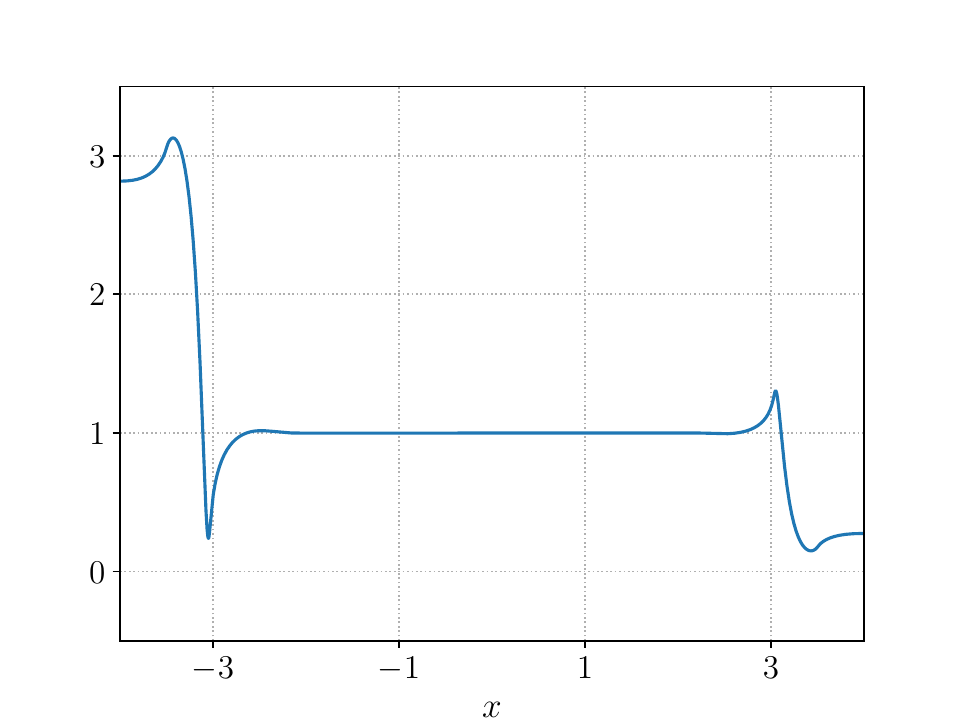}
\end{tabular}}
\caption{Left: A numerical approximation of the function $h_{c,g} \in N^{s,2,\d}(\Omega)$ with $c=0$ and $g(x)=x$ for $x \in \Gamma_\d$. Right: A plot of $\Pcal^s_\d v|_{\Omega_\d} \in \bigcap_{p \in [1,\infty]}\Nzero$ with $v(x)=1+5\phi(x+4)-2\phi(x-4)$ for a non-negative bump function $\phi \in C_c^{\infty}(-1,1)$. The parameters are the same as in Figure~\ref{fig:zoom}.}\label{fig:Nzero}

\end{figure}

\section{Technical tools involving functions with zero nonlocal gradient}\label{sec:tools}

In this section, we present several results regarding the function spaces $H^{s,p,\d}(\O)$ in which the set $\Nzero$ plays an important role. We start off with a bounded-domain analogue of the isomorphism between $H^{s,p,\d}(\R^n)$ and $W^{1,p}(\R^n)$ from~\cite[Section~2.4]{CKS23} that turns nonlocal gradients into gradients. Subsequently, we study extensions of functions in $H^{s, p, \delta}(\Omega)$ to the whole space $\R^n$ and prove new nonlocal Poincar\'{e}  and Poincar\'{e}-Wirtinger inequalities and compactness results.

\subsection{Connection between classical and nonlocal Sobolev spaces}\label{sec:app1}
As we know from Section~\ref{subsec:translation}, the translation operators $\Qcal_\delta^s$ and its inverse $\Pcal^s_\d$ provide a isomorphism between the spaces $H^{s,p,\d}(\R^n)$ and $W^{1,p}(\R^n)$  with the properties $\nabla \circ \Qcal^s_\d=D_\delta^s$ and $D_\delta^s\circ \Pcal_\delta^s =\nabla$. On a bounded open set $\Omega \subset \R^n$, it still holds for $u \in H^{s,p,\d}(\Omega)$ that $\Qcal^s_\d u$ lies in $W^{1,p}(\Omega)$ with $\nabla (\Qcal^s_\d u) = D^s_\d u$ (cf.~Lemma~\ref{le:translation1}), but $\Pcal^s_\d$ is not defined on $W^{1,p}(\Omega)$, which prevents an identification with $H^{s, p,\d}(\Omega)$ in analogy to the setting on the whole space $\R^n$. 

It turns out that one can resolve this issue and find  a perfect translation mechanism also between the classical and nonlocal Sobolev spaces on bounded sets, by considering the spaces modulo the functions with zero (nonlocal) gradient. In this spirit, our next theorem gives a natural generalization of Lemma~\ref{le:translation1} and~\ref{le:translation2}, cf.~also~\cite[Section~2.4]{CKS23}.

To state the result precisely, let us introduce the quotient spaces
\[
\widetilde{H}^{s,p,\d}(\Omega):=H^{s,p,\d}(\Omega)/\Nzero \quad \text{and}\quad \widetilde{W}^{1,p}(\Omega):=W^{1,p}(\Omega)/\Ccal(\Omega),
\]
where $\Ccal(\Omega):=\{v \in L^p(\Omega)\,:\, v \ \text{is constant}\}$; 
for the equivalence classes in $\widetilde{H}^{s,p,\d}(\Omega)$, we write $[u]_\delta^s=u+\Nzero$ with a representative in $u\in H^{s,p, \d}(\Omega)$, and analogously, $[v]=v+\Ccal(\Omega)$ with $v\in W^{1,p}(\Omega)$ for elements in $\widetilde{W}^{1,p}(\Omega)$.  
We endow these spaces with the norms given by
\begin{align}\label{norms}
\normb{[u]_\delta^s}_{\widetilde{H}^{s,p, \d}(\Omega)} :=  \norm{D_\delta^su}_{L^p(\Omega;\R^n)} \quad \text{and}\quad
\normb{[v]}_{\widetilde{W}^{1,p}(\Omega)} :=  \norm{\nabla v}_{L^p(\Omega;\R^n)}, 
\end{align}
noting that $D^s_\d u$ and $\nabla v$ are both independent of the chosen representative of $[u]_\delta^s$ and $[v]$, respectively. Moreover, let $\widetilde{D}^s_\d [u]_\delta^s := D^s_\d u$ and $\widetilde{\nabla} [v]:= \nabla v$ for $u\in H^{s,p,\d}(\Omega)$ and $v \in W^{1,p}(\Omega)$, respectively, where the choice of representative is irrelevant. 

\begin{theorem}[Isomorphism between $\Hquo$ and $\Wquo$] \label{th:connecquo}
Let $p \in [1,\infty]$. Then, the linear map
\begin{align*}
&\widetilde{\Qcal}_\delta^s:\Hquo \to \Wquo, \quad  [u]_\delta^s\mapsto [\Qcal_\delta^s u]
\end{align*}
defines a isometric isomorphism, and it holds with $\widetilde\Pcal_\delta^s:=(\widetilde\Qcal_\delta^s)^{-1}$ that
 \begin{align}\label{identities_quotientspace}
 \widetilde{\nabla}  \circ \widetilde \Qcal_\delta^s = \widetilde{D}_\delta^s \quad \text{and}\quad  \widetilde{D}_\delta^s  \circ \widetilde \Pcal_\delta^s = \widetilde{\nabla}.
 \end{align}
\end{theorem}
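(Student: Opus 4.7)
The plan is to verify, in turn, well-definedness, the first identity in~\eqref{identities_quotientspace}, the isometry property, and then bijectivity of $\widetilde{\Qcal}_\delta^s$; the second identity in~\eqref{identities_quotientspace} will follow formally from the resulting inverse. The main tools are the translation identities from Lemmas~\ref{le:translation1} and~\ref{le:translation2}, together with a $W^{1,p}$-extension argument on the Lipschitz domain $\Omega$ to handle surjectivity.

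For $u\in H^{s,p,\d}(\Omega)$, Lemma~\ref{le:translation1} yields $\Qcal_\delta^s u\in W^{1,p}(\Omega)$ with $\nabla(\Qcal_\delta^s u)=D_\delta^s u$ in $\Omega$. If $u_1-u_2\in \Nzero$, then $\nabla(\Qcal_\delta^s u_1-\Qcal_\delta^s u_2)=D_\delta^s(u_1-u_2)=0$ a.e.~in $\Omega$, so by connectedness of $\Omega$ the difference lies in $\Ccal(\Omega)$; hence $[\Qcal_\delta^s u_1]=[\Qcal_\delta^s u_2]$, and the map $\widetilde{\Qcal}_\delta^s$ is well-defined and linear. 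The identity $\widetilde{\nabla}\circ\widetilde{\Qcal}_\delta^s=\widetilde{D}_\delta^s$ is immediate from $\nabla \Qcal^s_\d u = D^s_\d u$, while the isometry $\|[\Qcal_\delta^s u]\|_{\Wquo}=\|\nabla \Qcal_\delta^s u\|_{L^p(\Omega;\R^n)}=\|D_\delta^s u\|_{L^p(\Omega;\R^n)}=\|[u]_\delta^s\|_{\Hquo}$ follows directly from~\eqref{norms}.

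Injectivity is quick: if $\widetilde{\Qcal}_\delta^s[u]_\delta^s=0$, then $\Qcal_\delta^s u$ is constant on $\Omega$, so $D_\delta^s u=\nabla\Qcal_\delta^s u=0$ in $\Omega$, i.e., $u\in\Nzero$. For surjectivity, let $v\in W^{1,p}(\Omega)$ and use the standard $W^{1,p}$-extension property for bounded Lipschitz domains to pick some $\widetilde{v}\in W^{1,p}(\R^n)$ with $\widetilde{v}|_\Omega=v$. I then set $u:=(\Pcal_\delta^s \widetilde{v})|_{\Omega_\delta}\in H^{s,p,\d}(\Omega)$, which is legitimate since $\Pcal_\delta^s:W^{1,p}(\R^n)\to H^{s,p,\d}(\R^n)$ by Lemma~\ref{le:translation2}. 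The property $\supp(Q_\delta^s)\subset B_\delta(0)$ ensures that, for every $x\in\Omega$, the convolution $\Qcal_\delta^s u(x)$ only sees values of $\Pcal_\delta^s \widetilde{v}$ on $B_\delta(x)\subset\Omega_\delta$, so $\Qcal_\delta^s u(x)=(Q_\delta^s*\Pcal_\delta^s\widetilde{v})(x)=\widetilde{v}(x)=v(x)$, where the middle equality uses $\Qcal_\delta^s\circ\Pcal_\delta^s=\mathrm{id}$ from Lemma~\ref{le:translation2}. Hence $\widetilde{\Qcal}_\delta^s[u]_\delta^s=[v]$.

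The only mild subtlety is in the surjectivity step: $\Pcal_\delta^s$ is defined only on functions on all of $\R^n$, which forces the extension, and one must check that the restriction-then-convolution genuinely recovers $v$ on $\Omega$—this is where the compact support of $Q_\delta^s$ enters decisively. Once all this is in place, $\widetilde{\Qcal}_\delta^s$ is a bijective linear isometry, its inverse $\widetilde{\Pcal}_\delta^s$ is likewise a linear isometry, and the identity $\widetilde{D}_\delta^s\circ\widetilde{\Pcal}_\delta^s=\widetilde{\nabla}$ is obtained by applying the already-established $\widetilde{\nabla}\circ\widetilde{\Qcal}_\delta^s=\widetilde{D}_\delta^s$ to $\widetilde{\Pcal}_\delta^s[v]$.
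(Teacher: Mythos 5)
Your proof is correct and follows essentially the same route as the paper: well-definedness and the isometry from $\nabla\circ\Qcal_\delta^s=D_\delta^s$, and inversion by extending $v$ to $W^{1,p}(\R^n)$, applying $\Pcal_\delta^s$, and restricting to $\Omega_\delta$. The only cosmetic difference is that you verify surjectivity by computing $\Qcal_\delta^s u=v$ directly on $\Omega$ via the compact support of $Q_\delta^s$, whereas the paper checks the two-sided inverse at the level of (nonlocal) gradients; both are fine.
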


\begin{proof}
Note first that $\widetilde{\Qcal}_\d^s$ is well-defined since $\Qcal^s_\d h$ is constant for any $h \in \Nzero$. The first identity in~\eqref{identities_quotientspace} follows immediately from $\nabla \circ \Qcal^s_\d=D_\delta^s$, and we can compute that 
\[
\norm{\widetilde{\Qcal}_\delta^s [u]_\delta^s}_{\Wquo} = \norm{\nabla (\Qcal^s_\d u)}_{L^p(\Omega;\R^n)} = \norm{D^s_\d u}_{L^p(\Omega;\R^n)}  =  \norm{[u]_\delta^s}_{\widetilde{H}^{s,p,\d}(\Omega)}
\]
for all $u\in H^{s, p, \delta}(\Omega)$, which shows that $\widetilde{\Qcal}_\delta^s$ is an isometry. 
To prove the bijectivity, we claim that the inverse of $\widetilde{\Qcal}_\d^s$ is given by 
\[
\widetilde{\Pcal}_\d^s[v] =\big[ \Pcal^s_\d(\Ecal v)|_{\Omega_\d} \big]_\delta^s \quad \text{for $v\in W^{1,p}(\Omega)$,}
\]
where $\Ecal:W^{1,p}(\Omega) \to W^{1,p}(\R^n)$ is any bounded linear extension operator. Indeed, it holds that $$D^s_\d(\widetilde{\Pcal}_\d^s[v]) =D_\delta^s (\Pcal_\delta^s(\Ecal v)|_{\Omega_\d}) = \nabla (\Ecal v) |_{\Omega}=\nabla v \quad\text{ for $v\in W^{1,p}(\Omega)$},$$ 
from which we infer the second part of~\eqref{identities_quotientspace}, as well as $\widetilde{\Pcal}_\delta^s \circ \widetilde{\Qcal}_\delta^s = \Id$ and $ \widetilde{\Qcal}_\delta^s\circ \widetilde{\Pcal}_\delta^s=\Id$.

\end{proof}

\begin{remark}
\label{rem:quotientnorm}
The boundedness of $\widetilde{\Qcal}^s_\d$ and $\widetilde{\Pcal}^s_\d$ holds as well, if $\widetilde{H}^{s, p, \delta}(\Omega)$ is equipped with the associated quotient norm, i.e.,
\[
\vertiii{[u]_\delta^s}_{\widetilde{H}^{s,p,\d}(\Omega)} :=
\inf_{h\in \Nzero} \norm{u-h}_{L^p(\Omega_\delta)}+  \norm{D^s_\d u}_{L^p(\Omega;\R^n)} \]
for $u\in H^{s, p, \d}(\Omega)$. Indeed, for $\widetilde{\Qcal}^s_\d$ this is clear, whereas for $\widetilde{\Pcal}^s_\d$ we can compute for $v \in W^{1,p}(\Omega)$ with $\int_\Omega v\,dx=0$ that
\begin{align*}
\vertiii{\widetilde{\Pcal}^s_\d[v]}_{\widetilde{H}^{s,p,\d}(\Omega)} &\leq \norm{\Pcal^s_\d(\Ecal v)}_{L^p(\Omega_\d)} + \norm{\nabla v}_{L^p(\Omega;\R^n)} \\
&\leq C\norm{\Ecal v}_{W^{1,p}(\R^n)} + \norm{\nabla v}_{L^p(\Omega;\R^n)} \\
&\leq C\norm{v}_{W^{1,p}(\Omega)} + \norm{\nabla v}_{L^p(\Omega;\R^n)} \leq C\norm{[v]}_{\widetilde{W}^{1,p}(\Omega)},
\end{align*}
where the second inequality uses Lemma~\ref{le:translation2}, and the last the classical Poincar\'{e}-Wirtinger inequality. Moreover, for $p \in (1,\infty)$, the operator norm of $\widetilde{\Pcal}^s_\d$ is independent of $s$ by \eqref{eq:uniformpbound}. We use this observation later in Corollary~\ref{cor:poincareN} to deduce a new nonlocal Poincar\'{e}-Wirtinger equation. 

In the classical Sobolev setting, 
the norm $\norm{\cdot}_{\widetilde{W}^{1,p}(\Omega)}$ in~\eqref{norms} is equivalent to the quotient norm on $\widetilde{W}^{1,p}(\Omega)$ by the standard Poincar\'e-Wirtinger inequality.
\end{remark}

If the characterization of $\Nzero$ in Theorem~\ref{th:Ncalchar} holds, then any boundary values can be attained in the layer $\Gamma_\d$ by elements in an equivalence class of $\widetilde{H}^{s, p, \delta}(\Omega)$. In other words,  for each $u\in H^{s,p, \d}(\Omega)$ and each $g\in L^p(\Gamma_\d)$, there exists a representative of $[u]_\delta^s = u+\Nzero$ that coincides with $g$ in $\Gamma_\d$.
Based on this observation, we can state the following consequence of Theorem~\ref{th:connecquo}. 
\begin{corollary}
Let $p\in (1, \frac{2}{1-s})$ and $\Omega$ be a bounded $C^{1,1}$-domain. Then, for every $v \in W^{1,p}(\Omega)$ and $g \in L^p(\Gamma_\d)$, there is a $u \in H^{s,p,\d}(\Omega)$ such that
\[
\begin{cases}
D^s_\d u = \nabla v \quad &\text{a.e.~in $\Omega$},\\
u = g \quad &\text{a.e.~in $\Gamma_\d$}.
\end{cases}
\]
\end{corollary}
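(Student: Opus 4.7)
The plan is to combine Theorem~\ref{th:connecquo} with the surjectivity of the boundary-trace map on $\Nzero$ provided by Theorem~\ref{th:Ncalchar} (equivalently, the bijectivity of the isomorphism $\Phi_\d^s$ or $\Psi_\d^s$ in the range $p\in(1,\frac{2}{1-s})$). The rough idea is: first realize the prescribed gradient $\nabla v$ as a nonlocal gradient of some $u_0 \in H^{s,p,\d}(\Omega)$, then adjust $u_0$ by a function with vanishing nonlocal gradient to correct its values in the collar $\Gamma_\d$.

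First, I would use Theorem~\ref{th:connecquo} applied to the equivalence class $[v] \in \widetilde{W}^{1,p}(\Omega)$. Taking any representative $u_0 \in H^{s,p,\d}(\Omega)$ of $\widetilde{\Pcal}_\d^s[v] \in \widetilde{H}^{s,p,\d}(\Omega)$, the second identity in \eqref{identities_quotientspace} guarantees that
\[
D^s_\d u_0 = \widetilde{D}^s_\d\widetilde{\Pcal}^s_\d[v] = \widetilde{\nabla}[v] = \nabla v \quad \text{a.e.~in }\Omega.
\]
In general, however, the restriction $u_0|_{\Gamma_\d}$ need not agree with the prescribed boundary datum $g$.

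Next, I would correct the boundary values by subtracting off an element of $\Nzero$. Since $p \in (1,\frac{2}{1-s})$ and $\Omega$ is $C^{1,1}$, Theorem~\ref{th:Ncalchar} applies and, as noted just after its statement, the map $\Phi_\d^s$ (equivalently $\Psi_\d^s$) is a bijection between $\Nzero$ and $\R \times L^p(\Gamma_\d)$. In particular, the trace map $\Nzero \to L^p(\Gamma_\d)$, $h \mapsto h|_{\Gamma_\d}$, is surjective. Hence there exists $h \in \Nzero$ with $h|_{\Gamma_\d} = g - u_0|_{\Gamma_\d} \in L^p(\Gamma_\d)$.

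Finally, setting $u := u_0 + h \in H^{s,p,\d}(\Omega)$, the linearity of $D^s_\d$ together with $D^s_\d h = 0$ a.e.~in $\Omega$ yields $D^s_\d u = \nabla v$ a.e.~in $\Omega$, while $u|_{\Gamma_\d} = u_0|_{\Gamma_\d} + (g - u_0|_{\Gamma_\d}) = g$ a.e.~in $\Gamma_\d$. There is no serious obstacle in this argument: the work has already been done in Theorem~\ref{th:connecquo} (which supplies the existence of \emph{some} preimage of $\nabla v$ under $D^s_\d$) and in Theorem~\ref{th:Ncalchar} (which supplies the freedom to prescribe arbitrary $L^p$-boundary values in the collar via $\Nzero$). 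The only substantive use of the restriction $p < \frac{2}{1-s}$ is through the surjectivity of the trace map on $\Nzero$; as pointed out in Remark~\ref{rem:plargebcs}, this surjectivity generally fails in the regime $p \geq \frac{2}{1-s}$, which is precisely why the corollary is stated in the present range of $p$.
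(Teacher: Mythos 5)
Your proposal is correct and follows essentially the same route as the paper's proof: obtain some $u_0$ with $D^s_\d u_0=\nabla v$ from Theorem~\ref{th:connecquo}, then use the characterization of $\Nzero$ in Theorem~\ref{th:Ncalchar} to adjust the values in $\Gamma_\d$ by an element of $\Nzero$. The paper compresses the second step into ``we may assume that $u$ coincides with $g$ in $\Gamma_\d$,'' which is exactly the correction $u=u_0+h$ you spell out.
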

\begin{proof}
Let $v\in W^{1,p}(\Omega)$. Then, Theorem~\ref{th:connecquo} implies that $\nabla v= \widetilde{\nabla}[v] = \widetilde{D}_\delta^s[u]_\delta^s = D_\delta^s u$ for some $u\in H^{s,p,\d}(\Omega)$. By Theorem~\ref{th:Ncalchar}, we may assume that $u$ coincides with $g$ in the boundary layer $\Gamma_\d$,  which yields the desired function.
\end{proof}

\subsection{Extension modulo functions with zero nonlocal gradient.}\label{subsec:extension}
While not every function in $H^{s, p,\delta}(\Omega)$ is the restriction of a function in $H^{s, p, \delta}(\R^n)$ (cf.~Proposition~\ref{prop:Nzeroinfinitedim}), 
we can show nevertheless that extensions to the whole space $\R^n$ are possible up to function with  zero nonlocal gradient.
 This technical tool has several applications within this paper. It has appeared already in the proof of Proposition~\ref{prop:Nzeroinfinitedim}, where it provided an efficient way for generating functions in $\Nzero$. 

With $p\in [1, \infty]$, we define for a given bounded linear extension operator $\Ecal:W^{1,p}(\Omega) \to W^{1,p}(\R^n)$, 
\begin{align}\label{extensionmodulo}
\Ecal^s_\d:H^{s,p,\d}(\Omega) \to H^{s,p,\d}(\R^n), \quad \Ecal^s_\d := \Pcal^s_\d \circ \Ecal\circ \Qcal^s_\d,
\end{align} 
with the translation operators $\Qcal_\d^s$ and $\Pcal_\d^s$ from~Section~\ref{subsec:translation}. 
As the composition of bounded linear operators, $\Ecal_\delta^s$ is bounded, even uniformly in $s$ when $p\in (1, \infty)$,  see~\eqref{eq:uniformpbound}. 
In view of \eqref{translation_formula}, we infer for every $u\in H^{s, p,\delta}(\Omega)$ that
 $D^s_\d \Ecal^s_\d u = D^s_\d u$ on $\Omega$, and thus,
\begin{align*}
u-\Ecal^s_\d u|_{\Omega_\d} \in \Nzero \quad \text{for $u\in H^{s, p, \delta}(\Omega)$.}
\end{align*} 
\color{black}In this sense, $\Ecal^s_\d$ can be viewed as an extension operator on $H^{s,p,\d}(\Omega)$ modulo functions in $\Nzero$. 

Note further that $\Ecal_\d^s$, as a map from $H^{s,p,\d}(\Omega)$ to $L^p(\Omega_\d)$, is compact for $p \in (1,\infty)$ due to the compact embedding of $H^{s,p,\d}(\R^n)=H^{s,p}(\R^n)$ into $L^p(\Omega_\d)$, see~Section~\ref{sec:sobolevspaces}. Thus, if $(u_j)_j$ is a weakly convergent sequence in $H^{s,p, \delta}(\Omega)$ with limit $u\in H^{s, p, \d}(\Omega)$, then 
\begin{align}\label{conv:extensionmodulo}
\Ecal^s_\d u_j \to \Ecal^s_\d u \quad \text{ in $L^p(\Omega_\d)$}.
\end{align}

\subsection{A new nonlocal Poincar\'{e} inequality } 
Another application of Theorem~\ref{th:Ncalchar} is that we can derive a new Poincar\'{e} inequality for the nonlocal gradient. As opposed to the Poincar\'{e} inequality in \cite[Theorem~6.1]{BCM23}, which requires functions to be zero in the double collar $\Gamma_{\pm\d}$, the new one only imposes a condition in $\Gamma_\d$ together with an average-value condition. Precisely, will work with functions in the linear subspace
\begin{align*}
 \mathring{H}^{s,p,\d}(\Omega):=\bigl\{u\in H^{s, p, \d}(\Omega): u=0 \text{ a.e.~in $\Gamma_\d$}, \ \textstyle
  \int_\Omega u\, dx=0\bigr\}.
 \end{align*}

\color{black}
\begin{theorem}[Nonlocal Poincar\'{e} inequality]\label{th:poincspecific}
Let $p \in (1,\frac{2}{1-s})$ and $\Omega \subset \R^n$ be a bounded $C^{1,1}$-domain.
Then, there exists a constant $C>0$ such that
\[
\norm{u}_{L^p(\Omega_\d)} \leq C\norm{D^s_\d u}_{L^p(\Omega;\R^n)} 
\]
for all $u \in \mathring{H}^{s,p,\d}(\Omega)$. 
\end{theorem}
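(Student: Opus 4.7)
The plan is to argue by contradiction, combining weak compactness in the reflexive space $H^{s,p,\d}(\Omega)$ with the strong-compactness information carried by the extension operator $\Ecal^s_\d$ of Section~\ref{subsec:extension} and the isomorphism characterization of $\Nzero$ from Theorem~\ref{th:Ncalchar}. I would suppose no such constant exists and pick a sequence $(u_j)_j \subset \mathring{H}^{s,p,\d}(\Omega)$ with $\norm{u_j}_{L^p(\Omega_\d)}=1$ and $\norm{D^s_\d u_j}_{L^p(\Omega;\R^n)} \to 0$. Since $H^{s,p,\d}(\Omega)$ is reflexive (as a closed subspace of $L^p(\Omega_\d)\times L^p(\Omega;\R^n)$), $(u_j)_j$ admits a subsequence with $u_j \weakly u$ in $H^{s,p,\d}(\Omega)$. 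The linear constraints $u|_{\Gamma_\d}=0$ and $\int_\Omega u\,dx=0$ pass to weak $L^p$-limits, while weak continuity of the distributional identity defining $D^s_\d$ forces $D^s_\d u=0$ a.e.~in $\Omega$. Thus $u \in \mathring{H}^{s,p,\d}(\Omega) \cap \Nzero$, and Proposition~\ref{prop:mean0} yields $u=0$.

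The heart of the argument is then to upgrade this to strong convergence in $L^p(\Omega_\d)$, which will directly contradict $\norm{u_j}_{L^p(\Omega_\d)}=1$. For this, I would use the decomposition
\[
u_j = \Ecal^s_\d u_j|_{\Omega_\d} + h_j, \qquad h_j := u_j - \Ecal^s_\d u_j|_{\Omega_\d} \in \Nzero,
\]
provided by \eqref{extensionmodulo}. Since $u_j \weakly 0$ in $H^{s,p,\d}(\Omega)$, statement \eqref{conv:extensionmodulo} yields $\Ecal^s_\d u_j \to 0$ strongly in $L^p(\Omega_\d)$, in particular both $\int_\Omega \Ecal^s_\d u_j\,dx \to 0$ and $(\Ecal^s_\d u_j)|_{\Gamma_\d} \to 0$ in $L^p(\Gamma_\d)$. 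Using the constraints on $u_j$, this gives
\[
\Psi_\d^s(h_j) = \Bigl(\int_\Omega h_j\,dx,\; h_j|_{\Gamma_\d}\Bigr) = \Bigl(-\int_\Omega \Ecal^s_\d u_j\,dx,\; -(\Ecal^s_\d u_j)|_{\Gamma_\d}\Bigr) \;\longrightarrow\; (0,0)
\]
in $\R \times L^p(\Gamma_\d)$. Because $p \in (1,\tfrac{2}{1-s})$, Theorem~\ref{th:Ncalchar} together with the discussion around \eqref{homPsi} identifies $\Psi_\d^s$ as a Banach space isomorphism between $\Nzero$ and $\R \times L^p(\Gamma_\d)$; the open mapping theorem then makes $(\Psi_\d^s)^{-1}$ bounded, so $h_j \to 0$ strongly in $L^p(\Omega_\d)$. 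Adding the two convergences closes the contradiction.

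The main obstacle is precisely this strong-convergence upgrade, since $H^{s,p,\d}(\Omega)$ does not embed compactly into $L^p(\Omega_\d)$: by Proposition~\ref{prop:Nzeroinfinitedim}, $\Nzero$ is an infinite-dimensional closed subspace on which $\norm{\cdot}_{L^p(\Omega_\d)}$ is not controlled by $\norm{D^s_\d \cdot}_{L^p(\Omega;\R^n)}$. All the missing compactness is concentrated in $\Nzero$, and the tools developed in Sections~\ref{sec:characterization_nonlocalgradfree}--\ref{sec:tools} are tailored to remedy this: $\Ecal^s_\d$ transports information into the compactly embedded Bessel potential space $H^{s,p}(\R^n)$, while $\Psi_\d^s$ reduces the $\Nzero$-component to the two scalar/vector features ($\int_\Omega \cdot\,dx$ and the trace on $\Gamma_\d$) that are pinned to zero in the definition of $\mathring{H}^{s,p,\d}(\Omega)$. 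The restriction $p \in (1,\tfrac{2}{1-s})$ is essential here, as it is exactly the regime where $\Psi_\d^s$ is surjective according to Lemma~\ref{le:solpdo2}.
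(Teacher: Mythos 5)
Your proof is correct and follows essentially the same route as the paper: a normalization--contradiction argument, weak compactness plus Proposition~\ref{prop:mean0} to identify the weak limit as zero, and then the strong-convergence upgrade via the decomposition $u_j=\Ecal^s_\d u_j+h_j$ with $h_j\in\Nzero$ controlled through the isomorphism of $\Nzero$ with $\R\times L^p(\Gamma_\d)$ (the paper packages this last step as Lemma~\ref{le:comp}, using $\Phi^s_\d$ rather than your $\Psi^s_\d$, but Remark~\ref{rem:normonN} makes these interchangeable).
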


\begin{proof} 
 The proof stragegy follows a well-known contradiction argument, with Lemma~\ref{le:comp} below as  main technical ingredient. Suppose  the statement is false, then there is a sequence $(u_j)_j \subset \mathring{H}^{s,p,\d}(\Omega)$ 
with $\norm{u_j}_{L^p(\Omega_\d)} > j \norm{D^s_\d u_j}_{L^p(\Omega;\R^n)}$ for all $j$. By defining the sequence $(\tilde{u}_j)_j\subset \mathring{H}^{s,p, \d}(\Omega)$ via
\[
\tilde{u}_j:=\frac{u_j}{\norm{u_j}_{L^p(\Omega_\d)}},
\]
we obtain $\norm{\tilde{u}_j}_{L^p(\Omega_\d)}=1$ and $\norm{D^s_\d \tilde{u}_j}_{L^p(\Omega;\R^n)} \leq 1/j$ for each $j$. 
This allows us to conclude for a non-relabeled subsequence that
\begin{align*}
\tilde{u}_j \weakto h \quad \text{in $H^{s,p,\d}(\Omega)$ as $j\to \infty$,}
\end{align*} 
with a limit function $h\in H^{s,p,\d}(\Omega)$ that satisfies $D^s_\d h=0$, or in other words, $h\in \Nzero$.  Due to the weak closedness of $\mathring{H}^{s,p,\d}(\Omega)$, we also find that $u \in \mathring{H}^{s,p,\d}(\Omega)$, which yields $h=0$ by Proposition~\ref{prop:mean0}.

Finally, we infer from Lemma~\ref{le:comp} that $\tilde{u}_j \to 0$ in $L^p(\Omega_\d)$ as $j\to \infty$, which contradicts $\norm{\tilde{u}_j}_{L^p(\Omega_\d)}=1$ for all $j$ and thereby, proves the result.
\end{proof}

\color{black}
The previous proof used 
 the compact embedding of $\mathring{H}^{s, p, \d}(\Omega)$ into $L^p(\Omega_\d)$, which is the subject of the following lemma. We point out that it builds substantially on the identification of $\Nzero$ from Theorem~\ref{th:Ncalchar}. 
\begin{lemma}\label{le:comp}
Let $p \in (1,\frac{2}{1-s})$ and suppose $\Omega \subset \R^n$ is a bounded $C^{1,1}$-domain. If $(u_j)_j\subset \mathring{H}^{s,p,\d}(\Omega)$ is such that $u_j\weakly u$ in $H^{s, p, \d}(\Omega)$ as $j\to \infty$ with some $u\in H^{s,p, \delta}(\Omega)$, then $u\in \mathring{H}^{s,p, \delta}(\Omega)$ and
\begin{align}\label{convujstrong}
u_j\to u \quad \text{ in $L^p(\Omega_\d)$ as $j\to \infty$.}
\end{align}
\end{lemma}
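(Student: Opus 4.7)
The plan is to use the extension-up-to-$\Nzero$ operator $\Ecal^s_\d$ from~\eqref{extensionmodulo} to split $u_j$ into a part that converges strongly by the compact embedding for the full-space Bessel potential spaces, and a remainder in $\Nzero$ whose convergence can be controlled via the characterization in Theorem~\ref{th:Ncalchar}. The assertion $u\in\mathring{H}^{s,p,\d}(\Omega)$ is immediate: both the boundary condition $u=0$ a.e.~in $\Gamma_\d$ and the mean-value condition $\int_\Omega u\,dx=0$ are defined by bounded linear functionals on $L^p(\Omega_\d)$, hence are weakly closed, so they pass to the limit.

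For the strong convergence, I would write
\[
u_j = \Ecal^s_\d u_j|_{\Omega_\d} + h_j, \qquad h_j := u_j - \Ecal^s_\d u_j|_{\Omega_\d},
\]
noting that $h_j\in\Nzero$ because $D^s_\d(\Ecal^s_\d u_j)=D^s_\d u_j=0$ on $\Omega\setminus\dots$, more precisely because $\Ecal^s_\d u_j$ agrees with $u_j$ modulo functions of zero nonlocal gradient by construction. By~\eqref{conv:extensionmodulo}, $\Ecal^s_\d u_j\to\Ecal^s_\d u$ in $L^p(\Omega_\d)$, so $h_j\weakly h:=u-\Ecal^s_\d u|_{\Omega_\d}$ in $L^p(\Omega_\d)$ with $h\in\Nzero$ (using weak closedness of $\Nzero$). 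The task thus reduces to upgrading this weak convergence of $(h_j)_j\subset\Nzero$ to a strong one.

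At this stage I would invoke the map $\Psi^s_\d:\Nzero\to\R\times L^p(\Gamma_\d)$ from~\eqref{homPsi}, which by Theorem~\ref{th:Ncalchar} together with the Banach isomorphism theorem is a topological isomorphism for $p\in(1,\tfrac{2}{1-s})$. It then suffices to show that the images $\Psi^s_\d h_j$ converge strongly. The mean value converges, $\int_\Omega h_j\,dx\to\int_\Omega h\,dx$, by testing the weak convergence $h_j\weakto h$ against $\mathbbm{1}_\Omega\in L^{p'}(\Omega_\d)$. For the trace in the collar, $u_j\in\mathring{H}^{s,p,\d}(\Omega)$ gives $h_j|_{\Gamma_\d}=-\Ecal^s_\d u_j|_{\Gamma_\d}\to-\Ecal^s_\d u|_{\Gamma_\d}=h|_{\Gamma_\d}$ strongly in $L^p(\Gamma_\d)$ by~\eqref{conv:extensionmodulo}. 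Applying $(\Psi^s_\d)^{-1}$ yields $h_j\to h$ in $L^p(\Omega_\d)$, and combining with the strong convergence of $\Ecal^s_\d u_j|_{\Omega_\d}$ completes the proof.

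The main obstacle, and the reason the argument is confined to $p\in(1,\tfrac{2}{1-s})$, is the need for $\Psi^s_\d$ to be a topological isomorphism, which in turn hinges on the surjectivity furnished by Theorem~\ref{th:Ncalchar}; for larger $p$ only injectivity survives (cf.~Remark~\ref{rem:plargebcs}), and the present reduction to convergence of mean value and boundary trace is no longer sufficient to conclude strong $L^p$-convergence of the $\Nzero$-component.
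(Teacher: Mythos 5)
Your proof is correct and follows essentially the same route as the paper: decompose $u_j$ via the extension operator $\Ecal^s_\d$, note the remainder lies in $\Nzero$, and upgrade its weak convergence to strong convergence using the fact that on $\Nzero$ the $L^p(\Omega_\d)$-norm is equivalent to the norm built from the collar trace and an averaging functional (Remark~\ref{rem:normonN}). The only cosmetic difference is that you use the isomorphism $\Psi^s_\d$ with the plain mean value $\int_\Omega h\,dx$, whereas the paper works with the equivalent norm involving $\int_\Omega Q^s_\d\ast h\,dx$; both are covered by Remark~\ref{rem:normonN} and the argument is otherwise identical.
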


\begin{proof}
The fact that $u\in \mathring{H}^{s,p, \delta}(\Omega)$ is clear, since $\mathring{H}^{s,p, \delta}(\Omega)$ is weakly closed in $L^p(\Omega_\d)$.
As for~\eqref{convujstrong}, we use the extension operator $\Ecal_\d^s$ from Section~\ref{subsec:extension} to obtain  \begin{align*}
\Ecal^s_\d u_j \to \Ecal^s_\d u \quad \text{ in $L^p(\Omega_\d)$ as $j\to \infty$}  
\end{align*}
 by~\eqref{conv:extensionmodulo}.  
Therefore, with the sequence $(h_j)_j\subset \Nzero$ given by $h_j := u_j-\Ecal^s_\d u_j$ and $h:=u-\Ecal^s_\d u$ it holds that
\begin{align}\label{conv_hj}
h_j \weakto h \ \text{in $L^p(\Omega_\d)$}\quad \text{ and } \quad h_j \to h  \ \text{ in $L^p(\Gamma_\d)$ as $j\to \infty$, }
\end{align}
where the second convergence follows from $u_j=0=u$ a.e.~on $\Gamma_\d$. If we consider the norm on $\Nzero$ from Remark~\ref{rem:normonN}, then~\eqref{conv_hj} implies
\begin{align*}
\vertiii{h_j}_{\Nzero} = \norm{h_j}_{L^p(\Omega_j)} + \abslr{\int_{\Omega}Q_\delta^s \ast h_j}  \to 0 \quad \text{as $j\to \infty$.}
\end{align*}
Since $\vertiii{\cdot}_{\Nzero}$ is equivalent to the norm induced on $\Nzero$ by $\norm{\cdot}_{L^p(\Omega_\d)}$, we obtain $h_j \to h$ in $L^p(\Omega_\d)$, and thus, 
\[
u_j = \Ecal^s_\d u_j + h_j \to \Ecal^s_\d u +h = u \quad \text{in $L^p(\Omega_\d)$ as $j\to \infty$,}
\]
which concludes the proof.
\end{proof}

\begin{remark}
The contradiction argument in Theorem~\ref{th:poincspecific} works more generally for any weakly closed subset $X \subset H^{s,p,\d}(\Omega)$ that is compactly contained in $L^p(\Omega_\d)$ and satisfies $X\cap \Nzero=\{0\}$. For example, one could replace the condition $\int_{\Omega} u\,dx=0$ by the condition $\int_{\Omega}Q^s_\d *u\,dx=0$ or remove the mean-value condition completely and assume $u=0$ a.e.~in $\Omega_\d \setminus \overline{O}$ for any $O \Subset \Omega$ (cf.~Remark~\ref{rem:enlargedsinglelayer}).
\end{remark}

\subsection{Nonlocal Poincar\'{e}-Wirtinger inequality}\label{sec:poincwirt}
Here, we derive an inequality involving the nonlocal gradient in the spirit of the classical Poincar\'{e}-Wirtinger inequality, by subtracting suitable functions with zero nonlocal gradient. Moreover, we complement the inequality with a compactness result. This will be used later in Section~\ref{sec:neumann} to prove the well-posedness and localization as $s \uparrow 1$ of nonlocal variational problems with Neumann-type boundary conditions.

Let $p \in (1,\infty)$, and consider the metric projection $\pro:L^p(\Omega_\d) \to \Nzero$, 
which minimizes the distance to the functions with vanishing nonlocal gradient in the $L^p$-norm, i.e., for $u\in L^p(\Omega_\delta)$,
\[
\norm{u-\pro(u)}_{L^p(\Omega_{\d})} = \min_{h \in \Nzero} \norm{u-h}_{L^p(\Omega_\d);}
\]
Note that the minimum exists, considering that $\Nzero$ is weakly closed in $\Hspd(\Omega)$, and also in $L^p(\Omega)$, since $\norm{\cdot}_{H^{s, p, \d}(\Omega)}=\norm{\cdot}_{L^p(\Omega)} $ on $\Nzero$. In the case $p=2$, $\pro$ corresponds to the (linear) orthogonal projection onto $\Nzero$. Even though $\pro$ need not be linear when $p \not =2$, one does have that $\pro$ is $1$-homogeneous and that
\begin{align}\label{aux:pides}
\pro(u+h)=\pro(u)+h \qquad \text{for all $h \in \Nzero$.}
\end{align} 
It is also well-known that $\pro$ is continuous, given that $L^p(\Omega_\d)$ is uniformly convex, see~e.g.,~\cite{GoR84}.  

We now formulate and prove the Poincar\'{e}-Wirtinger inequality with the help of the metric projection.
\begin{lemma}[Nonlocal Poincar\'e-Wirtinger inequality]
\label{cor:poincareN}
Let $p \in (1,\infty)$. Then, there exists a constant $C=C(\Omega,p,\d)>0$ such that 
\[
\norm{u-\pro(u)}_{L^p(\Omega_{\d})} \leq C\norm{D^s_\d u}_{L^p(\Omega;\R^n)}
\]
for all $u \in H^{s,p,\d}(\Omega)$.
\end{lemma}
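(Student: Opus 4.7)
The plan is to derive the inequality as a direct consequence of Theorem~\ref{th:connecquo} and the estimate for $\widetilde{\Pcal}^s_\d$ worked out in Remark~\ref{rem:quotientnorm}. The whole point is that the metric projection $\pi^s_\d(u)$ is, by definition, the element of $\Nzero$ realizing the infimum $\inf_{h\in \Nzero}\norm{u-h}_{L^p(\Omega_\d)}$, which is precisely the first summand appearing in the quotient norm $\vertiii{\,\cdot\,}_{\widetilde{H}^{s,p,\d}(\Omega)}$ on $\Hquo$.

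First, given $u\in H^{s,p,\d}(\Omega)$, I would set $v := \Qcal^s_\d u \in W^{1,p}(\Omega)$ and, without loss of generality, shift the representative so that $\int_\Omega v\,dx=0$; this does not change the equivalence class $[v]\in \Wquo$. By Theorem~\ref{th:connecquo}, we have $[u]_\d^s = \widetilde{\Pcal}^s_\d[v]$ and $\norm{\nabla v}_{L^p(\Omega;\R^n)} = \norm{D^s_\d u}_{L^p(\Omega;\R^n)}$.

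Next, using that both summands in $\vertiii{\,\cdot\,}_{\widetilde{H}^{s,p,\d}(\Omega)}$ are non-negative, I would estimate
\begin{equation*}
\norm{u-\pro(u)}_{L^p(\Omega_\d)} = \inf_{h\in \Nzero}\norm{u-h}_{L^p(\Omega_\d)} \leq \vertiii{[u]_\d^s}_{\widetilde{H}^{s,p,\d}(\Omega)} = \vertiii{\widetilde{\Pcal}^s_\d[v]}_{\widetilde{H}^{s,p,\d}(\Omega)}.
\end{equation*}
The chain of bounds carried out in Remark~\ref{rem:quotientnorm}—invoking the uniform $L^p$-bound for $\Pcal^s_\d$ from \eqref{eq:uniformpbound}, the boundedness of an extension operator $\Ecal:W^{1,p}(\Omega)\to W^{1,p}(\R^n)$, and finally the classical Poincar\'e--Wirtinger inequality on $\Omega$ (which uses $\int_\Omega v\,dx=0$)—then yields
\begin{equation*}
\vertiii{\widetilde{\Pcal}^s_\d[v]}_{\widetilde{H}^{s,p,\d}(\Omega)} \leq C\norm{[v]}_{\widetilde{W}^{1,p}(\Omega)} = C\norm{\nabla v}_{L^p(\Omega;\R^n)} = C\norm{D^s_\d u}_{L^p(\Omega;\R^n)},
\end{equation*}
which, combined with the previous estimate, gives the claim.

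There is essentially no obstacle here beyond bookkeeping, since all the work is packaged in Theorem~\ref{th:connecquo} and Remark~\ref{rem:quotientnorm}; the only conceptual point to keep track of is that the metric projection $\pro$ realizes precisely the $L^p$-part of the quotient norm on $\Hquo$, so that one can bypass the (possibly nonlinear) projection and control it by the norm of any convenient representative obtained via $\widetilde{\Pcal}^s_\d$.
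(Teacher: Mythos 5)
Your proof is correct and follows essentially the same route as the paper: bounding $\norm{u-\pro(u)}_{L^p(\Omega_\d)}$ by the quotient norm $\vertiii{[u]_\d^s}_{\widetilde{H}^{s,p,\d}(\Omega)}$, writing $[u]_\d^s=\widetilde{\Pcal}^s_\d\widetilde{\Qcal}^s_\d[u]_\d^s$, and invoking the estimate of Remark~\ref{rem:quotientnorm} together with the identity $\nabla(\Qcal_\d^s u)=D^s_\d u$. The normalization $\int_\Omega v\,dx=0$ is exactly the right bookkeeping to apply the bound from that remark.
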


\begin{proof}
It follows from Theorem~\ref{th:connecquo} (in the version where $\widetilde{H}^{s,p,\delta}(\Omega)$ is equipped with the quotient norm, see Remark~\ref{rem:quotientnorm}) and Lemma~\ref{le:translation1} that 
\begin{align*}
\norm{u-\pro(u)}_{L^p(\Omega_\d)} &\leq \vertiii{[u]_\delta^s}_{\widetilde{H}^{s,p,\d}(\Omega)} =\vertiii{\widetilde{\Pcal}^s_\d\widetilde{\Qcal}^s_\d[u]^s_\d}_{\widetilde{H}^{s,p,\d}(\Omega)}\leq C\norm{\widetilde{\Qcal}_\d^s[u]_\delta^s}_{\widetilde{W}^{1,p}(\Omega)}  \\ & = C\norm{[\Qcal_\delta^s u]}_{\widetilde{W}^{1,p}(\Omega)}  = C\norm{\nabla (\Qcal_\delta^s u)}_{L^p(\Omega;\R^n)}=C\norm{D^s_\d u}_{L^p(\Omega;\R^n)},
\end{align*}
with a constant $C>0$ independent of $s$.  
\end{proof}
Second, one obtains the following compactness result. It can be seen as the trace-free analogue to~\cite[Theorem~2.3]{BCM20} in the setting of complementary-value spaces.
\begin{lemma}[Compactness in $H^{s,p, \d}(\Omega)$]\label{le:rellichprojection}
 Let $p \in (1,\infty)$, then any sequence $(u_j)_j\subset H^{s, p, \d}(\Omega)$ converging weakly to $u$ in $H^{s, p, \d}(\Omega)$ satisfies
\begin{center}
$u_j-\pro(u_j) \to u-\pro(u)$ \quad in $L^p(\Omega_{\d})$ as $j\to \infty$. 
\end{center}
\end{lemma}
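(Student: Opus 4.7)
The plan is to reduce the problem to the already-established strong $L^p$-convergence of the extension operator $\Ecal^s_\d$ from Section~\ref{subsec:extension}, and then pass to the metric projection via its continuity together with the translation identity \eqref{aux:pides}. The key insight is that the ``nonlocal gradient-free'' part of $u_j$, i.e.~$h_j := u_j-\Ecal^s_\d u_j|_{\Omega_\d}$, lies in $\Nzero$, so it is invisible to $\proj_{\Nzero}^{\,} = \pro$ in the sense of an additive shift. This should let the $h_j$ cancel out completely from the difference $u_j-\pro(u_j)$, reducing everything to a sequence whose strong convergence in $L^p(\Omega_\d)$ is already known.

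First I would set $h_j := u_j-\Ecal^s_\d u_j|_{\Omega_\d}$ and $h := u-\Ecal^s_\d u|_{\Omega_\d}$, noting that $h_j,h\in\Nzero$ by the discussion following \eqref{extensionmodulo}. Applying the shift property \eqref{aux:pides} then yields
\[
\pro(u_j) = \pro\bigl(\Ecal^s_\d u_j|_{\Omega_\d}+h_j\bigr) = \pro\bigl(\Ecal^s_\d u_j|_{\Omega_\d}\bigr)+h_j,
\]
and the analogous identity for $u$. Subtracting, the $h_j$ and $h$ cancel, so that
\[
u_j-\pro(u_j) = \Ecal^s_\d u_j|_{\Omega_\d}-\pro\bigl(\Ecal^s_\d u_j|_{\Omega_\d}\bigr),\qquad u-\pro(u) = \Ecal^s_\d u|_{\Omega_\d}-\pro\bigl(\Ecal^s_\d u|_{\Omega_\d}\bigr).
\]

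Next, I would invoke \eqref{conv:extensionmodulo}, which, applied to the weakly convergent sequence $(u_j)_j$, gives $\Ecal^s_\d u_j \to \Ecal^s_\d u$ strongly in $L^p(\Omega_\d)$. Since the metric projection $\pro$ onto the closed subspace $\Nzero$ of the uniformly convex space $L^p(\Omega_\d)$ is continuous, this also yields $\pro(\Ecal^s_\d u_j|_{\Omega_\d})\to \pro(\Ecal^s_\d u|_{\Omega_\d})$ in $L^p(\Omega_\d)$. Combining the two convergences with the identities above produces the desired conclusion $u_j-\pro(u_j)\to u-\pro(u)$ in $L^p(\Omega_\d)$.

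I do not expect any serious obstacle here, since all the hard analytical inputs are in place: the compact embedding of $\Ecal^s_\d$ into $L^p(\Omega_\d)$ (which ultimately rests on the Bessel potential compactness discussed in Section~\ref{sec:sobolevspaces} and justifies \eqref{conv:extensionmodulo}), the shift property \eqref{aux:pides} of $\pro$, and the continuity of the metric projection onto a closed subspace of a uniformly convex Banach space. The only mildly delicate point is to verify the shift property in the nonlinear ($p\neq 2$) case, which follows from the translation invariance of the norm distance to $\Nzero$; I would justify this in one line by noting that $h\mapsto \norm{u-h}_{L^p(\Omega_\d)}$ and $h'\mapsto\norm{(u+h_0)-h'}_{L^p(\Omega_\d)}$ are minimized at corresponding points differing by $h_0$ whenever $h_0\in\Nzero$.
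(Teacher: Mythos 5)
Your proof is correct and uses essentially the same ingredients as the paper's: the extension operator $\Ecal^s_\d$ with the strong convergence \eqref{conv:extensionmodulo}, the shift property \eqref{aux:pides}, and the continuity of the metric projection on the uniformly convex space $L^p(\Omega_\d)$. The only difference is cosmetic — you cancel the $\Nzero$-parts $h_j$ directly to reduce to the sequence $\Ecal^s_\d u_j|_{\Omega_\d}$ and then apply continuity of $\pro$, whereas the paper first shows $u_j-\pro(u_j-u)\to u$ and then applies \eqref{aux:pides}; your rearrangement is, if anything, slightly more streamlined.
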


\begin{proof}
Using the extension operator modulo $\Nzero$ from~\eqref{extensionmodulo}, we define 
\begin{center}
$h_j:=\Ecal^s_\d u_j-u_j+\Ecal^s_\d u-u \in \Nzero$\quad for all $j$. 
\end{center}
Since $\Ecal^s_\d u_j \to \Ecal^s_\d u$ in $L^p(\Omega_\d)$ according to~\eqref{conv:extensionmodulo}, it follows that $u_j+h_j \to u$ in $L^p(\Omega_\d)$, and hence,
\[
\lim_{j \to \infty}\norm{u_j-u-\pro(u_j-u)}_{L^p(\Omega_\d)} \leq \lim_{j \to \infty} \norm{u_j-u+h_j}_{L^p(\Omega_\d)}=0,
\]
by definition of the metric projection. 
This shows that
\begin{center}
 $u_j-\pro(u_j-u)\to u$ in $L^p(\Omega_\delta)$ as $j\to \infty$.
 \end{center} In view of~\eqref{aux:pides} and the continuity of $\pro$, we then find that
\[
u_j -\pro(u_j)  
= u_j -\pro(u_j-u)  -\pro(u_j -\pro(u_j-u)) \to u- \pro(u) \quad\text{in $L^p(\Omega_\d)$},
\]
which concludes the proof.
\end{proof}

\section{Nonlocal differential inclusion problems} \label{sec:diffinclusions}

In the present section we discuss results on the solvability of differential inclusion problems involving the nonlocal gradient. This means that for a given set $E\subset \R^{m\times n}$, we aim to find all $u\in H^{s, p, \delta}(\Omega;\R^m)$ that satisfy
\begin{align}\label{nonlocinclusion_motivation}
D^s_\d u  \in E \quad \text{a.e.~ in $\Omega$},
\end{align}
and optionally, also a boundary condition in the single layer $\Gamma_\d$ or the double layer $\Gamma_{\pm\d}$. Problems of the type \eqref{nonlocinclusion_motivation} have not appeared in the literature before, although related results such as fractional Korn inequalities have been studied recently in various settings \cite{BCM23Eringen, ScM19, HMM23}.

Throughout this section, let $p\in [1, \infty]$ and $\Omega \subset \R^n$ be a bounded $C^{1,1}$-domain. Additionally, whenever we work with Dirichlet conditions in the double layer $\Gamma_{\pm \d}$, we also assume that $\Omega_{-\d} \not= \emptyset$ and $\abs{\partial \Omega_{-\d}}=0$. The set $\Nzero$ naturally plays a key role in the discussion of \eqref{nonlocinclusion_motivation}, considering that it can be interpreted as the solution to the most basic nonlocal inclusion, namely,  with the choice $E=\{0\}$. On the one hand, for any solution to~\eqref{nonlocinclusion_motivation}, adding a function from $\Nzero$ generates a new solution, that is, if $u\in \Hspd(\Omega)$ solves~\eqref{nonlocinclusion_motivation}, then so does any other element in $[u]_\delta^s = u+\Nzero$, cf.~Section~\ref{sec:app1}. When $p \in (1,\frac{2}{1-s})$, any single-layer boundary condition $g \in L^p(\Gamma_\d)$ can therefore be attained, by the characterization of $\Nzero$ in Theorem~\ref{th:Ncalchar}.

Our overall strategy in dealing with~\eqref{nonlocinclusion_motivation} is to relate them with classical differential inclusions, and to carry over the by now well-known results on their classical counterparts, that is, solving 
\begin{equation}\label{eq:classicalinclusion}
\nabla v \in E \quad \text{a.e. in $\Omega$.}
\end{equation}
for $v\in W^{1,p}(\Omega)$, also subject to boundary conditions. A rich literature on the latter has emerged over the last decades, including~\cite{DaM99, DaM99b, DaP05, MuS01, Syc11}, see also~\cite{Dacorogna, Mul99, Rin18} for an overview.
While there is no unified theory available, the results fall roughly into two groups, relating to the complementary themes of rigidity and flexibilty. This division, which we will adopt here as well, is partly motivated by models in materials science, 
where differential inclusions appear naturally when studying microstructure formation, cf.~\cite{Mul99, Rin18}.

The connection between nonlocal and standard gradients established in Section~\ref{sec:app1} implies that~\eqref{nonlocinclusion_motivation} and \eqref{eq:classicalinclusion} are equivalent when it comes to solvability. 
Indeed, due to Theorem~\ref{th:connecquo} the map $\widetilde{\Qcal}_\d^s$ gives a bijection between the solutions of \eqref{eq:classicalinclusion} modulo constants and the solutions to \eqref{nonlocinclusion_motivation} modulo functions in $\Nzero$. In the following, we take a look into selected aspects of flexibility and rigidity in the nonlocal setting, starting with the latter.

 One calls the classical differential inclusion~\eqref{eq:classicalinclusion} rigid, if all its solutions $v$ have constant gradient, meaning that, $v(x)=l_A(x) +c=Ax+c$ for $A \in E$ and $c \in \R^m$; recall the notation $l_A$ with $A\in \Rmn$ for the linear function $l_A(x)=Ax$ with $x\in \R^n$.
The nonlocal gradient of a linear function agrees with the classical gradient, since 
\begin{align}\label{gradaffine}
D^s_\d l_A= Q^s_\d * \nabla l_A = Q^s_\d * A = A,
\end{align}
where we have used $\norm{Q^s_\d}_{L^1(\R^n)}=1$ (see also~\cite[Proposition~4.1]{BCM23Eringen}). 
Based on this observation, one obtains that rigidity carries over to the nonlocal setting in the following sense.  
\color{red} 

\color{black}
\begin{corollary}[Nonlocal rigidity]\label{cor:rigidsingle}
Let 
$E \subset \Rmn$ be such that the differential inclusion \eqref{eq:classicalinclusion} is rigid. Then, all solutions $u \in H^{s,p,\d}(\Omega)$ to the nonlocal inclusion
\begin{align}\label{eq:nonlocalinclusion}
D^s_\d u \in E \quad \text{a.e.~in $\Omega$},
\end{align}
are of the form $u=l_A+h$ with $A\in E$ and $h \in \Nzero$.
In particular, if $p \in (1,\frac{2}{1-s})$, then for any $g \in L^p(\Gamma_\d)$, there is a solution $u$ of \eqref{eq:nonlocalinclusion} with $u=g$ a.e.~in $\Gamma_\d$.
\end{corollary}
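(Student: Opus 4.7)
The plan is to transport the nonlocal inclusion to a classical one via the translation operator $\Qcal_\d^s$, apply the rigidity hypothesis there, and then reconstruct the full solution set by adding elements of $\Nzero$.

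First I would take any $u\in H^{s,p,\d}(\Omega;\R^m)$ with $D^s_\d u\in E$ a.e.~in $\Omega$. By Lemma~\ref{le:translation1} applied componentwise, the function $v:=\Qcal_\d^s u$ lies in $W^{1,p}(\Omega;\R^m)$ and satisfies $\nabla v = D^s_\d u\in E$ a.e.~in $\Omega$. The assumed rigidity of \eqref{eq:classicalinclusion} then forces $\nabla v$ to be constant, so there exists $A\in E$ with $\nabla v \equiv A$ a.e., and consequently $D^s_\d u=A$ a.e.~in $\Omega$.

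Next I would compare $u$ with $l_A$. Since $\Omega_\d$ is bounded, $l_A$ belongs to $L^p(\Omega_\d;\R^m)$, and by \eqref{gradaffine} we have $D^s_\d l_A = A$. Hence $h:=u-l_A$ lies in $H^{s,p,\d}(\Omega;\R^m)$ with $D^s_\d h = D^s_\d u - D^s_\d l_A = 0$ a.e.~in $\Omega$, so $h\in \Nzero$ (understood componentwise), which gives the representation $u=l_A+h$ as claimed.

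For the second assertion, fix $g\in L^p(\Gamma_\d;\R^m)$ and pick any $A\in E$. When $p\in(1,\tfrac{2}{1-s})$, the map $\Psi_\d^s$ from \eqref{homPsi} is a bijection between $\Nzero$ and $\R\times L^p(\Gamma_\d)$ (applied componentwise in the vector-valued setting), so there exists $h\in \Nzero$ whose restriction to $\Gamma_\d$ equals $g-l_A|_{\Gamma_\d}\in L^p(\Gamma_\d;\R^m)$. Then $u:=l_A+h\in H^{s,p,\d}(\Omega;\R^m)$ satisfies $D^s_\d u= A\in E$ a.e.~in $\Omega$ and $u=g$ a.e.~in $\Gamma_\d$, which is the desired solution. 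There is no real obstacle here: once the rigidity is transported via Lemma~\ref{le:translation1} and Theorem~\ref{th:Ncalchar} is invoked for the prescribed boundary values, the statement follows essentially bookkeeping-style from the tools already developed.
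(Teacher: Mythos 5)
Your proposal is correct and follows essentially the same route as the paper: transport the inclusion to the classical one via $\Qcal_\d^s$ (Lemma~\ref{le:translation1}), invoke rigidity to get $D^s_\d u = A$ with $A\in E$, subtract $l_A$ using \eqref{gradaffine} to land in $\Nzero$, and then realize arbitrary boundary values in $\Gamma_\d$ via the characterization of $\Nzero$ for $p\in(1,\tfrac{2}{1-s})$. The only cosmetic difference is that the paper phrases the first part through the injectivity of the quotient-space isomorphism $\widetilde{\Qcal}_\d^s$ from Theorem~\ref{th:connecquo}, whereas you compute directly; the substance is identical.
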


\begin{proof}
As $u\in [l_A]_\delta^s$ with $A\in E$ clearly solves~\eqref{eq:nonlocalinclusion} in view of~\eqref{gradaffine}, it remains to show that these are the only solutions. 
Indeed, by the assumption of rigidity, the solutions to \eqref{eq:classicalinclusion} are exactly the functions that lie in $[l_A]$ for some $A \in E$, so that any $u$ solving~\eqref{eq:nonlocalinclusion} needs to satisfy $\widetilde{\Qcal}_\delta^s[u]_\delta^s= [l_A]$.  Since also $\widetilde{\Qcal}_\delta^s[l_A]_\delta^s= [l_A]$ and $\widetilde{\Qcal}_\d^s$ is injective according to Theorem~\ref{th:connecquo}, we finally conclude that $u-l_A \in \Nzero$.

\color{black}
When the assumptions of Theorem~\ref{th:Ncalchar} are satisfied, 
we may use Theorem~\ref{th:Ncalchar} to find that any boundary condition is attained in $\Nzero$, which yields the second part of the statement.
\end{proof}

The preceding result characterizes all solutions in terms of the set $\Nzero$ and shows that there is no restriction on the boundary conditions that can be achieved in the single layer. If one prescribes boundary conditions in the double layer $\Gamma_{\pm \d}$, instead, the set of solutions is considerably more restrictive. 
Our next statement addresses a nonlocal inclusion problem with linear boundary data $l_A$ with $A\in \Rmn$, precisely, 
\begin{align}\label{nonlocinclusion_with}
\begin{cases}
D^s_\d u \in E &\text{a.e.~in $\Omega_{-\d}$},\\
D^s_\d u =A & \text{a.e.~in $\Gamma_{-\d}$},\\
u = l_A &\text{a.e.~in $\Gamma_{\pm\d}$},
\end{cases}
\end{align}
for $u\in \Hspd(\Omega)$. Note that the reason for prescribing the nonlocal gradient in the collar $\Gamma_{-\d}$ is that the condition $u=l_A$ a.e.~in $\Gamma_{\pm \d}$ automatically implies $D^s_\d u =A$ near $\partial \Omega$  in light of \ref{itm:h2}.  The inclusion $D^s_\d u \in E$ a.e.~in $\Omega$ would therefore only be possible if $A \in E$, which renders the problem trivial.  We now show a rigidity statement for \eqref{nonlocinclusion_with}. 

\begin{corollary}[Nonlocal rigidity with linear boundary conditions]\label{cor:rigiddouble}
Let  $E \subset \Rmn$ be such that the inclusion \eqref{eq:classicalinclusion} is rigid and let $A\in \Rmn$. Then, the nonlocal inclusion problem~\eqref{nonlocinclusion_with} has a solution if and only if $A \in E$, which is then uniquely given by $u=l_A$.
\end{corollary}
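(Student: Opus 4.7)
The backward direction is immediate: if $A \in E$, then by~\eqref{gradaffine} the function $u = l_A$ has $D^s_\d l_A = A \in E$ on all of $\R^n$, so it satisfies all three conditions trivially.

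For the forward direction, the plan is to work with the shifted function $w := u - l_A$ and its classical counterpart $v := \Qcal^s_\d w \in W^{1,p}(\Omega; \R^m)$. Using Lemma~\ref{le:translation1} together with~\eqref{gradaffine}, the three conditions on $u$ translate to
\[
\nabla v \in E - A \ \text{a.e.~on $\Omega_{-\d}$}, \quad \nabla v = 0 \ \text{a.e.~on $\Gamma_{-\d}$}, \quad w = 0 \ \text{a.e.~on $\Gamma_{\pm\d}$}.
\]
Since rigidity of $E$ is equivalent to rigidity of the shifted set $E - A$, I would apply classical rigidity to $v$ restricted to the subdomain $\Omega_{-\d}$ (on each of its connected components), concluding that $v|_{\Omega_{-\d}} = l_{B_0} + c_0$ for some $B_0 \in E - A$ and $c_0 \in \R^m$. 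On $\Gamma_{-\d}$, the vanishing gradient forces $v$ to be locally constant.

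The key step is to match the traces of $v \in W^{1,p}(\Omega; \R^m)$ across the $(n-1)$-dimensional interface $\partial \Omega_{-\d}$: since $v$ is affine with slope $B_0$ on the $\Omega_{-\d}$-side and locally constant (slope $0$) on the $\Gamma_{-\d}$-side, trace compatibility forces $B_0 = 0$. This shows $0 \in E - A$, i.e., $A \in E$, and by connectedness of $\Omega$ (propagated through the interface), $v$ is in fact globally constant on $\Omega$.

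Translating back via $\nabla v = D^s_\d w = 0$ on $\Omega$ places $w$ in $\Nzero$. Combined with $w = 0$ a.e.~on $\Gamma_{\pm\d} = \Omega_\d \setminus \overline{\Omega_{-\d}}$, Remark~\ref{rem:enlargedsinglelayer} applied with the admissible choice $O = \Omega_{-\d} \Subset \Omega$ finally yields $w = 0$ on $\Omega_\d$, that is, $u = l_A$. The principal technical obstacle will be the application of classical rigidity on the subdomain $\Omega_{-\d}$ together with the careful trace-matching across $\partial \Omega_{-\d}$; in degenerate situations where $\Omega_{-\d}$ fails to be connected (say for large $\d$), the argument must be repeated on each component and the matching carried out separately, with a constant slope $0$ on the adjacent piece of $\Gamma_{-\d}$ forcing the affine slope on every component to vanish.
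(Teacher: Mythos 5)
Your proof is correct and follows essentially the same route as the paper's: translate to the classical setting via $\Qcal^s_\d$, apply classical rigidity on $\Omega_{-\d}$, and force the affine slope to vanish by matching Sobolev traces across $\partial\Omega_{-\d}$ (the paper phrases this as the impossibility of $\partial\Omega_{-\d}$ being contained in an affine subspace of dimension at most $n-1$). The only real difference is the endgame: you conclude via Remark~\ref{rem:enlargedsinglelayer} with $O=\Omega_{-\d}$, whereas the paper invokes the double-layer nonlocal Poincar\'e inequality of \cite[Theorem~6.1]{BCM23}; both work, though the remark's machinery is developed for $p\in(1,\infty)$, so at the endpoints $p\in\{1,\infty\}$ the paper's citation is the more directly applicable one.
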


\begin{proof}
Let $u$ be a solution of \eqref{nonlocinclusion_with} and define $v:=\Qcal^s_\d u \in W^{1,p}(\O)$, which then satisfies $\nabla v \in E$ a.e.~in $\Omega_{-\d}$ and $\nabla v =A$ a.e.~in $\Gamma_{-\d}$, cf.~Lemma~\ref{le:translation1}. Since \eqref{eq:classicalinclusion} is rigid, there is an $A' \in E$ such that $\nabla v=A'$ a.e.~in $\Omega_{-\d}$. Hence, it holds that $v=l_{A'}+c$ a.e.~in $\Omega_{-\d}$ for some $c \in \R$. Moreover, for a.e.~$x \in \Omega \setminus (\overline{\Omega_{-\d}} + \supp(Q^s_\d))$  (by~\ref{itm:h2}, this open set is non-empty),  we obtain 
\[
v(x) =(Q^s_\d * u)(x)  = (Q_\delta^s \ast l_A)(x)= Ax.
\]
Combining this with $\nabla v =A$ a.e.~in $\Gamma_{-\d}$, yields that $v=l_A$ a.e.~in $\Gamma_{-\d}$. We conclude that
\[
\begin{cases}
v = l_{A'}+c &\text{a.e.~in $\Omega_{-\d}$},\\
v = l_{A} &\text{a.e.~in $\Gamma_{-\d}$},
\end{cases}
\]
so that we must have $l_{A'}+c=l_{A}$ on $\partial \Omega_{-\d}$ for $v$ to be a Sobolev function. Unless $A=A'$ and $c=0$, we find that the set where $l_{A'}+c=l_{A}$ is an affine subspace of dimension at most $n-1$, which cannot contain the boundary of the bounded open set $\Omega_{-\d}$. Therefore, we must have $A=A'$ and $c=0$, which yields, in particular, that $A \in E$ and $D^s_\d u =\nabla v =A$ a.e.~in $\Omega$. We now infer from the nonlocal Poincar\'{e} inequality for double-layer boundary conditions (see~\cite[Theorem~6.1]{BCM23}) that $u=l_A$ is indeed the only solution.
\end{proof}

Next is a statement on flexibility for~\eqref{nonlocinclusion_with}, which also allows for solutions with non-constant nonlocal gradients and reveals a relation between the attainable boundary conditions and the set $E$. In doing so, we restrict our attention to a weaker notion of solutions, though, calling a sequence  $(u_j)_j \subset H^{s,\infty,\d}(\Omega;\R^m)$ an approximate solution to~\eqref{nonlocinclusion_with}, if
\begin{equation}\label{eq:nonrigidinclusion}
\begin{cases}
\dist(D^s_\d u_j,E) \to 0 & \text{in measure on $\Omega_{-\d}$,}\\
D^s_\d u_j \to A & \text{in measure on $\Gamma_{-\d}$,}\\
u_j = l_A & \text{in $\Gamma_{\pm\d}$}.
\end{cases}
\end{equation}

In the classical case, it is well-known that approximate solutions to~\eqref{eq:classicalinclusion} subject to linear boundary values $l_A$ exist if and only if $A$ lies in the quasiconvex hull of $E$ defined by
\[
E^{\rm qc}:=\bigl\{B \in \Rmn\,:\, f(B) \leq \sup_{E} f \ \text{for all quasiconvex $f:\Rmn \to \R$}\bigr\},
\]
see~e.g.,~\cite[Theorem~4.10]{Mul99},\cite[Chapter~7]{Dacorogna}. For the approximate solutions as in \eqref{eq:nonrigidinclusion}, we can use the translation method to prove an analogous statement. 

\color{black}
\begin{proposition}[Approximate solutions to nonlocal differential inclusions]
Let $E \subset \Rmn$ be compact and $A \in \Rmn$. Then, \eqref{nonlocinclusion_with} admits an approximate solution in the sense of \eqref{eq:nonrigidinclusion} if and only if $A \in E^{\rm qc}$.
\end{proposition}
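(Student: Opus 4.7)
The plan is to establish both implications via the translation mechanism of Theorem~\ref{th:connecquo}, thereby reducing the nonlocal differential inclusion to the classical characterization of $E^{\rm qc}$ through approximate solutions of $\nabla v \in E$.

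\emph{Backward direction.} Let $(u_j)_j$ be an approximate solution with $D^s_\d u_j$ uniformly bounded in $L^\infty(\Omega;\Rmn)$ (as is natural and which will also be the case for the construction in the forward direction). Extending each $u_j$ by $l_A$ to all of $\R^n$, consistently with $u_j = l_A$ on $\Gamma_\d$, I set $v_j := Q^s_\d \ast u_j$. Since $u_j - l_A$ is supported in $\overline{\Omega_{-\d}}$ by the boundary condition in $\Gamma_{\pm \d}$ and $\mathrm{supp}\,Q^s_\d \subset \overline{B_\d(0)}$, the difference $v_j - l_A$ is compactly supported in $\overline\Omega$, while $\nabla v_j = D^s_\d u_j$ on $\R^n$ by Lemma~\ref{le:translation1}. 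Applying Jensen's inequality for any continuous quasiconvex $f:\Rmn \to \R$ on an open set slightly containing $\overline\Omega$ gives
\[
f(A)\,|\Omega| \leq \int_\Omega f(\nabla v_j)\,dx = \int_{\Omega_{-\d}} f(\nabla v_j)\,dx + \int_{\Gamma_{-\d}} f(\nabla v_j)\,dx.
\]
Passing to the limit, the uniform bound combined with the convergences in measure from~\eqref{eq:nonrigidinclusion} yields $\limsup_j \int_{\Omega_{-\d}} f(\nabla v_j)\,dx \leq (\sup_E f)\,|\Omega_{-\d}|$ and $\lim_j \int_{\Gamma_{-\d}} f(\nabla v_j)\,dx = f(A)\,|\Gamma_{-\d}|$, so that $f(A) \leq \sup_E f$. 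As $f$ was arbitrary, $A \in E^{\rm qc}$.

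\emph{Forward direction.} Given $A \in E^{\rm qc}$ with compact $E$, the classical flexibility theory provides a sequence $v_j \in W^{1,\infty}(\R^n;\R^m)$ with $v_j - l_A \in W^{1,\infty}_0(\Omega_{-\d};\R^m)$, $\dist(\nabla v_j, E) \to 0$ in measure on $\Omega_{-\d}$, $\nabla v_j = A$ outside $\overline{\Omega_{-\d}}$, and a uniform $L^\infty$-bound on $\nabla v_j$. Applying $\Pcal^s_\d$ (Lemma~\ref{le:translation2}) to $v_j - l_A \in W^{1,p}(\R^n;\R^m)$ for any finite $p$, I set $\tilde u_j := \Pcal^s_\d(v_j - l_A) + l_A$, which yields $D^s_\d \tilde u_j = \nabla v_j$ on $\R^n$; the first two conditions of~\eqref{eq:nonrigidinclusion} then follow immediately. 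To enforce the exact boundary condition $u_j = l_A$ in $\Gamma_{\pm \d}$, I would subtract a correction $h_j \in \Nzero$, which leaves $D^s_\d$ unchanged, using Theorem~\ref{th:Ncalchar} to match the outer-collar trace $\Pcal^s_\d(v_j - l_A)|_{\Gamma_\d}$.

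\emph{Main obstacle.} The hardest part will be simultaneously enforcing the inner-collar condition: $\Pcal^s_\d$ does not preserve compact support, so $\Pcal^s_\d(v_j - l_A)$ is generically nonzero throughout $\Gamma_{\pm \d}$, while Theorem~\ref{th:Ncalchar} determines $h_j$ completely from its $\Gamma_\d$-trace together with an average, leaving no freedom to prescribe $h_j|_{\Gamma_{-\d}}$. I expect to bypass this by applying the classical flexibility on slightly shrinking interior subdomains $\Omega_{-\d-\eps_j}$ with $\eps_j \downarrow 0$, so that $v_j - l_A$ is compactly supported strictly inside $\Omega_{-\d}$ at distance at least $\eps_j$ from $\partial\Omega_{-\d}$; a subsequent truncation together with a diagonal argument should then absorb the inner-collar discrepancy into a measure-negligible set, while the exact boundary values in $\Gamma_{\pm \d}$ are imposed via the correction $h_j$.
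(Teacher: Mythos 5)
Your backward direction is sound and essentially coincides with the paper's, which sets $v_j:=\Qcal^s_\d u_j$ and invokes ``a small adaptation'' of M\"uller's Theorem~4.10\,$(i)$ --- the adaptation being exactly the Jensen/quasiconvexity computation you carry out after splitting $\Omega$ into $\Omega_{-\d}$ and $\Gamma_{-\d}$. The uniform $L^\infty$-bound you assume on $D^s_\d u_j$ is the same normalization implicit in the classical statement, so that step is fine.

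The forward direction, however, has a genuine gap, sitting exactly where you flag the ``main obstacle'', and your proposed repair does not work. The correction-by-$\Nzero$ route is a dead end: elements of $N^{s,\infty,\d}(\Omega;\R^m)$ are rigidly determined by their outer-collar trace and an average, so there is no freedom left to enforce $u_j=l_A$ on the inner collar $\Gamma_{-\d}$; moreover, Theorem~\ref{th:Ncalchar} only applies for $p\in(1,\tfrac{2}{1-s})$, and surjectivity onto the boundary data already fails for finite $p\geq \tfrac{2}{1-s}$ (Remark~\ref{rem:plargebcs}), so in the $L^\infty$ setting the correction $h_j$ you want need not even exist. Shrinking the support of $v_j-l_A$ to $\Omega_{-\d-\eps_j}$ does not help either, since $\Pcal^s_\d$ destroys compact support no matter how small that support is. The missing idea is the paper's: first renormalize the classical sequence so that $v_j\to 0$ in $L^\infty$ (by gluing scaled, translated copies of the M\"uller construction), deduce from the compact embedding of $H^{s,p}(\R^n)$ into $L^\infty(\Omega_\d)$ for $sp>n$ that $\tilde u_j:=\Pcal^s_\d v_j\to 0$ uniformly on $\Omega_\d$, and then impose the boundary condition by brute force, setting $u_j:=\chi_j\tilde u_j$ with cut-offs $\chi_j\in C_c^\infty(\Omega_{-\d};[0,1])$ whose plateaus exhaust $\Omega_{-\d}$. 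The price is a commutator in the nonlocal Leibniz rule, $D^s_\d(\chi_j\tilde u_j)=\chi_j D^s_\d\tilde u_j+K_{\chi_j}(\tilde u_j)$, and the whole construction hinges on the estimate $\norm{K_{\chi_j}(\tilde u_j)}_{L^\infty}\leq C\,\Lip(\chi_j)\norm{\tilde u_j}_{L^\infty(\Omega_\d)}$, which can be driven to zero only because $\tilde u_j\to 0$ uniformly. Your proposal contains neither the renormalization nor the commutator estimate; without them, the truncation you gesture at corrupts $D^s_\d u_j$ on all of $\Omega$, not merely on a measure-negligible set.
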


\begin{proof}
First, suppose that $A \in E^{\rm qc}$, then by \cite[Theorem~4.10]{Mul99}, there is a bounded sequence $(v_j)_j \subset W^{1,\infty}_0(\Omega;\R^m)$ such that 
\begin{align}\label{aux83}
\dist(A+\nabla v_j,E) \to 0 \quad \text{in measure on $\Omega$. }
\end{align}
We may assume without loss of generality that $v_j \to 0$ in $L^{\infty}(\Omega;\R^m)$ and hence, also $\nabla v_j \starto 0$ in $L^{\infty}(\Omega;\Rmn)$; otherwise, we glue together suitably scaled and translated copies of $v_j$ for each $j$. 

  After identifying $v_j$ with its extension to $\R^n$ by zero, we define the sequence $(\tilde u_j)_j$ by 
\begin{align*}
\tilde u_j := \Pcal^s_\d v_j \in H^{s,\infty,\d}(\R^n;\R^m) \quad \text{for $j \in \N$}.
\end{align*} 
Since $D^s_\d \tilde u_j=\nabla v_j \starto 0$ in $L^{\infty}(\R^n;\Rmn)$, and the sequence $(v_j)_j$ is also bounded in $W^{1,p}(\R^n;\R^m)$, it follows along with the weak continuity of $\Pcal_\d^s$ that
$\tilde u_j \weakto 0$
 in $H^{s,p,\d}(\R^n;\R^m)=H^{s,p}(\R^n;\R^m)$ as $j\to \infty$ for all $p \in (1,\infty)$.  
 In addition, the compact embedding of $H^{s,p}(\R^n;\R^m)$ into $L^{\infty}(\Omega_\d;\R^m)$ for $sp >n$ (see~Section~\ref{sec:sobolevspaces}), yields
 \begin{align}\label{aux82}
 \tilde u_j \to 0 \quad\text{ in $L^{\infty}(\Omega_\d;\R^m)$\quad  and \quad $\tilde u_j \starto 0$ in $H^{s,\infty,\d}(\Omega;\R^m)$. }
 \end{align}

We now introduce a sequence of cut-off functions $(\chi_j)_j \subset C_c^{\infty}(\Omega_{-\d};[0,1])$ such that 
\begin{align}\label{aux84}
\abs{\Omega_{-\d} \setminus \{\chi_j=1\}} \to 0 \quad\text{ and}\quad 
\mathrm{Lip}(\chi_j)\norm{\tilde u_j}_{L^{\infty}(\Omega_\d;\R^m)} \to 0  \quad \text{ as $j \to \infty$,} 
\end{align}
where ${\mathrm{Lip}}(\chi_j)$ denotes the Lipschitz constant of $\chi_j$, 
and we define $(u_j)_j$ via
\begin{align*}
u_j:=\chi_j \tilde u_j \in H^{s,\infty,\d}(\Omega;\R^m), 
\end{align*}
which guarantees 
\begin{align}\label{aux81}
u_j =0 \quad \text{ in $\Gamma_{\pm\d}$.}
\end{align}
 Moreover, by the nonlocal Leibniz rule (see \cite[Lemma~2]{CKS23}),
\begin{align}\label{aux85}
D^s_\d u_j = \chi_j D^s_\d \tilde u_j + K_{\chi_j}(\tilde u_j) = \chi_j \nabla v_j +K_{\chi_j}(\tilde u_j),
\end{align}
where $K_{\chi_j}:L^\infty(\Omega_\delta)\to L^\infty(\Omega;\R^n)$ are bounded linear operators that satisfy 
\begin{align}\label{aux86}
\norm{K_{\chi_j}(\tilde u_j)}_{L^{\infty}(\Omega;\Rmn)} \leq C\mathrm{Lip}(\chi_j)\norm{\tilde u_j}_{L^{\infty}(\Omega_\d;\R^m)} \to 0 \ \text{ as $j \to \infty$};
\end{align}
the last convergence follows from~\eqref{aux84}.
Since $\dist(A+\chi_j \nabla v_j,E) \to 0$ in measure on $\Omega_{-\d}$ due to~\eqref{aux83} and the first convergence in~\eqref{aux84}, we conclude along with~\eqref{aux85} and~\eqref{aux86} that 
\begin{align*}
\dist(A+D^s_\d u_j,E) \to 0 \quad \text{ in measure on $\Omega_{-\d}$. }
\end{align*} 
Moreover, as \cite[Lemma~3]{CKS23} yields convergence $D_\delta^s u_j \to 0$ in $L^{\infty}$ in any compactly contained subset of the collar $\Gamma_{-\d}$, we have that 
\[
A+D^s_\d u_j \to A \quad \text{in measure on $\Gamma_{-\d}$ as $j \to \infty$}.
\]
Hence, we obtain the desired approximate solution to \eqref{nonlocinclusion_with}, after adding the linear function $l_A$ to $(u_j)_j$. \smallskip

\color{black}
Conversely, if $(u_j)_j \subset H^{s,\infty,\d}(\Omega;\R^m)$ is a sequence satisfying \eqref{eq:nonrigidinclusion}, we set $v_j:=\Qcal^s_\d u_j$ for all $j\in \N$ to find that $(v_j)_j \subset W^{1,\infty}(\Omega;\R^m)$ is a sequence with $v_j=l_{A}$ on $\partial \Omega$ for all $j \in \N$ in the sense of traces, and
\[
\begin{cases}
\dist(\nabla v_j, E)=\dist(D^s_\d u_j,E) \to 0 & \text{in measure on $\Omega_{-\d}$ as $j \to \infty$},\\
\nabla v_j=D^s_\d u_j \to A &\text{in measure on $\Gamma_{-\d}$ as $j \to \infty$}.
\end{cases}
\]
A small adaptation to the argument in \cite[Theorem~4.10\,(i)]{Mul99} now shows that $A \in E^{\rm qc}$, as desired. 
\end{proof}
\section{Well-posedness and localization of nonlocal Neumann-type problems}\label{sec:neumann}

 This section is concerned with the analysis of nonlocal differential equations with homogeneous Neumann-type boundary conditions. In fact, it even covers a more general setting with natural boundary conditions.
Our main results are the well-posedness for these problems for any fixed fractional parameter $s \in (0,1)$ 
 and a rigorous proof of localisation, i.e., the convergence to the classical analogues of these boundary-value problems as the fractional parameter $s$ goes to $1$.

We approach these problems from the variational perspective, where the objects of interest are the associated energy functionals: For $\Omega\subset \R^n$ a bounded Lipschitz domain and $p\in (1, \infty)$, consider
$\Fcal_\d^s:H^{s,p, \d}(\Omega;\R^m)\to \R_\infty$ given by
\begin{align}\label{functional_Fds}
\Fcal_\d^s(u)=\int_{\Omega} f(x,D^s_\d u)\,dx-\int_{\Omega_\d}F \cdot u\,dx,
\end{align}\color{black}
where $F \in L^{p'}(\Omega_\d;\R^m)$ with $p'$ the dual exponent of $p$ and the Carath\'{e}odory function $f:\Omega \times \Rmn \to \R_{\infty}$ are suitably given.

Due to the absence of any constraints in the space of admissible functions $\Hspd(\Omega)$, the minimization of $\Fcal_\delta^s$  gives rise to natural boundary conditions when passing to the Euler-Lagrange equations. Nonlocal variational problems on complementary-value spaces, in contrast, lead to Dirichlet boundary-value problems, see  e.g.,~\cite[Section~8]{BCM23}.

\smallskip

\color{black}

\subsection{Existence theory for a class of nonlocal Neumann-type variational problems} \label{sec:wellposed}

In this section we prove the existence of minimizers of the functional in \eqref{functional_Fds}, on a suitable subspace of $H^{s,p,\d}(\O)$ where the Poincar\'{e}-Wirtinger inequality from Section~\ref{sec:poincwirt} can be applied. Precisely, recalling the metric projection $\pi^s_\d:L^p(\O_\d;\R^m) \to \Nzerom$ from Section~\ref{sec:poincwirt} (extended to vector-valued functions), we introduce the sets 
\[
{\Nzerom}^{\perp} = \{u \in H^{s,p,\d}(\Omega;\R^m)\,:\, \pro(u)=0\}.
\]
For $p=2$, this corresponds to the orthogonal complement of $N^{s,2,\d}(\O;\R^m)$ in $L^2$, whereas for the case $p\not =2$, it need not be a linear subspace, given the nonlinearity of the metric projection.

We now present the main result of this section, which establishes the existence of minimizers for $\Fcal^s_\d$ on the subspaces ${\Nzerom}^{\perp}$. 

\begin{theorem}[Existence of minimizers for $\Fcal_\d^s$]
\label{th:wellposed} 
Let $p \in (1,\infty)$, $F \in L^{p'}(\Omega_\d;\R^m)$ and $f:\Omega \times \Rmn \to \R_{\infty}:=\R \cup \{\infty\}$ be a Carath\'{e}odory integrand such that 
\[
f(x,A) \geq c\,\bigl(\abs{A}^p-1\bigr) \quad \text{for a.e.~$x \in \Omega$ and all $A \in \Rmn$}
\]
with a constant $c>0$. If $v \mapsto \int_\Omega f(x,\nabla v)\,dx$ is weakly lower semicontinuous on $W^{1,p}(\Omega;\R^m)$, then the functional $\Fcal_\d^s$ in~\eqref{functional_Fds}, i.e.,
\begin{align*}
\Fcal_\delta^s(u) = \int_{\Omega} f(x,D^s_\d u)\,dx-\int_{\Omega_\d}F \cdot u\,dx, 
\end{align*}
admits a minimizer over ${\Nzerom}^{\perp}$.
\end{theorem}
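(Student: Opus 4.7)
I would use the direct method of the calculus of variations. The key ingredients are the nonlocal Poincar\'e--Wirtinger inequality (Lemma~\ref{cor:poincareN}), the associated compactness statement (Lemma~\ref{le:rellichprojection}), and the translation operator $\Qcal^s_\d$ from Lemma~\ref{le:translation1}, which turns the nonlocal gradient into a classical one. First, for every $u \in \Nzerom^{\perp}$ the identity $\pro(u)=0$ combined with Lemma~\ref{cor:poincareN} yields $\|u\|_{L^p(\O_\d;\R^m)} \leq C\|D^s_\d u\|_{L^p(\O;\Rmn)}$. Together with the $p$-growth of $f$ from below and standard H\"older/Young arguments to absorb the linear term involving $F$, this shows that $\Fcal^s_\d$ is bounded below on $\Nzerom^{\perp}$ and that any minimizing sequence $(u_j)_j \subset \Nzerom^{\perp}$ is bounded in $H^{s,p,\d}(\O;\R^m)$.

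By reflexivity of $H^{s,p,\d}(\O;\R^m)$ (it is a closed subspace of $L^p(\O_\d;\R^m)\times L^p(\O;\Rmn)$ via $u\mapsto (u,D^s_\d u)$), we may extract a non-relabeled subsequence with $u_j \weakto u$ in $H^{s,p,\d}(\O;\R^m)$. Applying Lemma~\ref{le:rellichprojection} to $(u_j)_j$ and using $\pro(u_j)=0$ gives $u_j \to u-\pro(u)$ in $L^p(\O_\d;\R^m)$. Since also $u_j \weakto u$ in $L^p(\O_\d;\R^m)$, uniqueness of the weak limit forces $\pro(u)=0$, so that $u \in \Nzerom^{\perp}$, and simultaneously $u_j \to u$ strongly in $L^p(\O_\d;\R^m)$.

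To conclude, set $v_j:=\Qcal^s_\d u_j$ and $v:=\Qcal^s_\d u$. The operator $\Qcal^s_\d$ is bounded from $L^p(\O_\d;\R^m)$ into $L^p(\O;\R^m)$ by Young's convolution inequality, so $v_j \weakto v$ in $L^p(\O;\R^m)$; combining this with $\nabla v_j = D^s_\d u_j \weakto D^s_\d u = \nabla v$ in $L^p(\O;\Rmn)$ (via Lemma~\ref{le:translation1}) yields $v_j \weakto v$ in $W^{1,p}(\O;\R^m)$. The assumed weak lower semicontinuity then transfers directly to
\begin{align*}
\int_\O f(x,D^s_\d u)\,dx \leq \liminf_{j\to\infty} \int_\O f(x,D^s_\d u_j)\,dx,
\end{align*}
while the linear term passes to the limit by strong $L^p$-convergence of $u_j$, yielding $\Fcal^s_\d(u) \leq \liminf_{j\to\infty} \Fcal^s_\d(u_j)$ and hence the desired minimizer. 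The main subtlety lies in the identification step ensuring that the weak limit $u$ again belongs to $\Nzerom^{\perp}$: this relies crucially on Lemma~\ref{le:rellichprojection} together with the continuity (but generally nonlinearity) of $\pro$ on $L^p(\O_\d;\R^m)$. The use of the translation mechanism is what then allows weak lower semicontinuity to be imported from the classical Sobolev setting without any further nonlocal argument.
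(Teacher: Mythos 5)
Your proof is correct and follows essentially the same route as the paper: coercivity on $\Nzerom^{\perp}$ via the nonlocal Poincar\'e--Wirtinger inequality, weak closedness of $\Nzerom^{\perp}$ via Lemma~\ref{le:rellichprojection}, and weak lower semicontinuity imported from the classical setting through $\Qcal^s_\d$. You merely spell out in more detail the weak-closedness and limit-identification steps that the paper states succinctly.
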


\begin{proof}
We apply the direct method in the calculus of variations. Note first that ${\Nzerom}^\perp$ is a weakly closed subset of $H^{s, p, \delta}(\Omega;\R^m)$ as a consequence of Lemma~\ref{le:rellichprojection}. The coercivity then follows from the lower bound on $f$ along with the Poincar\'e-Wirtinger inequality from Corollary~\ref{cor:poincareN}, which reduces to
\[
\norm{u}_{L^p(\Omega_\d;\R^m)} \leq C \norm{D^s_\d u}_{L^p(\Omega;\Rmn)},
\]
for $u \in {\Nzerom}^{\perp}$. 
For the weak lower semicontinuity of $\Fcal_\delta^s$, 
we observe that if $u_j \weakto u$ in $H^{s,p,\d}(\Omega;\R^m)$, then $v_j:=\Qcal^s_\d u_j \weakto \Qcal^s_\d u=:v$ in $W^{1,p}(\Omega;\R^m)$ with $\nabla v_j = D^s_\d u_j$ for all $j$ and $\nabla v = D^s_\d u$, cf.~Lemma~\ref{le:translation1}. Hence,
\begin{align*}
\Fcal_\delta^s(u)&=\int_{\Omega} f(x,\nabla v)\,dx - \int_{\Omega_\d} F \cdot u\,dx\\
&\leq \liminf_{j \to \infty} \int_{\Omega} f(x,\nabla v_j)\,dx - \int_{\Omega_\d} F \cdot u_j\,dx = \liminf_{j \to \infty}\Fcal_\delta^s(u_j),
\end{align*}
showing that $\Fcal_\delta^s$ is weakly lower semicontinuous on $H^{s, p, \d}(\Omega;\R^m)$. 
In combination with the coercivity, this yields the desired existence of a minimizer of $\Fcal_\delta^s$ in ${\Nzerom}^\perp$. 
\end{proof}

\begin{remark}\label{rem:wellposed}
a) For the sake of generality, the previous theorem assumes that the classical integral functional (with standard gradients) associated to $\Fcal_\delta^s$ is weakly lower semicontinuous. Well-known sufficient conditions for this include polyconvexity of the integrand $f$ in the second argument or quasiconvexity of the latter 
 along with a suitable upper bound, see e.g.,~\cite[Theorems~8.11 and 8.31]{Dacorogna}.\smallskip 
 
b) Note that if $F\in L^{p'}(\Omega_\d;\R^m)$ satisfies the compatibility condition
\begin{align}\label{conditioncompatibility}
\int_{\Omega_\d} F \cdot h\,dx = 0 \quad \text{for all $h \in \Nzerom$},
\end{align}
then $\Fcal_\delta^s$ is invariant under translations in $\Nzerom$. As a consequence of Theorem~\ref{th:wellposed}, $\Fcal_\delta^s$ then admits minimizers over the whole space $H^{s,p,\d}(\Omega;\R^m)$.
\end{remark}

As a consequence of Theorem~\ref{rem:wellposed}, and specifically Remark~\ref{rem:wellposed}, one can infer, by passing to Euler-Lagrange equations, the existence of  weak solutions for a class of nonlocal differential equations with natural boundary conditions. Namely, suppose that $F$ satisfies~\eqref{conditioncompatibility} and let $f$ be continuously differentiable in its second argument and $C>0$ such that
\begin{align}\label{eq:growth}
\abs{f(x,A)} \leq C(1+\abs{A}^p) \quad \text{and} \quad \abs{D_A f(x,A)} \leq C(1+\abs{A}^{p-1}) \quad \text{for all $(x,A) \in \Omega_\d \times \Rmn$},
\end{align}
with $D_A f$ the differential of $f$ with respect to its second argument. Then, using a standard argument, see~\cite[Theorem~3.37]{Dacorogna}, we find that the minimizers $u \in H^{s,p,\d}(\Omega;\R^m)$ of $\Fcal_\d^s$ solve the  weak Euler-Lagrange equation
\begin{align}\label{eq:weakel}
\int_{\Omega} D_A f(x,D^s_\d u) \cdot D^s_\d v\,dx =\int_{\Omega_\d} F \cdot v\,dx \quad \text{for all $ v \in H^{s,p,\d}(\Omega;\R^m)$}.
\end{align}
We note that the compatibility condition in \eqref{conditioncompatibility} is also necessary for \eqref{eq:weakel} to hold, since the left hand-side is zero for $v \in \Nzerom$. Moreover, by the definition of the weak nonlocal divergence via nonlocal integration by parts, the equation \eqref{eq:weakel} corresponds to the weak formulation of
\begin{align}\label{ELequation}
-\Div^s_\d \bigl( \mathbbm{1}_{\Omega} D_A f(\cdot,D^s_\d u)\bigr) = F \quad \text{in $\Omega_\d$}. 
\end{align}

Within the region $\Omega_{-\d}$, this equation 
reduces to the nonlocal Euler-Lagrange equation from \cite[Theorem~8.2]{BCM23}, while in the double boundary layer $\Gamma_{\pm\d}$, the equation takes into account the geometry of the boundary $\partial \Omega$. More precisely, one obtains
\begin{align}\label{ELequation_split}
\begin{cases}
-\Div^s_\d (D_A f(x,D^s_\d u)) = F \quad  &\text{in $\Omega_{-\d}$},\\
\Ncal_{\d}^s (D_Af(x, D_\delta^s u)) =F \quad & \text{in $\Gamma_{\pm \d}$,}
\end{cases}
\end{align}
where $\Ncal_{\d}^{s}:=-\Div^s_\d (\mathbbm{1}_{\Omega} \,\cdot\,)$ coincides with the nonlocal boundary operator,  recently introduced in~\cite[Definition~3.1]{BCFR23} to prove a concise nonlocal integration by parts formula.

Now, if $u$ solves~\eqref{ELequation} or~\eqref{ELequation_split} (weakly), the nonlocal divergence imposes that $\mathbbm{1}_{\Omega} D_A f(\cdot,D^s_\d u)$ must be regular enough across $\partial \Omega$. As $s \uparrow 1$, we expect to recover the natural boundary conditions $D_A f(\cdot,\nabla u)\cdot \nu=0$ on $\partial \Omega$ with $\nu$ an outer normal to $\partial\Omega$. This intuition is made rigorous in the next section.

\subsection{Localization for $s\uparrow 1$}\label{sec:localisation}

We now turn to studying the limiting behavior of the nonlocal variational problem from Theorem~\ref{th:wellposed}, and the closely related nonlocal Neumann-type problems, as the fractional parameter $s$ tends to $1$. Our main result in this section (see Theorem~\ref{th:local}) rigorously confirms the expectation that these problems localize, that is, they converge to their classical counterparts with usual gradients. 

To start, let us collect in the next lemma some preparatory tools revolving around the asymptotic behavior of the sets $\Nzero$ and $N^{s, p,\delta}(\Omega)^\perp$ as $s$ tends to $1$. To capture the limit objects, we introduce $H^{1,p,\d}(\Omega):=\{ u \in L^p(\Omega_\d)\,:\, u|_{\Omega} \in W^{1,p}(\Omega)\}$ and 
\begin{align}\label{N1}
N^{1, p, \delta}(\Omega) = \{u \in H^{1,p,\d}(\Omega)\,:\, \nabla u = 0 \ \text{in $\Omega$}\} = \{u\in L^p(\Omega_\d): u|_{\Omega} \ \text{is constant}\}.
\end{align}
along with its corresponding metric projection $\pi^{1}_\delta:L^p(\Omega_\delta)\to N^{1, p, \delta}(\Omega)$,  
and we also set 
\[
{N^{1, p, \delta}(\Omega)}^{\perp} := \{u \in H^{1,p,\d}(\Omega;\R^m)\,:\, \pi_\delta^1(u)=0\}.
\]
Given the definition~in~\eqref{N1}, the projection $\pi_\delta^1(u)$ agrees with $u$ in $\Gamma_\d$ and is constant on $\Omega$. Considering that $\argmin_{c \in \R} \norm{u-c}_{L^p(\Omega)} =0$ is equivalent to $\int_{\Omega} |u|^{p-1}\sign(u)\, dx=0$ for any $u\in L^p(\Omega_\delta)$, 
one can represent $N^{1, p, \delta}(\Omega)^\perp$ as
\begin{align}\label{N1perp}
N^{1,p, \delta}(\Omega)^{\perp} 
 =\left\{u\in L^p(\Omega_\delta): u|_\Omega\in W^{1,p}(\Omega), \ u=0 \text{ a.e. in $\Gamma_\d$,  $\int_\Omega |u|^{p-1}\sign(u)\,  dx=0$}\right\}. 
\end{align}
When $p=2$, the nonlinear integral condition in~\eqref{N1perp} reduces simply to the requirement of zero mean value.  
\color{black}
\begin{lemma}\label{le:asymptotics}
Let $p \in (1,\infty)$ and let $(s_j)_j \subset (0,1)$ be a sequence converging to $1$. Then, these statements hold:
\begin{itemize}
\item[$(i)$] For all $v\in W^{1,p}(\R^n)$ it holds that $\Pcal^{s_j}_\d v \to v$ in $L^p(\Omega_\d)$ as $j\to \infty$. \\[-1.5ex]

\item[$(ii)$] If $(u_j)_{j} \subset L^p(\Omega_\d)$ converges to $u \in L^p(\Omega_\d)$, then $\pi_{\delta}^{s_j}(u_j) \to \pi_\delta^1(u)$ as $j \to \infty$. \\[-1.5ex]

\item[$(iii)$] Let $(u_j)_{j} \subset L^p(\Omega_\d)$ with $u_j\in N^{s_j, p, \delta}(\Omega)^\perp$ for all $j$. If $\sup_{j }\norm{D^{s_j}_\d u_j}_{L^p(\Omega;\R^n)} < \infty$, then there is a $u \in N^{1,p,\delta}(\Omega)^\perp$ such that (up to a non-relabeled subsequence)
\[
u_j \to u\ \text{in $L^p(\Omega_\d)$} \quad \text{and} \quad D^{s_j}_\d u_j \weakto \nabla u \ \text{in $L^p(\Omega;\R^n)$ as $j \to \infty$.}
\]
\end{itemize}
\end{lemma}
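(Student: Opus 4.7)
For $(i)$, I would combine the density of $C_c^\infty(\R^n)$ in $W^{1,p}(\R^n)$ with the uniform operator bound in \eqref{eq:uniformpbound} to reduce the claim to $v \in C_c^\infty(\R^n)$. For such $v$, the Fourier identity $\widehat{\Pcal^{s_j}_\d v - v}(\xi) = \bigl(1/\widehat{Q^{s_j}_\d}(\xi) - 1\bigr)\widehat{v}(\xi)$ together with the pointwise limit $\widehat{Q^{s_j}_\d}(\xi) \to 1$ (reflecting the concentration $Q^{s_j}_\d \to \d_0$ as finite measures, since $\|Q^{s_j}_\d\|_{L^1}=1$, $\supp Q^{s_j}_\d \subset B_\d(0)$, and $c_{n,s_j,\d} \to 0$) would then yield the conclusion via an $L^p$-multiplier argument that exploits the uniform decay information in \eqref{eq:decayR}.

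For $(ii)$, my approach is a Mosco-type scheme combined with the Radon--Riesz property of the uniformly convex space $L^p(\Omega_\d)$. Using $0 \in N^{s_j,p,\d}(\Omega)$ gives $\|\pi_\d^{s_j}(u_j)\|_{L^p(\Omega_\d)} \leq 2\|u_j\|_{L^p(\Omega_\d)}$, so along a subsequence $\pi_\d^{s_j}(u_j) \weakto h^*$ weakly; testing the constancy of $\Qcal^{s_j}_\d \pi_\d^{s_j}(u_j)$ on $\Omega$ against $\phi \in C_c^\infty(\Omega)$ (using $Q^{s_j}_\d \ast \phi \to \phi$ strongly in $L^{p'}$) shows $h^*|_\Omega$ is constant, hence $h^* \in N^{1,p,\d}(\Omega)$. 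The crux is then to identify $h^* = \pi_\d^1(u)$, for which I would construct for each $h = c\mathbbm{1}_\Omega + g\mathbbm{1}_{\Gamma_\d} \in N^{1,p,\d}(\Omega)$ a recovery sequence $h_j \in N^{s_j,p,\d}(\Omega)$ with $h_j \to h$ strongly in $L^p(\Omega_\d)$: approximate $h$ by functions $v_\e \in C_c^\infty(\R^n)$ equal to the constant $c$ on $\Omega$ and smoothly transitioning across $\partial\Omega$ into $\Gamma_\d$ toward a mollification of $g$ (matching $c$ at the boundary), so that $\Pcal^{s_j}_\d v_\e|_{\Omega_\d} \in N^{s_j,p,\d}(\Omega)$ by Proposition~\ref{prop:extraregularity}, and then extract a diagonal subsequence using $(i)$. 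Passing to the limit in the minimality inequality $\|u_j - \pi_\d^{s_j}(u_j)\|_{L^p} \leq \|u_j - h_j\|_{L^p}$ with weak lower semicontinuity on the left and the strong convergences $u_j \to u$, $h_j \to h$ on the right gives $\|u - h^*\|_{L^p} \leq \|u - h\|_{L^p}$ for every $h \in N^{1,p,\d}(\Omega)$; uniqueness of the best approximation forces $h^* = \pi_\d^1(u)$, and the resulting norm convergence together with Radon--Riesz upgrades weak to strong convergence.

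For $(iii)$, I would combine the uniform Poincar\'e--Wirtinger inequality with a compact extension argument and treat $(ii)$ as a black box. Corollary~\ref{cor:poincareN} with a constant uniform in $s_j$ (by Remark~\ref{rem:quotientnorm} and \eqref{eq:uniformpbound}) yields $\|u_j\|_{L^p(\Omega_\d)} \leq C\|D^{s_j}_\d u_j\|_{L^p(\Omega;\R^n)}$, so along a subsequence $u_j \weakto u$ in $L^p(\Omega_\d)$. Setting $v_j := \Qcal^{s_j}_\d u_j$, Young's inequality and Lemma~\ref{le:translation1} bound $v_j$ in $W^{1,p}(\Omega)$; Rellich--Kondrachov and testing against $\phi \in C_c^\infty(\Omega)$ (with $Q^{s_j}_\d \ast \phi \to \phi$) identify the $L^p(\Omega)$-strong limit as $u|_\Omega$, giving simultaneously $u|_\Omega \in W^{1,p}(\Omega)$ and $D^{s_j}_\d u_j = \nabla v_j \weakto \nabla u$ in $L^p(\Omega;\R^n)$. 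The main obstacle is upgrading to strong convergence of $u_j$ in $L^p(\Omega_\d)$: I would introduce the extension $U_j := \Ecal^{s_j}_\d u_j$ from \eqref{extensionmodulo}, which is uniformly bounded in $H^{s_j,p}(\R^n)$ by \eqref{eq:uniformpbound} and thus precompact in $L^p(\Omega_\d)$ via \eqref{eq:increasingnorm} and the compact embedding $H^{s_0,p}(\R^n) \embed L^p_{\mathrm{loc}}(\R^n)$ for any fixed $s_0 \in (0,\inf_j s_j)$. With $U_j \to U$ strongly along a subsequence, the key observation is that $u_j - U_j \in N^{s_j,p,\d}(\Omega)$ together with $\pi_\d^{s_j}(u_j) = 0$ and \eqref{aux:pides} gives $\pi_\d^{s_j}(U_j) = -(u_j - U_j)$, hence $u_j = U_j - \pi_\d^{s_j}(U_j)$; applying $(ii)$ to the strongly convergent sequence $U_j \to U$ then delivers $u_j \to U - \pi_\d^1(U) = u$ strongly in $L^p(\Omega_\d)$, and $\pi_\d^1(u) = \pi_\d^1(U - \pi_\d^1(U)) = 0$ via another use of \eqref{aux:pides} places $u \in N^{1,p,\d}(\Omega)^\perp$.
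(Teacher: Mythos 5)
Your arguments for parts $(ii)$ and $(iii)$ are essentially the paper's own: the same weak-compactness/recovery-sequence/Radon--Riesz scheme for $(ii)$ (the paper only builds the recovery sequence for the single element $\pi^1_\d(u)$ and chains the inequalities, whereas you build one for every $h\in N^{1,p,\d}(\Omega)$ -- both work), and the same identity $u_j=\Ecal^{s_j}_\d u_j-\pi^{s_j}_\d(\Ecal^{s_j}_\d u_j)$ combined with $(ii)$ for part $(iii)$ (your detour through $v_j=\Qcal^{s_j}_\d u_j$ to identify $\nabla u$ replaces the paper's citation of a distributional argument from~\cite{CKS23}). Part $(i)$ is where you genuinely diverge: the paper extracts a strongly $L^p$-convergent subsequence of $(\Pcal^{s_j}_\d v)_j$ from the uniform $H^{\bar s,p}$-bound and identifies the limit by duality against test functions using $Q^{s_j}_\d*\varphi\to\varphi$, whereas you reduce to $v\in C_c^\infty(\R^n)$ by density and \eqref{eq:uniformpbound} and then argue on the Fourier side. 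Your route is viable and, on the bounded set $\Omega_\d$, can be closed more cheaply than the ``$L^p$-multiplier argument'' you gesture at: writing $\Pcal^{s_j}_\d v-v=\bigl((1/\widehat{Q^{s_j}_\d}-1)\widehat v\bigr)^\vee$, the uniform bound $1/\widehat{Q^{s_j}_\d}\leq C\abss{\cdot}^{1-s_j}\leq C\abss{\cdot}$ together with the pointwise convergence $\widehat{Q^{s_j}_\d}\to 1$ (which does hold, since $c_{n,s_j,\d}\to 0$ forces the unit $L^1$-mass of $Q^{s_j}_\d$ to concentrate at the origin) gives $\norm{(1/\widehat{Q^{s_j}_\d}-1)\widehat v}_{L^1(\R^n)}\to 0$ by dominated convergence, hence uniform convergence of $\Pcal^{s_j}_\d v\to v$ and a fortiori convergence in $L^p(\Omega_\d)$; this avoids any multiplier theorem. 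The trade-off is that the paper's compactness argument needs no pointwise information about $\widehat{Q^{s}_\d}$ beyond what is already used for \eqref{eq:uniformpbound}, while yours, once the $L^1$-shortcut above is adopted, is arguably more elementary for smooth $v$.
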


\begin{proof}
\textit{ Part $(i)$.} Let $v\in W^{1, p}(\R^n)$. In light of \eqref{eq:increasingnorm} and \eqref{eq:uniformpbound}, we find for $0< \bar{s} \leq \inf_{j} s_j$ that
\[
\sup_{j} \norm{\Pcal^{s_j}_\d v}_{H^{\bar{s},p}(\R^n)} \leq \sup_{j } \norm{\Pcal^{s_j}_\d v}_{H^{s_j,p}(\R^n)} < \infty.
\]
Due to the compact embedding of $H^{\bar{s},p}(\R^n)$ into $L^p(\Omega_{2\d})$ (see~Section~\ref{sec:sobolevspaces}), there is a subsequence (not relabeled) such that $\Pcal^{s_j}_\d v \to w$ in $L^p(\Omega_{2\d})$ for some $w\in L^p(\Omega_{2\delta})$.  To identify $w$, consider an arbitrary test function $\varphi \in C_c^{\infty}(\Omega_\d)$. As shown in~\cite[Eq.~(3.4)]{CKS23}, it holds that $\Qcal^{s_j}_\d \varphi = Q_\delta^{s_j}*\varphi \to \varphi$ uniformly as $j \to \infty$. Together with Fubini's theorem, this implies
\begin{align*}
\int_{\Omega_\d} w\varphi\,dx & = \lim_{j \to \infty} \int_{\Omega_{2\d}} (\Pcal^{s_j}_\d v) \,(Q^{s_j}_\d* \varphi)\,dx = \lim_{j \to \infty} \int_{\Omega_\d} [Q^{s_j}_\d * (\Pcal^{s_j}_\d v)]\,\varphi\,dx \\ &= \lim_{j \to \infty} \int_{\Omega_\d} (\Qcal^{s_j}_\d\Pcal^{s_j}_\d v)\,\varphi\,dx  = \int_{\Omega_\d} v\,\varphi\,dx,
\end{align*}
from which we infer $w=v$ on $\Omega_\d$. \smallskip

\textit{Part $(ii)$.} Since $0 \in \Nzero$ for all $s \in (0,1]$, we deduce from the definition of the metric projection that \begin{center}
$\norm{\pi^{s_j}_\delta(u_j)}_{L^p(\Omega_\d)} \leq 2\norm{u_j}_{L^p(\Omega_\d)}$ \quad for all $j$.
\end{center}  
As $(u_j)_j$ is bounded in $L^p(\Omega_\delta)$, so is $(\pi_\delta^{s_j}(u_j))_j$, and there exists a (non-relabeled) subsequence and a $w\in L^p(\Omega_\d)$ with $\pi_\delta^{s_j}(u_j) \weakto w$ in $L^p(\Omega_\d)$ as $j\to \infty$. For any test function $\psi \in C_c^{\infty}(\Omega;\R^n)$, one then obtains 
\begin{align}\label{eq:divw0}
\int_{\Omega} w \Div \psi\,dx = \lim_{j \to \infty} \int_{\Omega_\d} \pi_\delta^{s_j}(u_j)\, \Div^{s_j}_\d \psi\,dx = 0,
\end{align}
where the first inequality uses $\Div^{s_j}_\d \psi \to \Div \psi$ uniformly on $\Omega_\d$ (see~\cite[Lemma~7]{CKS23}), and the last equality follows from integration by parts and the fact that $\pi_\delta^{s_j}(u_j) \in N^{s_j, p, \delta}(\Omega)$ has zero gradient $D_\delta^{s_j}$ for each $j$. By~\eqref{eq:divw0}, the limit function $w$ is constant on $\Omega$, and hence, $w \in N^{1,p, \delta}(\Omega)$, cf.~\eqref{N1}. It remains to show that $w=\pi_\delta^1(u)$ and that $\pi_\delta^{s_j}(u_j)$ converges even strongly. 

To this aim, we first construct an auxiliary sequence $(h_j)_j\subset L^p(\Omega_\delta)$ with the properties that 
\begin{align}\label{construct_hj}
h_j \in N^{s_j, p, \delta}(\Omega) \text{\ \  for all $j$ \qquad and\qquad}  h_j \to \pi_\delta^1(u) \  \text{ in $L^p(\Omega_\d)$ as $j\to \infty$.}
\end{align} 
Since $\pi_\delta^1(u)$ is constant on $\Omega$, one can find a sequence $(\phi_k)_k \subset C_c^{\infty}(\Omega_\d)$ that approximates $\pi_\delta^1(u)$ strongly in $L^p(\Omega_\delta)$ and satisfies that $\phi_k$ is constant on $\Omega$ for every $k$. Then, $\Pcal_\d^{s_j}\phi_k \in N^{s_j, p, \delta}(\Omega)$ because of
\begin{align*}
D_\delta^{s_j} (\Pcal_\delta^{s_j} \phi_k)= \nabla (\Qcal_\delta^{s_j}\Pcal^{s_j}_\d \phi_k)=\nabla \phi_k=0 \quad \text{ on $\Omega$,}
\end{align*}
and, along with part $(i)$, 
\[
\lim_{k \to \infty} \lim_{j \to \infty} \norm{\Pcal_\delta^{s_j} \phi_k - \pi_\delta^1(u)}_{L^p(\Omega_\d)} = \lim_{k\to \infty}\norm{\varphi_k-\pi_\delta^1(u)}_{L^p(\Omega_\delta)}=0. 
\]
By extracting a suitable diagonal sequence, we obtain a sequence as in~\eqref{construct_hj}. \color{black}

\color{black}
Now, with $(h_j)_j$ and the convergences $u_j \to u$ and $\pi_\delta^{s_j}(u_j) \weakto w$ in $L^p(\Omega_\d)$  at hand, it follows that
\begin{align*}
\norm{u-\pi_\delta^1(u)}_{L^p(\Omega_\d)} &\leq \norm{u-w}_{L^p(\Omega_\d)} \leq \liminf_{j \to \infty} \norm{u_j-\pi_\delta^{s_j}(u_j)}_{L^p(\Omega_\d)} \\
&\leq \limsup_{j \to \infty} \norm{u_j-h_j}_{L^p(\Omega_\d)} = \norm{u-\pi_\delta^1(u)}_{L^p(\Omega_\d)}.
\end{align*}
As the inequalities in the previous lines turn to equalities, we infer $\pi_\delta^{s_j}(u_j) \to w = \pi_\delta^1(u)$ in $L^p(\Omega_\d)$, which finishes the proof of $(ii)$. \smallskip

\color{black}
\textit{Part $(iii)$.} By Corollary~\ref{cor:poincareN}, the sequence $(u_j)_j$ is bounded in $L^p(\Omega_\d)$. Using that the extension operator $\Ecal_\delta^s$ (see~Section~\ref{subsec:extension}) is uniformly bounded with respect to $s$ gives
\[
\sup_{j }\norm{\Ecal^{s_j}_\d u_j}_{H^{\bar{s},p}(\R^n)} \leq \sup_{j } \norm{\Ecal^{s_j}_\d u_j}_{H^{s_j,p}(\R^n)}<\infty,
\] 
with $\bar{s} \in (0,\inf_{j} s_j]$.  By the compact embedding of $H^{\bar{s}, p}(\R^n)$ into $L^p(\Omega_\delta)$, we can extract a subsequence (not relabeled) and find a $w\in L^p(\Omega_\d)$ such that $\Ecal^{s_j}_\d u_j \to w$ in $L^p(\Omega_\d)$. 
A distributional argument in analogy to~\cite[Lemma~9]{CKS23} allows us to deduce that $w|_\Omega\in W^{1,p}(\Omega)$, or equivalently, $w\in H^{1,p, \delta}(\Omega)$, and
 \begin{align}\label{eq287}
 D^{s_j}_\d u_j  = D^{s_j}_\d \Ecal^{s_j}_\d u_j \weakto \nabla w \qquad \text{ in $L^p(\Omega;\R^n)$ as $j\to \infty$. }
 \end{align}
Part $(ii)$ shows on the other hand that $\pi_\delta^{s_j}(\Ecal^{s_j}_\d u_j ) \to \pi_\delta^1(w)$ in $L^p(\Omega_\d)$ as $j\to \infty$. Hence,
 \begin{align}\label{eq654}
 u_j =  \Ecal_\delta^{s_j}u_j + (u_j -\Ecal_\delta^{s_j}u_j) = \Ecal_\delta^{s_j} u_j - \pi_\delta^{s_j}(\Ecal^{s_j}_\d u_j )  \to w-\pi_\delta^1(w) \quad  \text{in $L^p(\Omega)$ as $j\to \infty$;}
 \end{align}
note that the second equality is a consequence of  $u_j-\Ecal_\delta^{s_j}u_j\in N^{s_j, p, \delta}(\Omega)$, equation~\eqref{aux:pides}, and $u_j\in N^{s_j, p, \delta}(\Omega)^\perp$, which imply $\pi_\delta^{s_j}(\Ecal_\delta^{s_j}u_j) - \Ecal_{\delta}^{s_j}u_j + u_j = \pi_\delta^{s_j}(u_j)=0$.

 Finally, the statement follows from~\eqref{eq287} and~\eqref{eq654} with $u := w-\pi_\delta^1(w)\in N^{1,p, \delta}(\Omega)^\perp$, and the observation that $\pi_\delta^1(w)\in N^{1, p, \delta}(\Omega)$ is constant in $\Omega$. 
\end{proof}

We can now state and prove our localisation result in terms of variational convergence for $s\uparrow 1$.  Using the framework of $\Gamma$-convergence (see e.g.,~\cite{DalMaso, Braides}) guarantees the convergence of minimizers as a particular consequence.

\begin{theorem}[$\Gamma$-convergence to classical variational integral]\label{th:local}
Let $p \in (1,\infty)$, $F \in L^{p'}(\Omega_\d;\R^m)$ and $f:\Omega \times \Rmn \to \R_{\infty}$ be a Carath\'{e}odory integrand such that
\begin{align}\label{lowerbound2}
f(x,A) \geq c\,\bigl(\abs{A}^p-1\bigr) \quad \text{for a.e.~$x \in \Omega$ and all $A \in \Rmn$}
\end{align}
with a constant $c>0$. If $v \mapsto \int_\Omega f(x,\nabla v)\,dx$ is weakly lower semicontinuous on $W^{1,p}(\Omega;\R^m)$, then the  family of functionals $(\Fcal_\delta^s)_s$ with $\Fcal_\delta^s:L^p(\Omega_\d;\R^m) \to \R_{\infty}$ defined by
\[
\Fcal_\delta^s(u) = \begin{cases}
\displaystyle \int_{\Omega} f(x,D^s_\d u)\,dx-\int_{\Omega_\d}F \cdot u\,dx &\text{for $u \in {\Nzerom}^{\perp}$},\\
\infty &\text{else},
\end{cases}
\]
$\Gamma$-converge with respect to $L^p(\Omega_\d;\R^m)$-convergence as $s \to 1$ to $\Fcal_\delta^1:L^p(\Omega_\d;\R^m) \to \R_\infty$ given by 
\[
\Fcal^1_\delta(u) = \begin{cases}
\displaystyle \int_{\Omega}f(x,\nabla u)\,dx -\int_{\Omega} F\cdot u\,dx &\text{for $u \in N^{1,p, \delta}(\Omega;\R^m)^{\perp}$,}\\
\infty  &\text{else},
\end{cases}
\]
with $N^{1,p, \delta}(\Omega;\R^m)^\perp$ as in~\eqref{N1perp}. In addition, the family $(\Fcal_\delta^s)_s$ is equi-coercive in $L^p(\Omega_\d;\R^m)$.
\end{theorem}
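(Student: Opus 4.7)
My plan is to establish equi-coercivity separately, and then to verify the $\Gliminf$ and $\Glimsup$ inequalities for $\Gamma$-convergence. The main technical inputs are the translation operators of Section~\ref{subsec:translation}, the uniform-in-$s$ Poincar\'e-Wirtinger inequality in Lemma~\ref{cor:poincareN}, and the three parts of Lemma~\ref{le:asymptotics}. For equi-coercivity, suppose $\sup_s \Fcal_\d^s(u_s) < \infty$. Then each $u_s$ lies in $N^{s,p,\d}(\Omega;\R^m)^{\perp}$, and combining the growth condition~\eqref{lowerbound2} with Lemma~\ref{cor:poincareN} (whose constant is independent of $s$ by virtue of \eqref{eq:uniformpbound}) produces a uniform bound on $\norm{D_\d^s u_s}_{L^p(\Omega;\R^{m\times n})}$. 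Lemma~\ref{le:asymptotics}(iii) then supplies a subsequence converging in $L^p(\Omega_\d;\R^m)$ to a limit in $N^{1,p,\d}(\Omega;\R^m)^{\perp}$.

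For the $\Gliminf$ inequality, let $u_s \to u$ in $L^p(\Omega_\d;\R^m)$ with $\liminf_s \Fcal_\d^s(u_s)$ finite. The coercivity argument forces $u_s \in N^{s,p,\d}(\Omega;\R^m)^{\perp}$ with $\norm{D_\d^s u_s}_{L^p}$ bounded along a subsequence realizing the liminf, so Lemma~\ref{le:asymptotics}(iii) yields $u \in N^{1,p,\d}(\Omega;\R^m)^{\perp}$ (in particular $u=0$ a.e.\ on $\Gamma_\d$) and $D_\d^s u_s \weakto \nabla u$ in $L^p(\Omega;\R^{m\times n})$. Setting $v_s := \Qcal_\d^s u_s \in W^{1,p}(\Omega;\R^m)$, Lemma~\ref{le:translation1} gives $\nabla v_s = D_\d^s u_s$, while Young's inequality and $\norm{Q_\d^s}_{L^1(\R^n)} = 1$ bound $(v_s)$ in $W^{1,p}(\Omega;\R^m)$. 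Testing against $\phi \in C_c^{\infty}(\Omega;\R^m)$ via Fubini and using the uniform convergence $Q_\d^s \ast \phi \to \phi$ (as already invoked in the proof of Lemma~\ref{le:asymptotics}(ii)), I identify the weak $W^{1,p}$-limit of $v_s$ as $u|_\Omega$. The assumed weak lower semicontinuity of $v \mapsto \int_\Omega f(x,\nabla v)\,dx$ on $W^{1,p}(\Omega;\R^m)$ then yields
\[
\int_\Omega f(x,\nabla u)\,dx \;\leq\; \liminf_s \int_\Omega f(x, D_\d^s u_s)\,dx,
\]
while the linear term converges trivially since $u=0$ on $\Gamma_\d$.

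For the $\Glimsup$ inequality, given $u \in N^{1,p,\d}(\Omega;\R^m)^{\perp}$, I would fix any $W^{1,p}$-extension $\tilde u \in W^{1,p}(\R^n;\R^m)$ of $u|_\Omega$ and propose the recovery sequence
\[
u_s := \Pcal_\d^s \tilde u - \pi_\d^s\bigl(\Pcal_\d^s \tilde u\bigr) \in N^{s,p,\d}(\Omega;\R^m)^{\perp}.
\]
By Lemma~\ref{le:translation2}, $D_\d^s u_s = \nabla \tilde u|_\Omega = \nabla u$, so the nonlinear term in $\Fcal_\d^s(u_s)$ equals $\int_\Omega f(x,\nabla u)\,dx$ for every $s$, avoiding any actual limit computation. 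Lemma~\ref{le:asymptotics}(i) gives $\Pcal_\d^s \tilde u \to \tilde u$ in $L^p(\Omega_\d;\R^m)$, and Lemma~\ref{le:asymptotics}(ii) then provides $\pi_\d^s(\Pcal_\d^s \tilde u) \to \pi_\d^1(\tilde u)$ in the same norm, so $u_s \to \tilde u - \pi_\d^1(\tilde u)$. The key step is the identity $\tilde u - \pi_\d^1(\tilde u) = u$: since functions in $N^{1,p,\d}(\Omega;\R^m)$ are unrestricted on $\Gamma_\d$ and constant on $\Omega$, the projection $\pi_\d^1(\tilde u)$ matches $\tilde u$ exactly on $\Gamma_\d$ and equals on $\Omega$ the constant $c_{\ast}$ minimizing $c \mapsto \norm{u|_\Omega - c}_{L^p(\Omega)}$; the representation~\eqref{N1perp} of $N^{1,p,\d}(\Omega;\R^m)^{\perp}$ is precisely the optimality condition $c_{\ast}=0$. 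Convergence of the linear term then follows automatically, giving $\Fcal_\d^s(u_s) \to \Fcal_\d^1(u)$. I expect this last identification, especially for $p \neq 2$ where $\pi_\d^1$ is nonlinear, to be the main subtlety of the proof, as it hinges on the precise characterization~\eqref{N1perp} and on the general fact that metric projections onto a closed convex set satisfy $\pi(v - \pi(v)) = 0$.
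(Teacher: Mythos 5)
Your proposal is correct and follows essentially the same route as the paper: equi-coercivity via the uniform-in-$s$ Poincar\'e--Wirtinger inequality and Lemma~\ref{le:asymptotics}\,$(iii)$, the liminf inequality by transferring to $v_s=\Qcal_\d^s u_s$ and invoking the assumed weak lower semicontinuity in $W^{1,p}(\Omega;\R^m)$, and the recovery sequence $u_s=\Pcal_\d^s\tilde u-\pi_\d^s(\Pcal_\d^s\tilde u)$ whose nonlocal gradient equals $\nabla u$ for every $s$. Your identification $\tilde u-\pi_\d^1(\tilde u)=u$ via the splitting of the projection over $\Gamma_\d$ and $\Omega$ and the optimality condition encoded in~\eqref{N1perp} is a correct, slightly more explicit version of the paper's argument that both functions lie in $N^{1,p,\delta}(\Omega;\R^m)^\perp$ and share the same gradient on $\Omega$.
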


\begin{proof} Let $(s_j)_j$ be a sequence in $(0,1)$ that converge to $1$ as $j\to \infty$.\smallskip

\textit{Step 1: Equi-coercivity.}  Let $(u_j)_j\subset L^p(\Omega_\delta)$ with $\sup_j \Fcal_{\delta}^{s_j}(u_j) <\infty$, in particular, $u_j\in N^{s_j, p, \delta}(\Omega)^\perp$ for each $j$. 
The lower bound~\eqref{lowerbound2} together with the nonlocal Poincar\'e inequality in Corollary~\ref{cor:poincareN} with a constant independent of $s$ shows that $(D_\delta^{s_j}u_j)_j$ is bounded in $L^p(\Omega;\R^{m\times n})$. Hence, the compactness result in Lemma~\ref{le:asymptotics}\,$(iii)$ is applicable and immediately yields a subsequence of $(u_j)_j$ that converges strongly in $L^p(\Omega;\R^m)$ to a function in $N^{1, p, \delta}(\Omega)^\perp$. 

\smallskip

\color{black}
\textit{Step 2: Liminf-inequality.} Let $(s_j)_j \subset (0,1)$ and $(u_j)_j\subset L^p(\Omega_\delta)$ be sequences such that $s_j \to 1$, $u_j \to u$ in $L^p(\Omega_\d;\R^m)$ as $j\to \infty$ and assume without loss of generality that $\sup_{j} \Fcal_\delta^{s_j}(u_j) < \infty$. 
Then, according to Lemma~\ref{le:asymptotics}\,$(iii)$ (cf.~also Step~1), $u \in N^{1,p, \delta}(\Omega)^\perp$ with $D^{s_j}_\d u_j \weakto \nabla u$ in $L^p(\Omega;\Rmn)$ as $j\to \infty$.  The desired liminf-inequality
\begin{align*}
\Fcal_\delta^{1}(u) \leq \liminf_{j\to \infty} \Fcal_{\delta}^{s_j}(u_j)
\end{align*}
is straightforward, if we exploit the weak lower semicontinuity of $v \mapsto \int_\Omega f(x,\nabla v)\,dx$ as in the proof of Theorem~\ref{th:wellposed}, but now with $\Qcal^{s_j}_\d$ varying with $j$.

 \smallskip

\textit{Step 3: Recovery sequence.} Let $u \in N^{1,p, \delta}(\Omega)^{\perp}$ with $\Fcal_\delta^1(u)<\infty$ and take $v \in W^{1,p}(\R^n;\R^m)$ with $v=u$ on $\Omega$. We define a sequence $(u_j)_j\subset L^p(\Omega_\delta)$ by setting
\[
u_j := \Pcal^{s_j}_\d v- \pi_\delta^{s_j}(\Pcal^{s_j}_\d v) \in N^{s_j, p, \delta}(\Omega;\R^m)^{\perp}.
\] 
By construction, it holds in view of~\eqref{translation_formula} that, for every $j$,
\begin{align}\label{construct_recovery}
D^{s_j}_\d u_j = D_\delta^{s_j} (\Pcal_\delta^{s_j} v) = \nabla v=\nabla u \quad \text{ on $\Omega$, }
\end{align}
and Lemma~\ref{le:asymptotics}\,$(i)$ and $(ii)$ imply
 \begin{align*}
 u_j \to v-\pi_\delta^1(v) = u \quad\text{ in $L^p(\Omega_\d;\R^m)$ as $j\to \infty$.}
 \end{align*} 
Observe that the identification of the limit function results from the fact that both $u$ and $v-\pi_\delta^1(v)$ lie in $N^{1,p, \delta}(\Omega)^{\perp}$ and they have the same gradient in $\Omega$. 
 
Altogether, we have shown that $u_j \to u$ in $L^p(\Omega_\d;\R^m)$ and 
\begin{align*}
\Fcal^{s_j}_\delta(u_j) = \int_{\Omega}f(x,D^{s_j}_\d u_j)\,dx - \int_{\Omega_\d} F \cdot u_j\,dx = \int_{\Omega}f(x,\nabla u)\,dx - \int_{\Omega_\d} F \cdot u_j\,dx \longrightarrow  \Fcal^1_\delta(u)
\end{align*}
as $j \to \infty$, which proves the stated $\Gamma$-convergence.
\end{proof}
\begin{remark} \label{rem:local1} We point out that the statement of Theorem~\ref{th:local} does not require any growth bound on $f$ from above. This is of particular relevance in settings with polyconvex integrands, which - motivated by applications in elasticity theory - are often chosen to be extended-valued. 
In terms of the proof, the waiver of any upper bound on $f$ is possible by the specific construction of the recovery sequence, whose nonlocal gradients are independent of $j$, see~\eqref{construct_recovery}. 
\end{remark}

Finally, we address what the previously shown convergence of the variational problems implies for the relation between local and nonlocal differential equations subject to natural and Neumann-type boundary conditions.

 Indeed, if the classical compatibility condition $\int_{\Omega} F\,dx = 0$ holds, then any minimizer $u \in L^p(\Omega_\d;\R^m)$ of $\Fcal_\delta^1$, when restricted to $\Omega$, also minimizes the functional
\[
v \mapsto \int_{\Omega}f(x,\nabla v)\,dx -\int_{\Omega} F\cdot v\,dx
\]
over the full space $W^{1,p}(\Omega;\R^m)$. In particular, if $f$ is continuously differentiable in its second argument with $f$ and $D_A f$ satisfying \eqref{eq:growth}, then the minimizer $u$ weakly satisfies the Euler-Lagrange system with natural boundary conditions
\begin{equation}\label{eq:natural}
\begin{cases}
-\Div (D_A f(\cdot,\nabla u)) = F & \text{in $\Omega$},\\
D_A f(\cdot,\nabla u) \cdot \nu = 0 & \text{on $\partial \Omega$}, 
\end{cases}
\end{equation}
where $\nu$ is an outward pointing unit normal to $\partial \Omega$. Therefore, Theorem~\ref{th:local} implies that the minimizers of $\Fcal_s^\d$ converge up to subsequence in $L^p(\Omega;\R^m)$ to a weak solution of \eqref{eq:natural} as $s \uparrow 1$.
\color{black}

\section*{Acknowledgements}
The authors would like to thank the Lorentz Center for their hospitality during the workshop \textit{``Nonlocality: Analysis, Numerics and Applications''}, which has inspired initial ideas for this work.

\end{document}